\numberwithin{equation}{section}
	\def\MR#1{}
\newtheorem{theorem}{Theorem}[section]
\newtheorem{proposition}[theorem]{Proposition}
\newtheorem{corollary}[theorem]{Corollary}
\newtheorem{lemma}[theorem]{Lemma}
\theoremstyle{definition}
\newtheorem{remark}[theorem]{Remark}
\newtheorem{example}[theorem]{Example}
\newtheorem{definition}[theorem]{Definition}
\DeclareMathOperator{\id}{id}
\DeclareMathOperator{\CC}{\mathbb{C}}
\DeclareMathOperator{\Q}{\mathbb{Q}}
\DeclareMathOperator{\R}{\mathbb{R}}
\DeclareMathOperator{\Z}{\mathbb{Z}}
\DeclareMathOperator{\Imag}{Im}
\DeclareMathOperator{\Li}{Li}
\newcommand*{\maxzero}[1]{[#1]_{+}}
\newcommand*{\dcn}{ h^{\vee}}
\newcommand*{\rf}{\mathcal{F}}
\newcommand*{\univsf}{\mathcal{F}_{\mathrm{sf}}}
\newcommand*{\tropsf}{\mathcal{F}_{\mathrm{trop}}}
\newcommand*{\fpspace}{\mathfrak{F}_{\gamma}}
\newcommand*{\gespace}{\mathfrak{D}_{\gamma}}
\newcommand{\tetvertex}{
	\begin{tikzpicture}
	\draw (0,0.4) rectangle (0.2,0.6);
	\end{tikzpicture}
}
\newcommand{\edgevertex}{
	\begin{tikzpicture}
	\draw [fill] (0,0.5) circle [radius=0.1] ;
	\end{tikzpicture}
}
\newcommand{\network}{\mathcal{N}}
\newcommand{\defnp}{
	\begin{tikzpicture}
	\draw (0,0.8) node{$(i,s)$};
	\draw [fill] (0,0.5) circle [radius=0.1] ;
	\draw [dashed] (0.2, 0.5) -- (1.2,0.5);
	\draw (1.4,0.8) node{$t$};
	\draw (1.3,0.4) rectangle (1.5,0.6);
	\end{tikzpicture}
}
\newcommand{\defn}{
	\begin{tikzpicture}
	\draw (0,0.8) node{$(i,s)$};
	\draw [fill] (0,0.5) circle [radius=0.1] ;
	\draw (0.7,0.7) node{$a$};
	\draw [<-] (0.2, 0.5) -- (1.2,0.5);
	\draw (1.4,0.8) node{$t$};
	\draw (1.3,0.4) rectangle (1.5,0.6);
	\end{tikzpicture}
}
\newcommand{\defnpp}{
	\begin{tikzpicture}
	\draw (0,0.8) node{$(i,s)$};
	\draw [fill] (0,0.5) circle [radius=0.1] ;
	\draw (0.7,0.7) node{$-a$};
	\draw [->] (0.2, 0.5) -- (1.2,0.5);
	\draw (1.4,0.8) node{$t$};
	\draw (1.3,0.4) rectangle (1.5,0.6);
	\end{tikzpicture}
}
\newcommand{\defNp}{
	\begin{tikzpicture}
	\draw (0,0.8) node{$e$};
	\draw [fill] (0,0.5) circle [radius=0.1] ;
	\draw [dashed] (0.2, 0.5) -- (1.2,0.5);
	\draw (1.4,0.8) node{$t$};
	\draw (1.3,0.4) rectangle (1.5,0.6);
	\end{tikzpicture}
}
\newcommand{\defN}{
	\begin{tikzpicture}
	\draw (0,0.8) node{$e$};
	\draw [fill] (0,0.5) circle [radius=0.1] ;
	\draw [<-] (0.2, 0.5) -- (1.2,0.5);
	\draw (1.4,0.8) node{$t$};
	\draw (1.3,0.4) rectangle (1.5,0.6);
	\end{tikzpicture}
}
\newcommand{\defNpp}{
	\begin{tikzpicture}
	\draw (0,0.8) node{$e$};
	\draw [fill] (0,0.5) circle [radius=0.1] ;
	\draw [->] (0.2, 0.5) -- (1.2,0.5);
	\draw (1.4,0.8) node{$t$};
	\draw (1.3,0.4) rectangle (1.5,0.6);
	\end{tikzpicture}
}
\begin{document}
\title{Jacobian matrices of Y-seed mutations}
\author[Y.~Mizuno]{Yuma Mizuno}
\address{Department of Mathematical and Computing Science,
	Tokyo Institute of Technology,
	2-12-1 Ookayama, Meguro-ku, Tokyo 152-8550, Japan.}
\email{mizuno.y.aj@m.titech.ac.jp}
\subjclass[2010]{Primary 13F60;
	Secondary 57M27}
\keywords{Cluster algebra, Quiver mutation}
\date{}
\begin{abstract}
For any quiver mutation sequence, we define a pair of matrices that describe a fixed point equation of a cluster transformation determined from the mutation sequence. We give an explicit relationship between this pair of matrices and the Jacobian matrix of the cluster transformation. Furthermore, we show that this relationship reduces to a relationship between the pair of matrices and the $C$-matrix of the cluster transformation in a certain limit of cluster variables.
As an application, we prove that quivers associated with once-punctured surfaces do not have maximal green or reddening sequences.
\end{abstract}
\maketitle
\tableofcontents
\section{Introduction}
Fomin and Zelevinsky introduced cluster algebras~\cite{FZ1}.
Subsequently, connections between cluster algebras and many areas of mathematics have been discovered.
\emph{Y-seed mutations} are one of the key ingredients in the theory of cluster algebras.
They first appeared in \cite{FZ1} and were studied in more detail in \cite{FZ4}.
They are discrete time evolution of a \emph{Y-seed} $(B,Y)$, where $B$  is a skew-symmetrizable matrix and $Y$ is a rational function.
They can be viewed as a generalization of Zamolodchikov's Y-systems, which were introduced in the study of the thermodynamic Bethe Ansatz~\cite{Zamolodchikov}.
A sequence of Y-seed mutations determines a rational function called \emph{cluster transformation}.
The purpose of this paper is to introduce a pair of matrices describing the fixed point equation of the cluster transformation and to understand the dynamics of the Y-seed mutations using this pair of matrices.

Set $n$ to be a fixed positive integer.
Let $B$ be an $n \times n$ skew-symmetrizable matrix and $y=(y_1 , \dots , y_n)$ be an $n$-tuple of commuting variables.
Given a sequence $m=(m_1,\dots , m_T)$ of integers in $\{ 1, \dots , n \}$ and a sequence $\sigma = (\sigma_1 ,\dots , \sigma_T)$ of permutations of $\{ 1, \dots , n \}$, we obtain the following transitions:
\begin{align}\label{intro:Y transition}
(B(0),Y(0)) \xrightarrow{\sigma_1 \circ \mu_{m_1}} (B(1),Y(1)) \xrightarrow{\sigma_2 \circ \mu_{m_2}} \cdots
\xrightarrow{\sigma_T \circ \mu_{m_T}} (B(T),Y(T)),
\end{align}
where $(B(0),Y(0)) = (B,y)$, $\mu_{m_t}$ is a Y-seed mutation (in the universal semifield) at the vertex $m_t$, and $\sigma_t$ acts on $B$ and $Y$ as $\sigma_t(B)_{ij} = B_{\sigma^{-1}(i)\sigma^{-1}(j)}$ and $\sigma(Y)_i = Y_{\sigma^{-1}(i)}$, respectively.
Each $Y(t)$ is an $n$-tuple of rational functions in $y$.
We say that the triple $\gamma = (B, m,\sigma)$ is a \emph{mutation sequence}.
We also define the (vector-valued) rational function $\mu_\gamma$ as $\mu_\gamma (y) = Y (T)$, and call it the cluster transformation of $\gamma$.

\begin{figure}
	\begin{tikzpicture}
	[baseline={([yshift=-.5ex]current bounding box.center)},scale=1.7,auto=left,black_vertex/.style={circle,draw,fill,scale=0.75},square_vertex/.style={rectangle,scale=0.8,draw}]
	\node (a) at (0,0) [black_vertex]{};
	\node (b) at (1,0) [black_vertex]{}; 
	\node (t1) at (0,-1) [square_vertex]{};
	\node (t2) at (1,-1) [square_vertex]{};
	\draw [dashed] (a)edge [bend left=15](t1);
	\draw [dashed] (t1)edge [bend left=15](a);
	\draw [dashed] (b)edge [bend left=15](t2);
	\draw [dashed] (t2)edge [bend left=15](b);
	\draw[arrows={-Stealth[scale=1.2]}] (b)--(t1);
	\draw[arrows={-Stealth[scale=1.2]}] (a)--(t2);
	\end{tikzpicture}\quad\quad
	\begin{tikzpicture}
	[baseline={([yshift=-.5ex]current bounding box.center)},scale=1.7,auto=left,black_vertex/.style={circle,draw,fill,scale=0.75},square_vertex/.style={rectangle,scale=0.8,draw}]
	\node (e1) at (90:1) [black_vertex]{};
	\node (t1) at (90-60:1) [square_vertex]{};
	\node (e3) at (90-60*2:1) [black_vertex]{};
	\node (t3) at (90-60*3:1) [square_vertex]{};
	\node (e2) at (90-60*4:1) [black_vertex]{};
	\node (t2) at (90-60*5:1) [square_vertex]{};
	\draw [dashed] (e1)edge [bend right=15](t1);
	\draw [dashed] (t1)edge [bend right=15](e3);
	\draw [dashed] (e3)edge [bend right=15](t3);
	\draw [dashed] (t3)edge [bend right=15](e2);
	\draw [dashed] (e2)edge [bend right=15](t2);
	\draw [dashed] (t2)edge [bend right=15](e1);
	\draw [arrows={-Stealth[scale=1.2] Stealth[scale=1.2]}] (t1)edge [bend right=15](e1);
	\draw [arrows={-Stealth[scale=1.2] Stealth[scale=1.2]}] (t1)edge [bend left=15](e3);
	\draw [arrows={-Stealth[scale=1.2] Stealth[scale=1.2]}] (t3)edge [bend right=15](e3);
	\draw [arrows={-Stealth[scale=1.2] Stealth[scale=1.2]}] (t3)edge [bend left=15](e2);
	\draw [arrows={-Stealth[scale=1.2] Stealth[scale=1.2]}] (t2)edge [bend right=15](e2);
	\draw [arrows={-Stealth[scale=1.2] Stealth[scale=1.2]}] (t2)edge [bend left=15](e1);
	\draw [arrows={-Stealth[scale=1.2] Stealth[scale=1.2]}] (e2)edge (t1);
	\draw [arrows={-Stealth[scale=1.2] Stealth[scale=1.2]}] (e3)edge (t2);
	\draw [arrows={-Stealth[scale=1.2] Stealth[scale=1.2]}] (e1)edge (t3);
	\end{tikzpicture}
	\caption{Examples of mutation networks. The left-hand mutation network appears in Example \ref{example: A2}, while the right-hand one appears in Example \ref{example: main theorem}.}
	\label{intro: mutation network}
\end{figure}
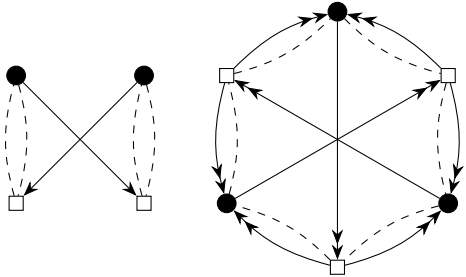

We can assign a combinatorial object $\network_\gamma$, called a \emph{mutation network}, to any mutation sequence $\gamma$.
Mutation networks were introduced in the physics literature~\cite{TerashimaYamazaki} to describe certain gauge theories.
They can be regarded as a generalization of ideal triangulations of 3-manifolds, which have been frequently used in the study of hyperbolic 3-manifolds since the work of Thurston~\cite{Thurston}.
Figure \ref{intro: mutation network} shows two examples of mutation networks.
From the combinatorial data of $\network_\gamma$, we define two integer matrices, $A_{+}$ and $A_{-}$,
which we call the \emph{Neumann-Zagier matrices} of $\network_\gamma$ (or of $\gamma$).
Given the relationship between mutation networks and ideal triangulations of 3-manifolds explained in~\cite{TerashimaYamazaki}, these matrices can be regarded as analogues of the Neumann-Zagier matrices for ideal triangulation of a 3-manifold, which were used by Neumann and Zagier to estimate the volume of a hyperbolic 3-manifold~\cite{NeumannZagier} (see Section \ref{section: surface examples}).
They are also related to the dilogarithm identities in conformal field theories when we consider mutation sequences on Dynkin quivers (see Section \ref{section: dynkin examples}).
The matrices $A_{\pm}$ have $T$ columns, equal to the length of the mutation sequence \eqref{intro:Y transition}.
We say that a mutation sequence is fully mutated if, roughly speaking, mutations occur at least once at all indices.
A more precise definition is provided in Section \ref{subsection: fully mutated}.
If a mutation sequence is fully mutated, the Neumann-Zagier matrices also have $T$ rows.
In this case, we can define a natural ordering of the rows, and $A_{+}$ and $A_{-}$ are $T \times T$ matrices.
In Section \ref{subsection: gluing equation}, we show that $A_{+}$ and $A_{-}$ actually describe the fixed point equation of the cluster transformation if $\gamma$ is fully mutated.

As mentioned before, our aim is to use the pair of matrices $(A_+,A_-)$ to understand the dynamics of the Y-seed mutations. 
The main result of this paper is to derive an explicit formula that gives a relationship between this pair of matrices and the Jacobian matrix of the cluster transformation.
Let $J_\gamma (y)$ be the Jacobian matrix of $\mu_\gamma$:
\begin{align*}
J_\gamma (y) = \left( \frac{\partial \mu_i}{\partial y_j} \right)_{1 \leq i,j \leq n},
\end{align*}
where $\mu_i$ is the $i$-th component of $\mu_\gamma(y)$.
Finally, define the $n \times n$ matrix $K_\gamma(y)$ as
\begin{align*}
K_\gamma(y) = 
\begin{pmatrix}
\mu_1(y) &  &    \\
 & \ddots  &   \\
  &  &  \mu_n(y)  
\end{pmatrix}^{-1}
J_\gamma (y)
\begin{pmatrix}
y_1 &  &     \\
 & \ddots  &  \\
 &   &  y_n  
\end{pmatrix}.
\end{align*}
For $t=1, \dots , T$, define rational functions $z_{t,+}$ and $z_{t,-}$ as
\begin{align*}
	z_{t,+} =  \frac{Y_{m_t}}{Y_{m_t}+1 } , \; 
	z_{t,-}  =  \frac{1}{Y_{m_t} +1 }    .
\end{align*}
Here, for brevity, we have simply written $Y_{m_t}(t-1)$ as $Y_{m_t}$.
Let $Z_{+}(y)$ and $Z_{-}(y)$ be the $T \times T$ diagonal matrices whose $t$-th entries are $z_{t,+}$ and $z_{t,-}$:
\begin{align*}
Z_{+}(y) = 
\begin{pmatrix}
z_{1,+} &  &     \\
& \ddots  &   \\
&  &  z_{T,+}  \\
\end{pmatrix},\;
Z_{-}(y) = 
\begin{pmatrix}
z_{1,-} &  &     \\
& \ddots  &   \\
&  &  z_{T,-}  \\
\end{pmatrix}.
\end{align*}

\begin{theorem}[Theorem \ref{theorem:det formula}]
	\label{theorem: intro det formula}
	Let $\gamma$ be a fully mutated mutation sequence.
	Then
	\begin{align}\label{intro: det formula}
	\det(I_n - K_\gamma(y)) = \det (A_+ Z_- (y) + A_-  Z_+ (y) ).
	\end{align}
\end{theorem}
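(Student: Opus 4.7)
The plan is to combine the chain rule for log-Jacobians with a block-matrix identity to reduce the $n \times n$ determinant $\det(I_n - K_\gamma(y))$ to a $T \times T$ determinant, and then to match this $T \times T$ determinant with $\det(A_+ Z_-(y) + A_- Z_+(y))$ using the combinatorial description of $A_\pm$ in terms of the mutation network $\network_\gamma$.

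First, because $K_\gamma$ is the matrix of log-derivatives $\partial \log \mu_i / \partial \log y_j$, the chain rule in logarithmic coordinates gives
\[
K_\gamma(y) = L_T L_{T-1} \cdots L_1,
\]
where $L_t$ is the log-Jacobian of the single step $\sigma_t \circ \mu_{m_t}$ evaluated at $Y(t-1)$. A direct computation from the Y-seed mutation formula yields $L_t = I_n + v_t e_{m_t}^\top$, a rank-one update of the identity, with
\[
(v_t)_i = [B(t-1)_{m_t, i}]_+ \, z_{t,-} + [-B(t-1)_{m_t, i}]_+ \, z_{t,+} \qquad (i \neq m_t),
\]
and $(v_t)_{m_t} = -2$ (the latter coming from $Y_{m_t}\mapsto Y_{m_t}^{-1}$). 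Telescoping the product yields a global factorization
\[
K_\gamma(y) - I_n = V(y)\, D(y)\, E^\top,
\]
where $V(y)$ is the $n \times T$ matrix of columns $v_t$, $E$ is the $n \times T$ matrix of columns $e_{m_t}$, and $D(y)$ is a $T \times T$ lower-unitriangular matrix whose subdiagonal entries record the propagation of information between mutation steps.

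Second, I would convert the $n\times n$ determinant on the left into a $T\times T$ determinant via a Schur-complement / block-matrix identity applied to an appropriate bordered matrix built from $V$, $D$, $E$, and the scalars $z_{t,\pm}$. The output is an explicit $T\times T$ matrix $M(y)$ satisfying $\det(I_n - K_\gamma(y)) = \det M(y)$. A complementary viewpoint, which may in fact be closer to the author's argument, is that $I_n - K_\gamma$ is the log-Jacobian of the fixed-point defect $\log \mu_\gamma(y) - \log y$, while the fixed-point equation of $\mu_\gamma$ is by construction encoded by $A_\pm$; the $T\times T$ matrix $A_+ Z_- + A_- Z_+$ is then the Jacobian of that fixed-point system, and the desired equality of determinants should follow from a change-of-variables argument relating the two linearizations.

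The main technical obstacle, and the combinatorial core of the proof, is the identification of $M(y)$ with $A_+ Z_-(y) + A_- Z_+(y)$. Both are $T\times T$ matrices whose $(s,t)$-entries are integer combinations of $z_{t,-}$ and $z_{t,+}$: on the algebraic side, the integer coefficients arise from the exchange matrix entries $B(t-1)_{m_t, m_s}$ together with telescoping contributions from $D(y)$; on the combinatorial side, they come from the adjacency data of $\network_\gamma$ used to define $A_\pm$. The fully mutated hypothesis guarantees that the rows of $A_\pm$ are canonically indexed by mutation steps, making the comparison well-posed, and the fixed-point / gluing-equation characterization of $A_\pm$ established earlier in the paper provides the bridge between the algebraic and combinatorial descriptions. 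I expect the first step to be a routine calculation, the block-matrix reduction in the second step to require some care in choosing the right bordered matrix, and the entry-by-entry matching in the third step to be the principal technical hurdle.
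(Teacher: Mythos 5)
Your overall skeleton points in the right direction: the single-step computation is correct in spirit (up to the omitted permutation factor, the one-step ``log-Jacobian'' is exactly $P_{\sigma_t}\bigl(z_{t,-}F_{m_t,+}^T+z_{t,+}F_{m_t,-}^T\bigr)$, which is Proposition \ref{proposition: J plus minus}), and the paper does ultimately reduce $\det(I_n-K_\gamma(y))$ to a $T\times T$ determinant matched against the Neumann--Zagier data. But as written there are two genuine gaps. First, the reduction step does not go through as sketched. After telescoping you get $K_\gamma(y)-I_n=V(y)D(y)E^{\top}$, so $I_n-K_\gamma(y)=-V(y)D(y)E^{\top}$ has \emph{no} ``identity plus low-rank'' structure; the Weinstein--Aronszajn/Schur-complement identity $\det(I_n+AB)=\det(I_T+BA)$ therefore does not apply, and since $T\ge n$ a Cauchy--Binet expansion gives a sum of minors rather than a single $T\times T$ determinant. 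You never specify the bordered matrix that is supposed to fix this, and choosing it is not routine. The paper sidesteps the issue by embedding everything into the $nT\times nT$ block-cyclic matrix $\mathcal{L}$, using $\det(I-\mathcal{L})=\det(I_n-K_\gamma(y))$ on one side, and on the other side multiplying by $(I-\alpha_{m,\sigma})^{-1}$, which has determinant $1$ because $\alpha_{m,\sigma}$ is nilpotent --- and nilpotency is exactly where the fully mutated hypothesis enters (Lemma \ref{lemma: nilpotent}), not merely in the indexing of rows.

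Second, you explicitly defer ``the principal technical hurdle,'' namely the identification of your $T\times T$ matrix $M(y)$ with $A_+Z_-(y)+A_-Z_+(y)$, and the bridge you propose cannot supply it. Proposition \ref{prop: bijection F D} (gluing equations versus fixed points) is a multiplicative statement about the locus $\mu_\gamma(\eta)=\eta$; the theorem, by contrast, is an identity of rational functions valid for all $y$, so a linearization-at-fixed-points or change-of-variables argument can at best verify the formula on $\fpspace$. The paper's actual mechanism is combinatorial: Lemma \ref{lemma: NZ from X} computes the special entries of $X_{\boldsymbol{\varepsilon}}=(I-\alpha_{m,\sigma})^{-1}(I-L_{\boldsymbol{\varepsilon}})$ by expanding $(I-\alpha_{m,\sigma})^{-1}=\sum_k(\alpha_{m,\sigma})^k$ as path counting in the graph $G_{\alpha_{m,\sigma}}$, and then matches them with the adjacency counts $n^0_{et}$, $n^{\pm}_{et}$ via the explicit formulas of Lemmas \ref{lemma: n0} and \ref{lemma: n epsilon}; this is precisely the bookkeeping of your matrix $D(y)$ ``propagating information between steps,'' and it is the content you would need to write out. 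Without (a) a correct $n\times n\to T\times T$ determinant reduction and (b) this entrywise identification, the proposal is a plan rather than a proof.
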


An important feature of Theorem \ref{theorem: intro det formula}
is that the right-hand side is determined by a mutation sequence while the left-hand side depends only on the cluster transformation.
This means that the right-hand side is an ``invariant'' of a cluster transformation defined by data of mutation points.
For a fully mutated mutation sequence $\gamma$,
we denote the right-hand side of \eqref{intro: det formula} by $\tau_{\gamma} (y)$.
\begin{corollary}[Corollary \ref{corollary: tau = tau'}]
	\label{corollary: intro identity}
	Suppose that $\gamma$ and $\gamma'$ are fully mutated mutation sequences.
	Then $\mu_\gamma(y) = \mu_{\gamma'}(y)$ implies $\tau_{\gamma}(y) = \tau_{\gamma'}(y)$.
\end{corollary}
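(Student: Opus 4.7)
The plan is to observe that Corollary \ref{corollary: intro identity} is essentially a tautology once Theorem \ref{theorem: intro det formula} is in hand, so the only work is to check that the left-hand side of \eqref{intro: det formula} is intrinsic to the cluster transformation $\mu_\gamma$ and does not see the mutation sequence $\gamma$ itself.

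First I would unpack the definition of $K_\gamma(y)$. By construction it is built from three pieces: the diagonal matrix $\operatorname{diag}(\mu_1(y),\dots,\mu_n(y))^{-1}$, the Jacobian matrix $J_\gamma(y)$ of $\mu_\gamma$, and the diagonal matrix $\operatorname{diag}(y_1,\dots,y_n)$. The last factor clearly depends only on $y$. The first factor depends only on the rational function $\mu_\gamma$. And $J_\gamma(y)$, being the Jacobian of $\mu_\gamma$, is obtained by differentiating $\mu_\gamma$ entrywise, so it too is determined by $\mu_\gamma$ alone. Hence if $\mu_\gamma(y) = \mu_{\gamma'}(y)$ as vectors of rational functions, then $J_\gamma(y) = J_{\gamma'}(y)$ and consequently $K_\gamma(y) = K_{\gamma'}(y)$.

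The conclusion is then immediate: by Theorem \ref{theorem: intro det formula} applied to $\gamma$ and to $\gamma'$ (both are assumed to be fully mutated),
\begin{align*}
\tau_\gamma(y) \;=\; \det(A_+^\gamma Z_-^\gamma(y) + A_-^\gamma Z_+^\gamma(y))
\;=\; \det(I_n - K_\gamma(y))
\;=\; \det(I_n - K_{\gamma'}(y))
\;=\; \tau_{\gamma'}(y),
\end{align*}
where the middle equality uses the equality of the $K$-matrices just established.

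There is essentially no obstacle here; the content of the corollary is really a conceptual one, namely that the combinatorially defined quantity $\tau_\gamma(y)$ happens to be a function of the cluster transformation alone, and Theorem \ref{theorem: intro det formula} is precisely what lets us transfer this fact to the ``mutation-sequence side''. The only thing one should double check is that the notation $K_\gamma(y)$ is unambiguously defined from $\mu_\gamma$ (i.e.\ nothing in its definition secretly depends on the sequence $(m,\sigma)$), which is clear from the displayed formula.
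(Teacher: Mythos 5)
Your proposal is correct and matches the paper's argument: the paper likewise deduces the corollary immediately from Theorem \ref{theorem:det formula}, noting that the left-hand side $\det(I_n - K_\gamma(y))$ depends only on the cluster transformation $\mu_\gamma$ (since $K_\gamma(y)=\mathcal{Y}(T)^{-1}J_\gamma(y)\mathcal{Y}(0)$ is built solely from $\mu_\gamma$, its Jacobian, and $y$). You merely spell out this dependence in more detail than the paper does.
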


For example, let $B$ be the matrix defined by
\begin{align*}
B=
\begin{pmatrix}
	0 & -1 \\
	1 & 0
\end{pmatrix},
\end{align*}
and define two mutation sequences $\gamma$ and $\gamma'$ by
$\gamma = (B, (1,2) , (\id,\id))$ and $\gamma' =(B, (2,1,2), (\id, \id, (1 \; 2))$.
It is well-known that the cluster transformations of these two mutation sequences coincide, that is, $\mu_{\gamma}(y) = \mu_{\gamma'}(y)$.
Then Corollary \ref{corollary: intro identity}
yields the following identity:
\begin{align*}
	\det
	\begin{pmatrix}
	2 & -\frac{y_1}{1+y_1} \\
	-\frac{y_2(1+y_1)}{1+y_2+y_1 y_2} & 2
	\end{pmatrix} 
	= \det
	\begin{pmatrix}
	1 & -\frac{1 + y_2}{ 1+y_2 + y_1 y_2 } & 1\\
	1 & 1 & -\frac{1+y_2 + y_1 y_2}{(1+y_1)(1+y_2)} \\
	-\frac{1}{1+y_2} & 1 & 1
	\end{pmatrix},
\end{align*}
where the left-hand side is $\tau_{\gamma}(y)$ and the right-hand side is $\tau_{\gamma'}(y)$. See Example \ref{example: tau=tau'} for more detail.

As an application of Theorem \ref{theorem: intro det formula}, we prove that skew-symmetric matrices associated with once-punctured surfaces do not have maximal green or reddening sequences.
A \emph{maximal green} (and, more generally, \emph{reddening}) sequence is a special mutation sequence on a skew-symmetric matrix that was introduced by Keller~\cite{Keller2011}.
For a fixed skew-symmetric matrix, reddening sequences may or may not exist.
The existence of reddening sequences has important consequences. It produces explicit formulas of Kontsevich-Soibelman's refined Donaldson-Thomas invariants~\cite{Keller2011}, and also gives a sufficient condition for the existence of a theta basis of the upper cluster algebra~\cite{GHKK}.
(See~\cite{keller2019survey} for more details on maximal green and reddening sequences).
Thus it is important to determine whether a skew-symmetric matrix has reddening sequences or not.
The following was first proved by Ladkani~\cite{ladkani2013cluster} (for maximal green sequences),
and we give another proof for this result.

\begin{theorem}[Theorem \ref{theorem: no reddening}]
	\label{intro: no reddening}
	Let $B$ be a skew-symmetric matrix for an ideal triangulation of a once-punctured surface.
	Then $B$ does not have reddening or maximal green sequences.
\end{theorem}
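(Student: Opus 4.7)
My plan is to apply Theorem~\ref{theorem: intro det formula} by contradiction. Suppose $\gamma$ is a reddening sequence for $B$ (so that $\gamma$ is fully mutated); a maximal green sequence is included as a special case. A defining property of a reddening sequence is that every $F$-polynomial at the final seed equals $1$, which forces the cluster transformation to be the pure Laurent monomial map $\mu_\gamma(y) = (y_{\sigma(1)}^{-1},\ldots,y_{\sigma(n)}^{-1})$, where $\sigma$ is the permutation determined by $C(T) = -P_\sigma$. A direct computation then gives $K_\gamma(y) = -P_\sigma$ as a constant matrix, so
\begin{equation*}
\det(I_n - K_\gamma(y)) \;=\; \det(I_n + P_\sigma)
\end{equation*}
is a nonnegative integer independent of $y$. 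Theorem~\ref{theorem: intro det formula} then forces the rational function $\tau_\gamma(y) = \det\bigl(A_+ Z_-(y) + A_- Z_+(y)\bigr)$ to be equal to this constant. Consequently, to prove the theorem it suffices to show that $\tau_\gamma(y)$ is a \emph{non-constant} rational function of $y$ whenever $\gamma$ is a fully mutated mutation sequence on a quiver arising from an ideal triangulation of a once-punctured surface.

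The non-constancy should come from the topological structure built into $\network_\gamma$ via the triangulation. An ideal triangulation of a once-punctured surface carries a distinguished peripheral cycle at the puncture, and under the dictionary of Section~\ref{section: surface examples} this cycle corresponds to a nontrivial Laurent monomial $w(y) = \prod_i y_i^{a_i}$ that is invariant under every $Y$-seed mutation, hence under $\mu_\gamma$. Using this monomial together with the Neumann--Zagier-type symplectic relations between $A_+$ and $A_-$ coming from a surface triangulation, I would exhibit a one-parameter family $y(s)$ along which $w(y(s))$ stays constant but some $z_{t,\pm}(y(s))$ varies non-trivially, and then show that the variation of $\det\bigl(A_+ Z_-(y(s)) + A_- Z_+(y(s))\bigr)$ along $s$ is nonzero, so that $\tau_\gamma$ cannot be identically constant.

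The main obstacle is that the matrices $A_\pm$ are attached to the particular mutation sequence $\gamma$, not to the underlying triangulation, so the argument outlined above is cleanest when $\gamma$ comes from a prescribed flip sequence on a fixed triangulation. To cover an arbitrary $\gamma$, I would invoke Corollary~\ref{corollary: intro identity} --- $\tau_\gamma$ depends only on the cluster transformation $\mu_\gamma$ --- to reduce the non-constancy claim to a finite list of canonical representatives, one per equivalence class of cluster transformations produced by fully mutated sequences on $B$. Verifying that every reddening candidate is equivalent to such a representative, and then executing the peripheral-monomial variation on that representative without cancellations, is the delicate combinatorial heart of the argument.
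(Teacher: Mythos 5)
Your opening step rests on a false characterization of reddening sequences, and this is fatal to the whole strategy. What a reddening sequence guarantees is that the final $C$-matrix is nonpositive (and, after Proposition \ref{proposition: reddening}, may be taken to be $-I_n$); it does \emph{not} trivialize the final $F$-polynomials, and the cluster transformation in the universal semifield is not the monomial map $y\mapsto\sigma(y)^{-1}$. The paper's own Example \ref{example: tau=tau'} is a counterexample: for $B$ of type $A_2$, the sequence $\gamma=(B,(1,2),(\id,\id))$ is maximal green (hence reddening) with $C(2)=-I_2$, yet $\mu_\gamma(y)=\bigl(y_1^{-1}(1+y_2+y_1y_2),\,y_2^{-1}(1+y_1)^{-1}\bigr)$ and $\det(I_2-K_\gamma(y))=(4+4y_2+3y_1y_2)/(1+y_2+y_1y_2)$, which is non-constant. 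So $K_\gamma(y)$ is not the constant matrix $-P_\sigma$, $\det(I_n-K_\gamma(y))$ need not be constant for a reddening sequence, and the intended contradiction ``constant versus non-constant $\tau_\gamma$'' cannot even get started. The remainder of your proposal (invariant peripheral monomial, a one-parameter variation with nonvanishing derivative, reduction to canonical representatives via Corollary \ref{corollary: tau = tau'}) is moreover only a plan, with the decisive non-vanishing left unproved; note also that a reddening sequence need not be fully mutated a priori, which has to be repaired (the paper does this by appending two consecutive mutations).

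The information that reddening provides becomes usable only after tropicalization, and that is how the paper argues: by Theorem \ref{theorem: y0 limit}, taking $y\to0$ in \eqref{eq:det formula} turns $K_\gamma(y)^T$ into $C(T)$ and turns $A_+Z_-(y)+A_-Z_+(y)$ into the signed Neumann--Zagier matrix $A_{\boldsymbol{\varepsilon}_{\mathrm{trop}}}$, so a reddening sequence with $C(T)=-I_n$ would force $\det A_{\boldsymbol{\varepsilon}_{\mathrm{trop}}}=\det(2I_n)=2^n$. The contradiction then comes not from non-constancy but from an exact linear relation imposed by the once-punctured surface: since no ideal triangulation of such a surface has self-folded edges, each mutated row of the exchange matrix has positive entries summing to $2$ and negative entries summing to $-2$, so every square vertex of $\network_\gamma$ has two incoming arrows, two outgoing arrows, and two broken ends; hence every column of $A_+$ and of $A_-$ sums to zero and $\det A_{\boldsymbol{\varepsilon}}=0$ for \emph{every} sign sequence $\boldsymbol{\varepsilon}$. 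If you want to salvage your approach, you must extract the consequence of $C(T)=-I_n$ through the tropical limit (or some equivalent mechanism), not through a purported monomiality of $\mu_\gamma$.
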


Our proof is based on the relation between the Neumann-Zagier matrices and the \emph{$C$-matrix}, the matrix describing the \emph{tropical} Y-seed mutations, obtained by taking a $y \to 0$ limit in \eqref{intro: det formula} (see Theorem \ref{theorem: y0 limit}).

\subsection*{Acknowledgements}
The author gratefully acknowledges the help provided by Yuji Terashima.
The author also thanks Pavel Galashin, Shunsuke Kano, Akishi Kato, Atsuo Kuniba, Tomoki Nakanishi, and Masato Okado for their valuable comments.
This work is supported by JSPS KAKENHI Grant Number JP18J22576.

\section{Y-seed mutations}\label{section: y-seed mutation}
\subsection{Y-seeds and their mutations}
Here, we review Y-seed mutations in universal semifields, following~\cite{FZ4}.
First, set $n$ to be a fixed positive integer.
An $n \times n$ integer matrix $B=(B_{ij})_{1 \leq i,j \leq n}$ is skew-symmetrizable if 
there exist positive integers $d_1 , \dots , d_n$ such that $d_i B_{ij} = - d_j B_{ji}$.
Let $\rf$ be the field of rational functions in the variables $y_1 , \dots , y_n$ over $\Q$, and let
\begin{align}
\univsf = \left\{ \frac{f(y_1, \dots ,y_n)}{g(y_1 , \dots, y_n)} \in \mathcal{F} \;\middle|\; \parbox{16em}{$f$ and $g$ are non-zero polynomials in $\Q [y_1,\dots, y_n ] $ with non-negative coefficients.} \right\}
\end{align}
be the set of subtraction-free rational expressions in $y_1, \dots, y_n$ over $\Q$.
This is closed under the usual multiplication and addition.

A Y-seed is a pair $(B,Y)$ where $B=(B_{ij})_{1 \leq i,j \leq n}$ is an $n \times n$ skew-symmetrizable integer matrix, called an \emph{exchange matrix}, and $Y = (Y_1 \dots, Y_n)$ is an $n$-tuple of elements of $\univsf$.
Given this, Y-seed mutations are defined as follows.

\begin{definition}
	Let $(B,Y)$ be a Y-seed, and let $k \in \{ 1, \dots , n \}$.
	The Y-seed mutation $\mu_k$ is a transformation that transforms $(B,Y)$ into
	the Y-seed $\mu_k (B,Y) = (\widetilde{B} , \widetilde{Y} )$ defined as follows:
	\begin{align}
	\label{eq: B mutation}
	\widetilde{B}_{ij} =
	&\begin{cases}
	-B_{ij} & \text{if $i=k$ or $j=k$},\\
	B_{ij} + B_{ik}B_{kj} & \text{if $B_{ik}>0$ and $B_{kj}>0$}, \\
	B_{ij} - B_{ik}B_{kj}& \text{if $B_{ik}<0$ and $B_{kj}<0$}, \\
	B_{ij} & \text{otherwise.}
	\end{cases} \\
	\label{eq: Y mutation}
	\widetilde{Y}_i =
	&\begin{cases}
	Y_{k}^{-1} & \text{if $i=k$}, \\
	Y_{i}  (Y_k^{-1}+1)^{-B_{ki}} & \text{if $i \neq k$, $B_{ki} \geq 0$}, \\
	Y_{i}  (Y_k+1)^{-B_{ki}} & \text{if $i \neq k$, $B_{ki} \leq 0$}.
	\end{cases} 
	\end{align}
\end{definition}

When $B$ is skew-symmetric, it is convenient to identify it with a quiver.
A \emph{quiver} is a finite directed graph that may have multiple edges. Here, we assume that quivers do not have 1-loops and 2-cycles:
\begin{align*}
\begin{tikzpicture}
\node(a) at (0,0) [circle,draw,fill,scale=0.6]{};
\draw [arrows={-Stealth[scale=1.2]}] (a) edge [in=140,out=40,looseness=15] (a);
\node(b) at (2,0.2) [circle,draw,fill,scale=0.6]{};
\node(c) at (3,0.2) [circle,draw,fill,scale=0.6]{};
\draw[arrows={-Stealth[scale=1.2]}] (b) edge [bend left=30](c);
\draw[arrows={-Stealth[scale=1.2]}] (c) edge [bend left = 30](b);
\node (lu) at (-0.7,1.4-0.4) []{};
\node (rb) at (0.7,0-0.4) []{};
\node (lb) at (-0.7,-0.4) []{};
\node (ru) at (0.7,1.4-0.4) []{};
\draw[] (lu)--(rb);
\draw[] (lb)--(ru);
\node (lu2) at (-0.7+2.5,1.4-0.4) []{};
\node (rb2) at (0.7+2.5,0-0.4) []{};
\node (lb2) at (-0.7+2.5,-0.4) []{};
\node (ru2) at (0.7+2.5,1.4-0.4) []{};
\draw[] (lu2)--(rb2);
\draw[] (lb2)--(ru2);
\end{tikzpicture}.
\end{align*}
For a skew-symmetric integer matrix $B$ and indices $1 \leq i,j \leq n$,
we define the non-negative integer $Q_{ij}$ by
\begin{align}
	Q_{ij} = \maxzero{B_{ij}}
\end{align}
where $\maxzero{x} = \max(0,x)$.
Let $Q$ be a quiver with vertices labeled by the indices $1,\dots , n$ that has $Q_{ij}$ edges from $i$ to $j$ for each pair of vertices $(i,j)$.
The skew-symmetric matrix $B$ can then be recovered from $Q$ by
\begin{align}
	B_{ij} = Q_{ij} - Q_{ji}.
\end{align}
This correspondence gives a bijection between the set of $n \times n$ skew-symmetric integer matrices and the set of quivers with vertices labeled by $1, \dots, n$.
In the language of quivers, the mutation rules \eqref{eq: B mutation} can be described as follows:
\begin{enumerate}
	\item For each length two path $i \to k \to j$, add a new arrow $i \to j$.
	\item Reverse all arrows incident to the vertex $k$.
	\item Remove all 2-cycles.
\end{enumerate}
The transition
\[
\begin{tikzpicture}
[scale=1.0]
\node(k) at (0,0) [circle,scale=0.6,draw,fill]{};
\node(label) at (0.25,-0.25) []{$k$};
\node(a) at (-1,0) [circle,scale=0.6,draw,fill]{};
\node(b) at (0.7,0.7) [circle,scale=0.6,draw,fill]{};
\node(c) at (0,-1) [circle,scale=0.6,draw,fill]{};
\draw[arrows={-Stealth[scale=1.2] Stealth[scale=1.2]}] (k)--(c);
\foreach \from/\to in {k/b,a/k,b/a}
\draw[arrows={-Stealth[scale=1.2]}] (\from)--(\to);
\node(mutation) at (1.8,-0.2) [scale=1.2] {$\longrightarrow$};
\node(mutation) at (1.8,0.2) [] {$\mu_k$};
\node(k') at (4,0) [circle,scale=0.6,draw,fill]{};
\node(label') at (4.25,-0.25) []{$k$};
\node(a') at (3,0) [circle,scale=0.6,draw,fill]{};
\node(b') at (4.7,0.7) [circle,scale=0.6,draw,fill]{};
\node(c') at (4,-1) [circle,scale=0.6,draw,fill]{};
\draw[arrows={-Stealth[scale=1.2] Stealth[scale=1.2]}] (c')--(k');
\draw[arrows={-Stealth[scale=1.2] Stealth[scale=1.2]}] (a')--(c');
\draw[arrows={-Stealth[scale=1.2]}] (b')--(k');
\draw[arrows={-Stealth[scale=1.2]}] (k')--(a');
\end{tikzpicture}
\]
is an example of a quiver mutation, where we have omitted all labels other than the vertex $k$.

\subsection{Mutation sequences and mutation loops}
Let $\sigma$ be a permutation of $\{1 ,\dots , n \}$.
We define the action of $\sigma$ on a Y-seed by $\sigma(B,Y) = (\sigma(B),\sigma(Y) )$ where $\sigma(B)_{ij} = B_{\sigma^{-1}(i)\sigma^{-1}(j)}$ and $\sigma(Y)_i = Y_{\sigma^{-1}(i)}$.

Again, let $B$ be a skew-symmetrizable $n \times n$ matrix, and $y=(y_1 , \dots , y_n)$.
Let $m=(m_1,\dots , m_T)$ be a sequence of integers in $\{ 1, \dots , n \}$, and $\sigma = (\sigma_1 ,\dots , \sigma_T)$ be a sequence of permutations of $\{ 1, \dots , n \}$.
Let $(B(0),y(0)) = (B,y) $, and $(B(t),Y(t) ) = \sigma_t ( \mu_{m_t} (B(t-1),Y(t-1))$ for $t=1,\dots, T$.
This gives us the following Y-seed transitions:
\begin{align}\label{eq: Y transition}
(B(0),Y(0)) \xrightarrow{\sigma_1 \circ \mu_{m_1}} (B(1),Y(1)) \xrightarrow{\sigma_2 \circ \mu_{m_2}} \cdots
\xrightarrow{\sigma_T \circ \mu_{m_T}} (B(T),Y(T)).
\end{align}
We say that the triple $\gamma = (B,m,\sigma)$ is a \emph{mutation sequence}.
A mutation sequence is called a \emph{mutation loop} when $B(0)=B(T)$.
In addition, we call the (vector-valued) rational function $\mu_\gamma(y)$ given by
\begin{align}\label{eq: def of f}
	\mu_\gamma (y) = Y(T)
\end{align}
the \emph{cluster transformation of the mutation sequence $\gamma$}.
Iterating the transitions \eqref{eq: Y transition} only changes $Y$ if $B(0)=B(T)$, and these changes can be obtained by iteratively applying $\mu_\gamma$.
\begin{example}\label{example: mutation loop}
	If we define the skew-symmetric matrix $B$ by
	\begin{align*}
	B=
	\begin{pmatrix}
		0 & -1 & 2 & 2 & -1 \\
		1 & 0 & -3 & 0 & 2 \\
		-2 & 3 & 0 & -3 & 2 \\
		-2 & 0 & 3 & 0 & -1 \\
		1 & -2 & -2 & 1 & 0
	\end{pmatrix},
	\end{align*}
	the corresponding quiver $Q$ is given by
	\begin{align*}
	Q=
	\begin{tikzpicture}
	[baseline={([yshift=-.5ex]current bounding box.center)},scale=1.7,auto=left,black_vertex/.style={circle,draw,fill,scale=0.75}]
	\node (1) at (90:1) [black_vertex][label=right:${1}$]{};
	\node (2) at (90-72:1) [black_vertex][label=right:${2}$]{};
	\node (3) at (90-72*2:1) [black_vertex][label=right:${3}$]{};
	\node (4) at (90-72*3:1) [black_vertex][label=left:${4}$]{};
	\node (5) at (90-72*4:1) [black_vertex][label=left:${5}$]{};
	\draw[arrows={-Stealth[scale=1.2] Stealth[scale=1.2]}] (1)--(3);
	\draw[arrows={-Stealth[scale=1.2] Stealth[scale=1.2]}] (1)--(4);
	\draw[arrows={-Stealth[scale=1.2]}] (2)--(1);
	\draw[arrows={-Stealth[scale=1.2] Stealth[scale=1.2]}] (2)--(5);
	\draw[arrows={-Stealth[scale=1.2] Stealth[scale=1.2] Stealth[scale=1.2]}] (3)--(2);
	\draw[arrows={-Stealth[scale=1.2] Stealth[scale=1.2]}] (3)--(5);
	\draw[arrows={-Stealth[scale=1.2] Stealth[scale=1.2] Stealth[scale=1.2]}] (4)--(3);
	\draw[arrows={-Stealth[scale=1.2]}] (5)--(1);
	\draw[arrows={-Stealth[scale=1.2]}] (5)--(4);
	\end{tikzpicture}.
	\end{align*}
	Next, define the cyclic permutation $p$ by $p=(5\;4\;3\;2\;1)$.
	For a given non-negative integer $T$, define $m=(m_1, \dots , m_T)$ where $m_t = 1$ for all $t$, and $\sigma = (\sigma_1 , \dots, \sigma_T)$ where $\sigma_t = p$ for all $t$.
	Because
	\begin{align*}
	\mu_1  :
	\begin{tikzpicture}
	[baseline={([yshift=-.5ex]current bounding box.center)},scale=1.7,auto=left,black_vertex/.style={circle,draw,fill,scale=0.75}]
	\node (1) at (90:1) [black_vertex][label=right:${1}$]{};
	\node (2) at (90-72:1) [black_vertex][label=right:${2}$]{};
	\node (3) at (90-72*2:1) [black_vertex][label=right:${3}$]{};
	\node (4) at (90-72*3:1) [black_vertex][label=left:${4}$]{};
	\node (5) at (90-72*4:1) [black_vertex][label=left:${5}$]{};
	\draw[arrows={-Stealth[scale=1.2] Stealth[scale=1.2]}] (1)--(3);
	\draw[arrows={-Stealth[scale=1.2] Stealth[scale=1.2]}] (1)--(4);
	\draw[arrows={-Stealth[scale=1.2]}] (2)--(1);
	\draw[arrows={-Stealth[scale=1.2] Stealth[scale=1.2]}] (2)--(5);
	\draw[arrows={-Stealth[scale=1.2] Stealth[scale=1.2] Stealth[scale=1.2]}] (3)--(2);
	\draw[arrows={-Stealth[scale=1.2] Stealth[scale=1.2]}] (3)--(5);
	\draw[arrows={-Stealth[scale=1.2] Stealth[scale=1.2] Stealth[scale=1.2]}] (4)--(3);
	\draw[arrows={-Stealth[scale=1.2]}] (5)--(1);
	\draw[arrows={-Stealth[scale=1.2]}] (5)--(4);
	\end{tikzpicture}
	\longmapsto
	\begin{tikzpicture}
	[baseline={([yshift=-.5ex]current bounding box.center)},scale=1.7,auto=left,black_vertex/.style={circle,draw,fill,scale=0.75}]
	\node (1) at (90:1) [black_vertex][label=right:${1}$]{};
	\node (2) at (90-72:1) [black_vertex][label=right:${2}$]{};
	\node (3) at (90-72*2:1) [black_vertex][label=right:${3}$]{};
	\node (4) at (90-72*3:1) [black_vertex][label=left:${4}$]{};
	\node (5) at (90-72*4:1) [black_vertex][label=left:${5}$]{};
	\draw[arrows={-Stealth[scale=1.2] Stealth[scale=1.2]}] (2)--(4);
	\draw[arrows={-Stealth[scale=1.2] Stealth[scale=1.2]}] (2)--(5);
	\draw[arrows={-Stealth[scale=1.2]}] (3)--(2);
	\draw[arrows={-Stealth[scale=1.2] Stealth[scale=1.2]}] (3)--(1);
	\draw[arrows={-Stealth[scale=1.2] Stealth[scale=1.2] Stealth[scale=1.2]}] (4)--(3);
	\draw[arrows={-Stealth[scale=1.2] Stealth[scale=1.2]}] (4)--(1);
	\draw[arrows={-Stealth[scale=1.2] Stealth[scale=1.2] Stealth[scale=1.2]}] (5)--(4);
	\draw[arrows={-Stealth[scale=1.2]}] (1)--(2);
	\draw[arrows={-Stealth[scale=1.2]}] (1)--(5);
	\end{tikzpicture},
	\end{align*}
	the triple $\gamma_T := (B , m, \sigma)$ is a mutation loop for all $T$.
	This quiver $Q$ is an example of a period $1$ quiver in~\cite{FordyMarsh}.
	The cluster transformation for $\gamma_T$ is given by
	\begin{align*}
		\mu_{\gamma_T} (y_1 , y_2,y_3,y_4 , y_5) = \underbrace{\mu \circ \mu \circ \dots \circ \mu}_{T} (y_1 , y_2,y_3,y_4 , y_5) ,
	\end{align*}
	where $\mu$ is the cluster transformation of $\gamma_1$:
	\begin{align*}
		\mu (y_1 , y_2,y_3,y_4 , y_5)= 
		\left( 	y_2 (y_1+1) , \frac{ y_1^{2}  y_3 }{ (y_1+1)^{2} }  ,  
		\frac{ y_1^{2} y_4}{(y_1+1)^{2}} ,  y_5(y_1+1)  , \frac{1}{y_1} \right).
	\end{align*}
\end{example}

\section{Mutation networks}\label{section: mutation network}
\subsection{Definition of mutation networks}
\label{subsection: mutation network}
For any mutation sequence $\gamma$, we can construct a combinatorial object called mutation network, we will denote it by $\network_\gamma$.

Consider the sets defined by
\begin{align*}
	&\overline{\mathcal{E}}(t) = \{ (i,t) \mid 1\leq i \leq n  \}, \\
	&\overline{\mathcal{E}} = \bigcup_{0\leq t \leq T }\overline{\mathcal{E}}(t) . 
\end{align*}
We define the equivalence relation $\sim$ on $\overline{\mathcal{E}} $ as follows:
\begin{enumerate}
	\item For all $t=1 ,\dots ,T$, $(i,t-1) \sim (\sigma_t(i),t)$ if $i \neq m_t$.
	\item For all $i = 1 \dots, n$, $(i,T) \sim (i,0)$.
\end{enumerate}
Let $\mathcal{E} = \overline{\mathcal{E}} / \sim$ be the corresponding quotient set.

First, we define the graphs $\overline{\network}_{\gamma}(t)$ for all $t=1 ,\dots, T$.
These consist of two types of vertices: black vertices and a single square vertex.
The black vertices are labeled using elements of $\overline{\mathcal{E}}(t-1) \cup \overline{\mathcal{E}}(t)$, and represented as solid circles $\edgevertex$, while the square vertex is labeled $t$, and represented as a hollow square $\tetvertex$.
There are three types of edges joining these vertices:
broken edges, arrows from the square vertex to a black vertex, and arrows from a black vertex to the square vertex.
These edges are added as follows.
First, we add broken edges between the square vertex and black vertices labeled $(m_t,t-1)$ or $(\sigma_t(m_t),t)$.
Then, for all $i=1,\dots, n$, we add $B_{m_t,i}(t-1)$ arrows from the square vertex to the black vertex labeled $(i,t-1)$ if  $B_{m_t,i}(t-1)>0$.
Finally, for all $i=1,\dots, n$, we add $-B_{m_t,i}(t-1)$ arrows from the black vertex labeled $(i,t-1)$ to the square vertex if  $B_{m_t,i}(t-1)<0$.
These rules can be summarized as follows:
	\begin{enumerate}
		\item $\defnp$ if $(i,s) = (m_t,t-1)$ or $(\sigma_t(m_t),t)$,
		\item $\defn$ if $a=B_{m_t,i}(t-1)>0$ and $s = t-1$, for all $i$,
		\item $\defnpp$ if $a=B_{m_t,i}(t-1)<0$ and $s = t-1$, for all $i$.
	\end{enumerate}

Let $ \overline{\network}_\gamma = \cup_{1 \leq t \leq T} \overline{\network}_\gamma (t)$.
Then the mutation network $\network_{\gamma}$ of the mutation sequence $\gamma$ is the quotient of $\overline{\network}_\gamma$ by $\sim$:
\begin{align}
	\network_{\gamma} = \overline{\network}_\gamma / \sim.
\end{align}
Here, the quotient carried out such that the vertices are labeled by elements of $\mathcal{E}$, and all edges are included (i.e., we do not cancel the arrows pointing in the opposite direction).

\begin{example}\label{example: A2}
	Define the skew-symmetric matrix $B$ by
	\[
		B=
		\begin{pmatrix}
		0 & -1 \\
		1 & 0  
		\end{pmatrix}.
	\]
	Let $\gamma = (B, m, \sigma)$ be a mutation loop, where $m=(1,2)$ and $\sigma = (\id , \id)$.
	If we identify skew-symmetric matrices with quivers, the quiver transitions are given by
	\[
	\begin{tikzpicture}
	[scale=0.9]
	\node (1) at (0,0) [] {\underline{1}};
	\node (2) at (1,0) [] {2};
	\draw[arrows={-Stealth[scale=1.1]}] (2)--(1);
	\node (mu1) at (2.5,-0.1) [] {$\longrightarrow$};
	\node (mu1') at (2.5,0.2) [] {$\mu_1$};
	\node (1') at (4,0) [] {1};
	\node (2') at (5,0) [] {\underline{2}};
	\draw[arrows={-Stealth[scale=1.1]}] (1')--(2');
	\node (mu1) at (6.5,-0.1) [] {$\longrightarrow$};
	\node (mu1') at (6.5,0.2) [] {$\mu_2$};
	\node (1'') at (8,0) [] {1};
	\node (2'') at (9,0) [] {2};
	\draw[arrows={-Stealth[scale=1.1]}] (2'')--(1'');
	\end{tikzpicture},
	\]
	where the underlines indicate the mutated vertices.
	The graphs $\overline{\network}_\gamma (t)$ can then be given as
	\begin{align*}
	\overline{\network}_{\gamma}(1)=
	\begin{tikzpicture}
	[baseline={([yshift=-.5ex]current bounding box.center)},scale=0.9,auto=left,black_vertex/.style={circle,draw,fill,scale=0.75},square_vertex/.style={rectangle,scale=0.8,draw}]
	\node (a) at (0,0) [black_vertex][label=above:${(1,0)}$]{};
	\node (t1) at (0,-1) [square_vertex][label=right:$1$]{};
	\node (ap) at (0,-2) [black_vertex][label=below:${(1,1)}$]{};
	\node (bp) at (1,-2) [black_vertex][label=below:${(2,1)}$]{};
	\node (b) at (1,0) [black_vertex][label=above:${(2,0)}$]{}; 
	\draw [dashed] (a)--(t1);
	\draw [dashed] (t1)--(ap);
	\draw[arrows={-Stealth[scale=1.2]}] (b)--(t1);
	\end{tikzpicture}, \quad
	\overline{\network}_{\gamma}(2) = 
	\begin{tikzpicture}
	[baseline={([yshift=-.5ex]current bounding box.center)},scale=0.9,auto=left,black_vertex/.style={circle,draw,fill,scale=0.75},square_vertex/.style={rectangle,scale=0.8,draw}]
	\node (ap) at (-1,0) [black_vertex][label=above:${(1,1)}$]{};
	\node (b) at (0,0) [black_vertex][label=above:${(2,1)}$]{}; 
	\node (a) at (-1,-2) [black_vertex][label=below:${(1,2)}$]{}; 
	\node (bp) at (0,-2) [black_vertex][label=below:${(2,2)}$]{}; 
	\node (t2) at (0,-1) [square_vertex][label=right:$2$]{};
	\draw [dashed] (b)--(t2);
	\draw [dashed] (t2)--(bp);
	\draw[arrows={-Stealth[scale=1.2]}] (ap)--(t2);
	\end{tikzpicture}.
	\end{align*}
	Let $\mathcal{E} = \{ e_1 , e_2 ,e_3 \} $
	be the set of black vertices in $\network_\gamma$,
	where
	\begin{align*}
	&e_1 = \{ (1,0), (1,1) ,(1,2) \}, \\
	&e_2 = \{ (2,0), (2,1) ,(2,2) \}.
	\end{align*}
	The mutation network of $\gamma$ is then given by
	\begin{align}\label{eq: A2 network}
	\network_{\gamma} = 
	\;
	\begin{tikzpicture}
	[baseline={([yshift=-.5ex]current bounding box.center)},scale=1.4,auto=left,black_vertex/.style={circle,draw,fill,scale=0.75},square_vertex/.style={rectangle,scale=0.8,draw}]
	\node (a) at (0,0) [black_vertex][label=right:$e_1$]{};
	\node (b) at (1,0) [black_vertex][label=right:$e_2$]{};
	\node (t1) at (0,-1) [square_vertex][label=right:$1$]{};
	\node (t2) at (1,-1) [square_vertex][label=right:$2$]{};
	\draw [dashed] (a)edge [bend left=15](t1);
	\draw [dashed] (t1)edge [bend left=15](a);
	\draw [dashed] (b)edge [bend left=15](t2);
	\draw [dashed] (t2)edge [bend left=15](b);
	\draw[arrows={-Stealth[scale=1.2]}] (b)--(t1);
	\draw[arrows={-Stealth[scale=1.2]}] (a)--(t2);
	\end{tikzpicture}.
	\end{align}
\end{example}

\subsection{Neumann-Zagier matrices of a mutation network}
\begin{definition}
	Let $\gamma$ be a mutation sequence and $\network_\gamma$ be its mutation network.
	For any given pair consisting of a black vertex labeled $e \in \mathcal{E}$ and a square vertex labeled $t \in \{ 1, \dots , T  \}$, 
	let $n_{et}^{0}$ be the number of broken lines between $e$ and $t$,
	$n_{et}^{+}$ be the number of arrows from $t$ to $e$,
	and $n_{et}^{-}$ be the number of arrows from $e$ to $t$:
	\begin{align}
	n_{et}^{0} &= \# \left\{ \defNp  \right\}, \\
	n_{et}^{+} &= \# \left\{ \defN  \right\}, \\
	n_{et}^{-} &= \# \left\{ \defNpp  \right\}.
	\end{align}
	Let $N_0$ be the $\mathcal{E} \times T$ matrix whose $(e,t)$-entry is $n_{et}^{0} $.
	Define the $\mathcal{E} \times T$ matrices $N_+$ and $N_-$ likewise.
	We say that the matrices $N_0$, $N_+$, and $N_-$ are the \emph{adjacency matrices} of the mutation network $\network_\gamma$.
\end{definition}

\begin{definition}
	Let $N_0$, $N_+$, and $N_-$ be the adjacency matrices of the mutation network $\network_\gamma$.
	Define two additional $\mathcal{E} \times T$ matrices $A_+ =(a_{et}^+)$ and $A_-  =(a_{et}^-)$ by
	\begin{align}
	\label{eq: def of A+}
		&A_+ = N_0 - N_+ , \\
	\label{eq: def of A-}
		&A_- = N_0 - N_- .
	\end{align}
	We call the matrices $A_+$ and $A_-$ the \emph{Neumann-Zagier matrices} of the mutation sequence $\gamma$ (or of the mutation sequence $\gamma$).
	We also define $A_0 =(a_{et}^0)$ by $A_0 = N_0$.
	For a given sequence $\boldsymbol{\varepsilon} = (\varepsilon_1 , \dots, \varepsilon_T) \in \{ \pm,0 \}^T$, let $A_{\boldsymbol{\varepsilon}}$ be the $\mathcal{E} \times T$ matrix whose $t$-th column is the $t$-th column in $A_{\varepsilon_t}$.
	We call $A_{\boldsymbol{\varepsilon}}$ the \emph{$\boldsymbol{\varepsilon}$-signed Neumann-Zagier matrix} of the mutation sequence $\gamma$.
\end{definition}

\begin{example}\label{example: A2 matrix}
	Let $\gamma$ be the mutation loop defined in Example \ref{example: A2}.
	It follows from $\eqref{eq: A2 network}$ that the adjacency matrices of the mutation network $\network_\gamma$ are given by
	\begin{align}
	N_0 = 
	\begin{pmatrix}
	2 & 0 \\
	0 & 2 
	\end{pmatrix},
	N_+ = 
	\begin{pmatrix}
	0 & 0 \\
	0 & 0
	\end{pmatrix},
	N_- = 
	\begin{pmatrix}
	0 & 1 \\
	1 & 0 
	\end{pmatrix},
	\end{align}
	and the Neumann-Zagier matrices of the mutation loop $\gamma$ are given by
	\begin{align}
	A_+ = 
	\begin{pmatrix}
	2 & 0 \\
	0 & 2
	\end{pmatrix},
	A_- = 
	\begin{pmatrix}
	2 & -1  \\
	-1 & 2 
	\end{pmatrix}.
	\end{align}
\end{example}

Next, we give some lemmas relating to the adjacency matrices of mutation networks.
Here, $\delta$ is the Kronecker delta.
\begin{lemma}\label{lemma: n0}
	For each $e \in \mathcal{E}$ and $t=1,\dots, T$,
	\begin{align}\label{eq: n^0}
		n_{et}^0 =  \sum_{(i,u) \in e} \delta_{i,m_t} \delta_{u,t-1} + \delta_{i,\sigma_t(m_t)} \delta_{tu}.
	\end{align}
	In particular, if $e$ is the equivalence class of $(m_s ,s-1)$ for some $s=1,\dots, T$,
	we have
	\begin{align}
		n_{et}^0 =  \delta_{st} + \sum_{ (i,u) \in e} \delta_{i,\sigma_t(m_t)} \delta_{tu}.
	\end{align}
\end{lemma}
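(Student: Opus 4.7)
The lemma is essentially a direct unpacking of the definition of the mutation network, together with one small structural observation about the equivalence relation $\sim$.

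I would first establish \eqref{eq: n^0} by tracing the definition. Each square vertex $t$ appears only in the piece $\overline{\network}_\gamma(t)$ and is not identified with any other vertex under $\sim$, so every broken edge incident to $t$ in $\network_\gamma$ descends from a broken edge in $\overline{\network}_\gamma(t)$. By rule~(1) of the construction, the broken edges at $t$ in $\overline{\network}_\gamma(t)$ connect $t$ to exactly the two black vertices $(m_t, t-1)$ and $(\sigma_t(m_t), t)$, and these are distinct because their second coordinates differ. Upon passing to the quotient, each such broken edge becomes a broken edge between $t$ and the equivalence class containing the corresponding vertex. Hence $n_{et}^0$ equals the number of representatives of $e$ lying in $\{(m_t, t-1), (\sigma_t(m_t), t)\}$, which rewrites as the right-hand side of \eqref{eq: n^0}.

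For the ``in particular'' part, assume $e$ is the class of $(m_s, s-1)$. It suffices to show that $(m_t, t-1) \in e$ forces $t = s$, since then the first sum in \eqref{eq: n^0} collapses to $\delta_{st}$. I would work on the quotient of $\overline{\mathcal{E}}$ by the cyclic identification $(i, T) \sim (i, 0)$, regarded as a graph whose edges are the remaining generating pairs $(i, u-1) \sim (\sigma_u(i), u)$ with $u$ now taken modulo $T$. Each vertex $(i, u)$ has at most one forward neighbor $(\sigma_{u+1}(i), u+1)$, defined precisely when $i \neq m_{u+1}$, and symmetrically at most one backward neighbor. Call a vertex \emph{forward-dead} when its forward neighbor is undefined; these are exactly the vertices of the form $(m_r, r-1)$. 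Because the graph has maximum degree two, each connected component---i.e.\ each equivalence class in $\mathcal{E}$---is either a simple cycle or a simple path, and in the latter case contains exactly one forward-dead vertex, located at its forward endpoint.

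Applied to $e$: since $e$ contains the forward-dead vertex $(m_s, s-1)$, it must be a path and $(m_s, s-1)$ is its unique forward-dead vertex. Any $(m_t, t-1) \in e$ therefore coincides with $(m_s, s-1)$, forcing $t = s$. The main point to treat carefully is verifying that the cyclic identification does not introduce additional forward or backward neighbors at $u = 0$ or $u = T$ beyond those provided by the generating relations; once this bookkeeping is in place, the rest of the argument is entirely elementary.
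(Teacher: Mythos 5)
Your proof is correct and follows essentially the same route as the paper: the first identity is obtained by the same unpacking of the broken-edge rule into the two indicator contributions from $(m_t,t-1)$ and $(\sigma_t(m_t),t)$, and the ``in particular'' part rests on the same fact that $(m_t,t-1)\in[(m_s,s-1)]$ forces $t=s$. The only difference is that the paper merely asserts this last fact in the proof of the lemma, whereas you justify it via the path/cycle structure of the identification graph --- which is exactly the argument the paper itself uses later (for the graph $G_{\alpha_{m,\sigma}}$) in Lemma \ref{lemma: nilpotent}.
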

\begin{proof}
	It follows from the rule for drawing broken lines that $n _{et}^0 = x_{et} + y_{et}$ where
	\begin{align*}
		&x_{et} = 
		\begin{cases}
			1 & \text{if $(m_t, t-1) \in e$}, \\
			0 & \text{otherwise}, \\
		\end{cases} \\
		&y_{et} = 
		\begin{cases}
			1 & \text{if $(\sigma_t(m_t), t) \in e$}, \\
			0 & \text{otherwise}. \\
		\end{cases}
	\end{align*}
	The first part of the lemma now follows from the fact that
	\begin{align*}
		&x_{et} =\sum_{(i,u) \in e} \delta_{i,m_t} \delta_{u,t-1}, \\
		&y_{et}=  \sum_{ (i,u) \in e} \delta_{i,\sigma_t(m_t)} \delta_{tu}.
	\end{align*}
	Since $(m_t , t-1) \in  [(m_s , s-1)]$ if and only if $s=t$, we obtain $x_{et} = \delta_{st}$ when $e =  [(m_s , s-1)]$.
\end{proof}

\begin{lemma}\label{lemma: n epsilon}
	For all $\varepsilon \in \{ \pm \}$, $e \in \mathcal{E}$, and $t=1 ,\dots, T$,
	\begin{align}
	\label{eq: n^epsilon 1}
	n_{et}^{\varepsilon} 
	&= \sum_{ (i,u) \in e}
	\maxzero{ \varepsilon B_{m_t ,i}(t-1)} \delta_{t,u+1} \\
	\label{eq: n^epsilon 2}
	&= \sum_{ (i,u) \in e}
	\maxzero{ \varepsilon B_{m_t ,\sigma_t^{-1}(i)}(t-1)} \delta_{tu}.
	\end{align}
\end{lemma}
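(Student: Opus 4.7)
The plan is to unwind the definition of the mutation network in two complementary ways. By rules (2) and (3) of Section \ref{subsection: mutation network}, inside the subgraph $\overline{\network}_\gamma(t)$ the square vertex $t$ is joined to the black vertex labeled $(i,t-1)$ by exactly $\maxzero{B_{m_t,i}(t-1)}$ arrows directed from $t$ to $(i,t-1)$ and by exactly $\maxzero{-B_{m_t,i}(t-1)}$ arrows directed from $(i,t-1)$ to $t$, for each $1 \le i \le n$; no arrow of $\overline{\network}_\gamma$ incident to the square vertex $t$ arises anywhere else. After passing to the quotient, the black vertex $(i,t-1)$ is absorbed into its class $e \in \mathcal{E}$, so the total number of $\varepsilon$-arrows between $e$ and $t$ equals the sum of $\maxzero{\varepsilon B_{m_t,i}(t-1)}$ over those representatives $(i,u) \in e$ whose second coordinate satisfies $u = t-1$. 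This gives \eqref{eq: n^epsilon 1}.

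For \eqref{eq: n^epsilon 2}, I would reindex this same sum using the equivalence $(i,t-1) \sim (\sigma_t(i),t)$, which holds whenever $i \ne m_t$. Setting $j = \sigma_t(i)$, so that $i = \sigma_t^{-1}(j)$, converts a summand indexed by $(i,t-1) \in e$ into $\maxzero{\varepsilon B_{m_t,\sigma_t^{-1}(j)}(t-1)}$ indexed by $(j,t) \in e$. The pairs for which this bijection between representatives breaks down are precisely $(m_t, t-1)$, if it lies in $e$, and $(\sigma_t(m_t), t)$, if it lies in $e$; these two are not identified by $\sim$ and must be treated separately. However, both exceptional summands vanish because $B_{m_t, m_t}(t-1) = 0$ (skew-symmetrizability kills the diagonal entry), so no harm is done by extending the sum on the right to all $(i,u) \in e$ with $u = t$, and by the same token the restriction in \eqref{eq: n^epsilon 1} to $i \ne m_t$ is automatic. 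The two sides therefore agree.

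The main obstacle is exactly this bookkeeping of the exceptional pairs $(m_t, t-1)$ and $(\sigma_t(m_t), t)$, which are excluded from the equivalence that underlies the change of index. The argument goes through smoothly only because $B_{m_t, m_t}(t-1)$ automatically equals zero, which lets both formulas be written as an unrestricted sum over $(i,u) \in e$ without having to enforce $i \ne m_t$ or $i \ne \sigma_t(m_t)$ explicitly.
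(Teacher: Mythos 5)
Your proof is correct and follows essentially the same route as the paper: read off \eqref{eq: n^epsilon 1} from the arrow-drawing rules of $\overline{\network}_\gamma(t)$, then reindex the sum via the identification $(i,t-1)\sim(\sigma_t(i),t)$ for $i\neq m_t$, with the exceptional summands killed by $B_{m_t,m_t}(t-1)=0$. One minor wording caveat: $(m_t,t-1)$ and $(\sigma_t(m_t),t)$ may well lie in the same class $e$ through other identifications (they are merely not related by the step-$t$ relation you use for the reindexing), but since you treat each of them separately and both of their summands vanish, this does not affect the argument.
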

\begin{proof}
	For simplicity, we assume that $\varepsilon = +$;
	the proof for $\varepsilon = -$ is similar.
	In the graph $\overline{\network}_\gamma$,
	there are
	\begin{align*}
		\maxzero{ B_{m_t , i} (t-1) }
	\end{align*}
	arrows from the square vertex labeled $t$ to the black vertex labeled $(i,t-1)$, so we have
	\begin{align*}
		n_{et}^+ = \sum_{ (i',u') \in e}
			\maxzero{  B_{m_t ,i'}(t-1)} \delta_{t,u'+1}.
	\end{align*}
	For any element $(i', u') \in \overline{\mathcal{E}}$ such that $u' \neq T$, we can set $(i,u) = (\sigma_{u' + 1}(i') , u' +1)$, so $(i',u') \sim (i,u)$ unless $i' = m_u$.
	Thus, we can compute $n_{et}^+$ as follows:
	\begin{align*}
		n_{et}^+ &= \sum_{ (i',u') \in e}
		\maxzero{  B_{m_t ,i'}(t-1)} \delta_{t,u'+1} \\
		&=\sum_{ \substack{(i',u') \in e \\ i' \neq m_t }}
		\maxzero{  B_{m_t ,i'}(t-1)} \delta_{t,u'+1} \\
		&=\sum_{ \substack{(i,u) \in e \\ i \neq \sigma_t^{-1} (m_t) }}
		\maxzero{  B_{m_t ,\sigma_t^{-1}(i)} (t-1)} \delta_{tu} \\
		&=\sum_{(i,u) \in e  }
		\maxzero{  B_{m_t ,\sigma_t^{-1}(i)} (t-1)} \delta_{tu} .
	\end{align*}
\end{proof}

\subsection{Fully mutated mutation sequences}\label{subsection: fully mutated}
We say that the mutation sequence $\gamma=(B,m,\sigma)$ is \emph{fully mutated} if, for any $e \in \mathcal{E}$, there exists a $t \in \{ 1, \dots , T \}$ such that $(m_t , t-1) \in e$.
When $\sigma_t = \id$ for all $t=1 ,\dots, T$, the mutation sequence $\gamma$ is fully mutated if and only if $\{ m_1 ,\dots , m_T \} = \{ 1, \dots, n \}$.

For a given permutation $\sigma$ of the set $\{ 1, \dots, n \}$, we define its permutation matrix by $P_\sigma = (\delta_{i\sigma(j)})_{1 \leq i,j \leq n}$.
For all $k = 1, \dots , n$, let $H_k$ be the $n \times n$ diagonal matrix that is identical to the identity matrix except that the $k$-th entry is $0$.
Define the block matrix $\alpha_{m,\sigma}$ by
\begin{align}\label{eq: def of alpha}
\alpha_{m,\sigma} =
\begin{pmatrix}
O &  \cdots & O &  P_{\sigma_1} H_{m_1} \\
P_{\sigma_2} H_{m_2} & \cdots & O & O \\
\vdots &  \ddots & \vdots & \vdots \\
O  &  \cdots &  P_{\sigma_T} H_{m_T} &O \\
\end{pmatrix}.
\end{align}
The matrix $\alpha_{m,\sigma}$ is a $T \times T$ block matrix whose blocks are $n \times n$ matrices.
In the following, we assume any references to its $0$-th block mean its $T$-th block.
Let $(i,t)$ denote the $i$-th index in the $t$-th block.

Let $G_{\alpha_{m,\sigma}}$ be the graph whose adjacency matrix is $\alpha_{m,\sigma}$, that is, there is an arrow from $(i,s)$ to $(j,t)$ if $(\alpha_{m,\sigma})_{(i,s),(j,t)} = 1$, but not if $(\alpha_{m,\sigma})_{(i,s),(j,t)} = 0$.
Such graphs consist of directed paths and directed cycles, and an example is shown in Figure \ref{fig: G alpha}.
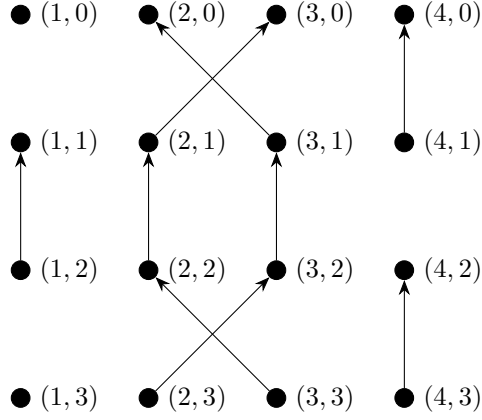
\begin{figure}
	\centering
	\begin{tikzpicture}
	[scale=1.7,auto=left,black_vertex/.style={circle,draw,fill,scale=0.75},square_vertex/.style={rectangle,scale=0.8,draw}]
	\node (10) at (1,0) [black_vertex][label=right:${(1,0)}$]{};
	\node (20) at (2,0) [black_vertex][label=right:${(2,0)}$]{};
	\node (30) at (3,0) [black_vertex][label=right:${(3,0)}$]{};
	\node (40) at (4,0) [black_vertex][label=right:${(4,0)}$]{};
	\node (11) at (1,-1) [black_vertex][label=right:${(1,1)}$]{};
	\node (21) at (2,-1) [black_vertex][label=right:${(2,1)}$]{};
	\node (31) at (3,-1) [black_vertex][label=right:${(3,1)}$]{};
	\node (41) at (4,-1) [black_vertex][label=right:${(4,1)}$]{};
	\node (12) at (1,-2) [black_vertex][label=right:${(1,2)}$]{};
	\node (22) at (2,-2) [black_vertex][label=right:${(2,2)}$]{};
	\node (32) at (3,-2) [black_vertex][label=right:${(3,2)}$]{};
	\node (42) at (4,-2) [black_vertex][label=right:${(4,2)}$]{};
	\node (13) at (1,-3) [black_vertex][label=right:${(1,3)}$]{};
	\node (23) at (2,-3) [black_vertex][label=right:${(2,3)}$]{};
	\node (33) at (3,-3) [black_vertex][label=right:${(3,3)}$]{};
	\node (43) at (4,-3) [black_vertex][label=right:${(4,3)}$]{};
	\draw[arrows={-Stealth[scale=1.2]}] (41)--(40);
	\draw[arrows={-Stealth[scale=1.2]}] (31)--(20);
	\draw[arrows={-Stealth[scale=1.2]}] (21)--(30);
	\draw[arrows={-Stealth[scale=1.2]}] (12)--(11);
	\draw[arrows={-Stealth[scale=1.2]}] (22)--(21);
	\draw[arrows={-Stealth[scale=1.2]}] (32)--(31);
	\draw[arrows={-Stealth[scale=1.2]}] (33)--(22);
	\draw[arrows={-Stealth[scale=1.2]}] (23)--(32);
	\draw[arrows={-Stealth[scale=1.2]}] (43)--(42);
	\end{tikzpicture}
	\caption{Example graph $G_{\alpha_{m,\sigma}}$, where $m = (1,4,1)$ and $ \sigma=( (2 \; 3) , \id , (2 \; 3)) $. Here, we identify $(i,3)$ with $(i,0)$ for all $i=1,2,3,4$. There are four connected components, two directed paths and two directed cycles.}
	\label{fig: G alpha}
\end{figure}

\begin{lemma}\label{lemma: nilpotent}
	For a mutation sequence $\gamma = (B,m,\sigma)$ of length $T$,
	The following four statements are equivalent.
	\begin{enumerate}
		\item $\gamma$ is fully mutated.
		\item $\network_\gamma$ has $T$ black vertices.
		\item $\alpha_{m,\sigma}$ is nilpotent; that is, there exists a positive integer $k$ such that $(\alpha_{m,\sigma})^k = 0$.
		\item $G_{\alpha_{m,\sigma}}$ has no directed cycles.
	\end{enumerate}
\end{lemma}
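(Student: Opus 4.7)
The plan is to translate all four conditions into statements about the graph $G_{\alpha_{m,\sigma}}$. The starting observation is that from the block form \eqref{eq: def of alpha} one reads off directly that an arrow from $(i,t)$ to $(j,t-1)$ exists in $G_{\alpha_{m,\sigma}}$ (with $t$ interpreted cyclically modulo $T$) if and only if $i = \sigma_t(j)$ and $j \neq m_t$. In particular, every vertex has in-degree and out-degree at most $1$, so each weakly connected component of $G_{\alpha_{m,\sigma}}$ is either a directed path or a directed cycle.

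Comparing the arrow-condition above with the two rules defining $\sim$ on $\overline{\mathcal{E}}$, together with the identification $(i,T) \sim (i,0)$ (which is exactly what is encoded by the top-right block $P_{\sigma_1} H_{m_1}$ of $\alpha_{m,\sigma}$), shows that the weakly connected components of $G_{\alpha_{m,\sigma}}$ coincide with the equivalence classes in $\mathcal{E}$, and hence with the black vertices of $\network_\gamma$. A direct degree computation yields that $(j,t-1)$ has in-degree $0$ in $G_{\alpha_{m,\sigma}}$ if and only if $j = m_t$; thus the sources of $G_{\alpha_{m,\sigma}}$ are exactly the $T$ vertices $(m_1,0), (m_2,1), \ldots, (m_T,T-1)$, one for each $t \in \{1,\dots,T\}$.

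With this dictionary in hand, the equivalences follow quickly. For (i) $\iff$ (iv): $\gamma$ is fully mutated iff every equivalence class contains some $(m_t,t-1)$, iff every component of $G_{\alpha_{m,\sigma}}$ contains a source, iff every component is a path (rather than a cycle). For (ii) $\iff$ (iv): each path has exactly one source and cycles have none, so the number of path-components is always exactly $T$; hence the total number of components of $G_{\alpha_{m,\sigma}}$, which equals the number of black vertices of $\network_\gamma$, is $T$ if and only if $G_{\alpha_{m,\sigma}}$ contains no directed cycles. Finally, (iii) $\iff$ (iv) is a standard fact about adjacency matrices of finite directed graphs: since the $((i,s),(j,t))$-entry of $(\alpha_{m,\sigma})^k$ counts directed walks of length $k$, nilpotency of $\alpha_{m,\sigma}$ is equivalent to the absence of arbitrarily long walks, which (in a finite graph with in- and out-degree at most $1$) is equivalent to the absence of directed cycles.

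I do not anticipate any serious obstacle. The only subtlety is the bookkeeping of the cyclic identification $t \equiv t+T$, which is handled uniformly by the placement of $P_{\sigma_1} H_{m_1}$ at the top-right corner of $\alpha_{m,\sigma}$ and matches the corresponding cyclic identification $(i,T) \sim (i,0)$ in the definition of $\mathcal{E}$.
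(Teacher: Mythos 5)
Your proposal is correct and follows essentially the same route as the paper: identify the connected components of $G_{\alpha_{m,\sigma}}$ with the equivalence classes in $\mathcal{E}$ (i.e.\ the black vertices of $\network_\gamma$), observe that the components are directed paths or cycles whose sources are exactly the $T$ vertices $(m_t,t-1)$, and conclude (iii)$\Leftrightarrow$(iv) by the standard walk-counting argument for nilpotency. The only differences are cosmetic -- you verify the adjacency structure of $\alpha_{m,\sigma}$ more explicitly and chain the equivalences through (iv) rather than through (ii) -- so no further changes are needed.
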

\begin{proof}
	It follows from the definition of the equivalence relation on $\overline{\mathcal{E}}$ that $(i,s)$ and $(j,t)$ lie in the same component of $G_{\alpha_{m,\sigma}}$ if and only if $(i,s)$ and $(j,t)$ belong to the same equivalence class in $\mathcal{E}$.
	Thus, we can identify the set of connected components of $G_{\alpha_{m,\sigma}}$ with $\mathcal{E}$.
	
	The map
	$\{ 1, \dots , T \} \to \mathcal{E}$
	that sends $t$ to $[( m_t , t-1 )]$ is injective, because $( m_t , t-1 )$ is the starting point of some directed path in $G_{\alpha_{m,\sigma}}$.
	It is also surjective if and only if $\gamma$ is fully mutated, showing that $\text{(i)}  \Leftrightarrow \text{(ii)}$.
	All directed paths start with $(m_t , t-1)$ for some $1\leq t \leq T$, implying that there are a total of $T$ directed paths,
	and hence that $\text{(ii)}  \Leftrightarrow \text{(iv)}$.
	Finally, since $G_{\alpha_{m,\sigma}}$ consists of directed paths and directed cycles, we have $\text{(iii)}  \Leftrightarrow \text{(iv)}$.
\end{proof}

If $\gamma$ is fully-mutated, the map
$\{ 1, \dots , T \} \to \mathcal{E}$
that sends $t$ to $[( m_t , t-1 )]$ is a bijection.
Using this bijection, we assume that $\mathcal{E} \times T$ matrices such as $A_+$ and $A_-$ are $T \times T$ matrices when $\gamma$ is fully mutated.

\subsection{Gluing equations of mutation networks}\label{subsection: gluing equation}
In this section we see that the Neumann-Zagier matrices describe the fixed point equation of the cluster transformation.
Let $\gamma$ be a mutation sequence.
For $t=1, \dots , T$, define a rational functions $z_{t,+}, z_{t,-} \in \univsf$ by
\begin{align}\label{eq: def of z}
z_{t,+} =  \frac{Y_{m_t}}{Y_{m_t}+1}  , \; 
z_{t,-}  =  \frac{1}{Y_{m_t}+1}  ,
\end{align}
where we denote $Y_{m_t}(t-1)$ by $Y_{m_t}$.

Define the set $\gespace$ by
\begin{align*}
\gespace = \{ (z_{t,+} , z_{t,-})_{1 \leq t \leq T} \in (\CC \setminus \{ 0,1 \})^{2T}  \mid z_+ + z_- = 1 , z_+^{-A_+} z_-^{A_-} =1  \}
\end{align*}
where $z_+ + z_- = 1$ means $z_{t,+} + z_{t,-} = 1$ for all $t=1, \dots , T$, and $z_+^{-A_+} z_-^{A_-} =1 $ is the shorthand for the following equations:
\begin{align}\label{eq: gluing equation}
\prod_{t=1}^T {z_{t,+}}^{-a_{et}^+ } {z_{t,-}}^{a_{et}^-} =1 \quad\quad \text{for each $e \in \mathcal{E}$.}
\end{align}
We call \eqref{eq: gluing equation} the \emph{gluing equations of the mutation network $\network_\gamma$}.
The solution space $\gespace$ of these gluing equations is an affine algebraic set because
\begin{align*}
&\gespace = \bigl\{  (z_{t,+} ,z_{t,+}^{-1} ,(1-z_{t,+})^{-1},z_{t,-} ,z_{t,-}^{-1} ,(1-z_{t,-})^{-1} )_{1 \leq t \leq T} \in \CC^{6T} \\
&\quad\quad\quad \mid \text{$f_t = g_e =h_{t,\varepsilon} =i_{t,\varepsilon}=0$ for all $t = 1,\dots, T$, $e \in \mathcal{E}$, and $\varepsilon \in \{ \pm \}$} \bigr\},
\end{align*}
where
\begin{align*}
&f_t = z_{t,+} + z_{t,-} -1 , \; g_e = \prod_{t=1}^T {z_{t,+}}^{-a_{et}^+ } {z_{t,-}}^{a_{et}^-} -1 , \\
&h_{t,\varepsilon} = z_{t,\varepsilon} z_{t,\varepsilon}^{-1} -1 ,\; i_{t,\varepsilon} =(1- z_{t,\varepsilon})(1- z_{t,\varepsilon})^{-1} -1 .
\end{align*}

The gluing equations of the mutation network $\network_\gamma$ are related to the fixed-point equation $\mu_\gamma (y) = y$ of the cluster transformation of the mutation sequence $\gamma$.
Let $\fpspace$ be the space of the fixed-points of $\mu_\gamma$ for which $Y_{m_t}(t-1)$ is neither $0$ nor $-1$ for any $t$:
\begin{align*}
\fpspace = \{ \eta= (\eta_1 , \dots , \eta_n) \in \CC  \mid  \text{$\mu_\gamma (\eta) = \eta$ ,$Y_{m_t}|_{y=\eta} \neq 0,-1$ for $t=1,\dots ,T$} \},
\end{align*}
where we denote $Y_{m_t}(t-1)$ by $Y_{m_t}$.
We can define an affine algebraic set structure on $\fpspace$ as in the case of $\gespace$.
\begin{proposition}\label{prop: bijection F D}
	Let $\gamma$ be a fully mutated mutation sequence.
	The map
	\begin{align*}
	\varphi : \fpspace \to \gespace
	\end{align*}
	that sends $\eta \in \fpspace$ to 
	\begin{align*}
	\left. \left( \frac{Y_{m_t}  }{Y_{m_t} +1 }, \frac{1}{Y_{m_t} +1 } \right)_{1 \leq  t \leq T} \right|_{y=\eta}
	\end{align*}
	is an isomorphism between affine algebraic sets.
\end{proposition}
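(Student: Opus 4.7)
The plan is to exhibit an explicit inverse $\psi : \gespace \to \fpspace$ to $\varphi$ and check that both maps are morphisms of affine algebraic sets. The structural input is that, for $\gamma$ fully mutated, each equivalence class $e \in \mathcal{E}$ is a directed path in $G_{\alpha_{m,\sigma}}$ (Lemma~\ref{lemma: nilpotent}) containing a unique vertex of the form $(m_{t(e)}, t(e)-1)$ and a unique vertex of the form $(\sigma_{s(e)}(m_{s(e)}), s(e))$. At these two distinguished vertices the $Y$-values are forced by $z_{t(e),\pm}$ and $z_{s(e),\pm}$ respectively, via $Y_{m_t}(t-1) = z_{t,+}/z_{t,-}$ and $Y_{\sigma_s(m_s)}(s) = Y_{m_s}(s-1)^{-1} = z_{s,-}/z_{s,+}$.

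To see that $\varphi(\eta) \in \gespace$, the relation $z_{t,+} + z_{t,-} = 1$ is immediate and $z_{t,\pm} \notin \{0, 1\}$ follows from the condition $Y_{m_t}(t-1)|_{y=\eta} \ne 0, -1$ built into $\fpspace$. I would then verify the gluing equation $\prod_u z_{u,+}^{-a_{eu}^+} z_{u,-}^{a_{eu}^-} = 1$ logarithmically. The $Y$-mutation rule rewrites as $Y_{\sigma_u(i)}(u) = Y_i(u-1) \cdot z_{u,+}^{\maxzero{B_{m_u,i}(u-1)}} z_{u,-}^{-\maxzero{-B_{m_u,i}(u-1)}}$ for $i \ne m_u$, so by Lemma~\ref{lemma: n epsilon} the sum $\sum_u [n_{eu}^+ \log z_{u,+} - n_{eu}^- \log z_{u,-}]$ telescopes along the path representing $e$ to the log-difference of $Y$ between the two distinguished endpoints; any cyclic steps $(i,T) \sim (i,0)$ encountered along the way contribute $0$ precisely because $\eta$ is a fixed point. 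On the other hand, Lemma~\ref{lemma: n0} gives $\sum_u n_{eu}^0 (\log z_{u,+} - \log z_{u,-}) = \log(z_{t(e),+}/z_{t(e),-}) + \log(z_{s(e),+}/z_{s(e),-})$, which matches the telescoped sum in view of the boundary values above. Subtracting yields the gluing equation.

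For $\psi$, given $(z_{t,\pm}) \in \gespace$ I define $Y_{(i,u)} \in \CC^\times$ on $\overline{\mathcal{E}}$ by setting $Y_{(\sigma_s(m_s), s)} := z_{s,-}/z_{s,+}$ at the distinguished starting vertex of each class and propagating along the path by $Y_{(\sigma_u(i), u)} := Y_{(i,u-1)} \cdot z_{u,+}^{\maxzero{B_{m_u,i}(u-1)}} z_{u,-}^{-\maxzero{-B_{m_u,i}(u-1)}}$ for $i \ne m_u$. The gluing equation for $e$ is exactly the compatibility condition forcing the propagated value at $(m_{t(e)}, t(e)-1)$ to equal $z_{t(e),+}/z_{t(e),-}$, so the assignment descends to $\mathcal{E}$, and in particular $Y_{(i,0)} = Y_{(i,T)}$. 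Set $\eta_i := Y_{(i,0)}$. A straightforward induction on $u$ using the $Y$-mutation rules gives $Y_i(u)|_{y=\eta} = Y_{(i,u)}$ for all $(i,u)$, hence $\mu_\gamma(\eta)_i = Y_i(T)|_{y=\eta} = Y_{(i,0)} = \eta_i$ and $Y_{m_t}(t-1)|_{y=\eta} = z_{t,+}/z_{t,-}$, so $\eta \in \fpspace$, $\varphi(\eta) = (z_{t,\pm})$, and $\varphi \circ \psi = \id$. The identity $\psi \circ \varphi = \id$ follows from the same induction applied to $\eta \in \fpspace$.

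Since $\varphi$ and $\psi$ are defined by subtraction-free rational expressions in the coordinates realizing $\fpspace$ and $\gespace$ as affine algebraic sets, both are morphisms. The main obstacle is the combinatorial bookkeeping that identifies ``$Y$-consistency along the path $e$'' with ``the gluing equation for $e$''; once the telescoped $\log z$-sums along a path are aligned against the $n_{eu}^0, n_{eu}^\pm$ data via Lemmas~\ref{lemma: n0} and~\ref{lemma: n epsilon}, and the cyclic identification $(i,T) \sim (i,0)$ is absorbed using the fixed-point condition at exactly those crossings, the bijection drops out symmetrically in both directions.
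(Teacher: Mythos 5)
Your proposal is correct and follows essentially the same route as the paper: both directions come down to propagating $Y$-values along the path components of $G_{\alpha_{m,\sigma}}$, using Lemmas \ref{lemma: n0} and \ref{lemma: n epsilon} to match the accumulated mutation factors with the Neumann--Zagier data, and your $\psi$ is exactly the paper's explicit monomial formula for $Y_j(t)$ in the $z$'s starting from the distinguished vertex $(\sigma_s(m_s),s)$ with value $z_{s,+}^{-1}z_{s,-}$. You merely phrase the telescoping logarithmically (best read as shorthand for the multiplicative identity, to avoid branch issues over $\CC$) and spell out the inverse-verification that the paper leaves implicit.
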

\begin{proof}
	First, we prove that $\varphi(\eta) \in \gespace$.
	Let $e \in \mathcal{E}$.
	Then there exist $s, s' \in \{ 1, \dots , T \}$ such that $(m_s , s-1 ) , (\sigma_{s'} (m_{s'}) ,s') \in e$, and these $s,s'$ are unique.
	From the mutation rule \eqref{eq: Y mutation}, we have
	\begin{align*}
	Y_{m_s} (s) = Y_{m_{s'}}(s')^{-1} \prod_{\substack{(i,u) \in e \\ u\neq T}} {z_{u+1,+}}^{\maxzero{B_{m_{u+1},i} (u) }} {z_{u+1,-}}^{-\maxzero{-B_{m_{u+1},i} (u) }}.
	\end{align*}
	Since
	\begin{align*}
	\prod_{\substack{(i,u) \in e \\ u\neq T}} {z_{u+1,+}}^{\maxzero{B_{m_{u+1},i} (u) }} {z_{u+1,-}}^{-\maxzero{-B_{m_{u+1},i} (u) }} = \prod_{t=1}^T {z_{t,+}}^{n_{et}^+} {z_{t,-}}^{-n_{et}^-}
	\end{align*}
	by \eqref{eq: n^epsilon 1} and
	\begin{align*}
	Y_{m_s}(s)^{-1} Y_{m_{s'}}(s')^{-1} =  \prod_{t=1}^T {z_{t,+}}^{-n_{et}^0} {z_{t,-}}^{n_{et}^0}
	\end{align*}
	by \eqref{eq: n^0},
	we obtain
	\begin{align*}
	\prod_{t=1}^T {z_{t,+}}^{-a_{et}^+ } {z_{t,-}}^{a_{et}^-} =1.
	\end{align*}
	Thus $\varphi(\eta) \in \gespace$.
	
	Next, we construct the inverse of $\varphi$.
	Since $\gamma$ is fully mutated, for any $1 \leq j \leq n$ and $1 \leq t \leq T$ there must exist a unique $s \in \{1, \dots ,T \}$ such that $(j,t) \sim (\sigma_s(m_s) , s)$.
	From the mutation rule \eqref{eq: Y mutation}, we have
	\begin{align*}
	Y_j(t) = z_{s,+}^{-1}z_{s,-}
	\prod_{i=1}^l {z_{s+i,+}}^{\maxzero{b_i}} {z_{s+i,-}}^{-\maxzero{-b_i}} 
	\end{align*}
	where $l$ is the distance between $(j,t)$ and $(\sigma_s(m_s) , s)$ in the graph $G_{\alpha_{m,\sigma}}$ (see Section \ref{subsection: fully mutated}) and
	\begin{align*}
	b_i = B_{m_{s+i},\sigma_{s+i-1} \cdots \sigma_s(m_s) } (s+i-1).
	\end{align*}
	Here, we have assumed that $0 \leq s+i-1 \leq T-1$ by working modulo $T$.
	This gives the inverse of $\varphi$, completing the proof.
\end{proof}

\begin{corollary}
	The isomorphism $\varphi$ restricts to the bijection
	\begin{align*}
		\varphi : \fpspace^{>0} \to \gespace^{(0,1)},
	\end{align*}
	where $\fpspace^{>0} = \fpspace \cap \R_{>0}^{n}$ and $\gespace^{(0,1)} = \gespace \cap (0,1)^{2T}$.
\end{corollary}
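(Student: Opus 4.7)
The plan is to verify that the isomorphism $\varphi$ of the previous proposition sends the positive real locus to the $(0,1)^{2T}$-locus in each direction. Since the proof of Proposition \ref{prop: bijection F D} has already established $\varphi$ is an isomorphism of affine algebraic sets, the content here is purely about positivity: showing $\varphi(\fpspace^{>0}) \subset \gespace^{(0,1)}$ and $\varphi^{-1}(\gespace^{(0,1)}) \subset \fpspace^{>0}$.

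For the forward inclusion, I would use the fact that each $Y_i(t)$ lies in the universal semifield $\univsf$, and hence is a subtraction-free rational expression in $y_1,\dots,y_n$. Subtraction-free expressions map $\R_{>0}^n$ into $\R_{>0}$, so if $\eta \in \fpspace^{>0}$ then $Y_{m_t}|_{y=\eta}$ is a positive real number for every $t$. Consequently
\begin{align*}
z_{t,+}|_{y=\eta} = \frac{Y_{m_t}}{Y_{m_t}+1}\bigg|_{y=\eta} \in (0,1), \qquad z_{t,-}|_{y=\eta} = \frac{1}{Y_{m_t}+1}\bigg|_{y=\eta} \in (0,1),
\end{align*}
and in particular $z_{t,+} + z_{t,-} = 1$ holds automatically, confirming $\varphi(\eta) \in \gespace^{(0,1)}$.

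For the reverse inclusion, I would invoke the explicit formula for $Y_j(t)$ that appears at the end of the proof of Proposition \ref{prop: bijection F D}: for each pair $(j,t)$ there is a unique $s$ with $(j,t) \sim (\sigma_s(m_s),s)$, and
\begin{align*}
Y_j(t) = z_{s,+}^{-1} z_{s,-} \prod_{i=1}^{l} z_{s+i,+}^{\maxzero{b_i}} z_{s+i,-}^{-\maxzero{-b_i}}.
\end{align*}
If $(z_{t,\pm})_t \in (0,1)^{2T}$, then the right-hand side is a product of strictly positive real numbers, hence positive. Specializing to $t=0$ recovers the components of the preimage $\eta = \varphi^{-1}(z_+,z_-)$, so every $\eta_j > 0$ and $\eta \in \fpspace^{>0}$.

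The two inclusions together with the isomorphism from Proposition \ref{prop: bijection F D} give the desired bijection. I expect no real obstacle, as the argument is essentially a positivity bookkeeping exercise; the only point worth being careful about is to cite the universal-semifield property to conclude positivity of $Y_{m_t}|_{y=\eta}$ in the forward direction, and to use the already-derived closed formula (rather than re-deriving it) in the reverse direction.
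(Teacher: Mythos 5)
Your proposal is correct and is exactly the argument the paper intends: the corollary is stated without proof because it follows immediately from the proof of Proposition \ref{prop: bijection F D}, via positivity of subtraction-free expressions on $\R_{>0}^n$ in one direction and the explicit monomial formula for the inverse of $\varphi$ in the other. Your write-up simply makes this omitted bookkeeping explicit, and both steps check out.
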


The next two sections give examples of mutation sequences in which the solution spaces $\gespace$ play an important role.

\subsection{Surface examples}\label{section: surface examples}
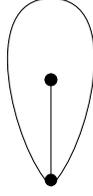
\begin{figure}
	\begin{tikzpicture}[scale=0.5]
	\node (v1) at (-1,-3) {};
	\node at (-1,-6) {};densely dashed
	\draw (-1,-6) .. controls (0,-5) and (1,-1) .. (-1,-1) .. controls (-3,-1) and (-2,-5) .. (-1,-6) node (v2) {};
	\draw[*-*]  (v1.center) edge (v2.center);
	\end{tikzpicture}
	\caption{A self-folded edge.}
	\label{figure: self-folded}
\end{figure}
In this section, we review mutation loops related to hyperbolic 3-manifolds according to~\cite{NagaoTerashimaYamazaki}.
Let $S$ be a closed oriented surface and $P$ be a finite set of points on $S$.
The points in $P$ is called \emph{punctures}.
Let $\Gamma$ be a ideal triangulation of $(S,P)$, that is, a triangulation of $S$ whose vertices are points in $M$.
We assume that $\Gamma$ has no self-folded edges (see Figure \ref{figure: self-folded}).
We define a skew-symmetric matrix $B_{\Gamma}$ as follows.
For a triangle $t$ in $\Gamma$ and edges $i$ and $j$,
we define a skew-symmetric matrix $B^{t}$ by
\begin{align*}
B_{ij}^t = 
\begin{cases}
1 & \text{if $t$ has sides $i$ and $j$, and the direction from $i$ to $j$ is clockwise},\\
-1 &\text{if the same holds, but is counter-clockwise},\\
0 &\text{otherwise}.
\end{cases}
\end{align*}
We define $B_{\Gamma}$ by
\begin{align*}
B_{\Gamma} = \sum_{t \in \Gamma} B^t,
\end{align*}
where the sum is taken over all triangles in $\Gamma$.

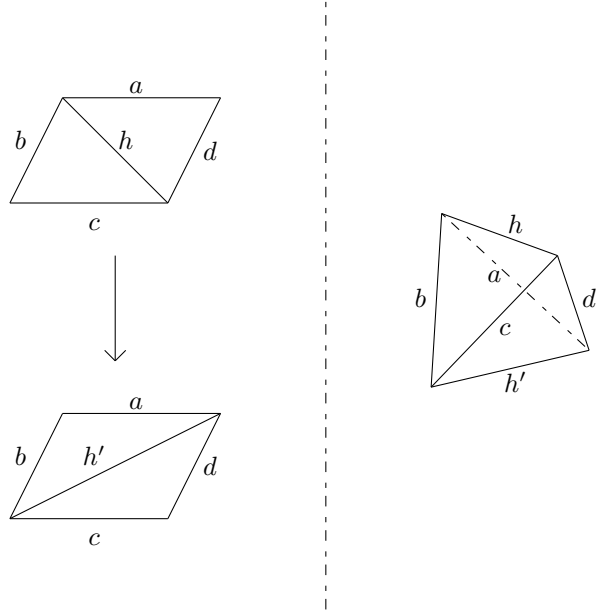
\begin{figure}
	\begin{tikzpicture}[scale=0.7]
	\node (v2) at (-3,4) {};
	\node (v3) at (0,4) {};
	\node (v1) at (-4,2) {};
	\node (v4) at (-1,2) {};
	\node (v6) at (-4,-4) {};
	\node (v5) at (-3,-2) {};
	\node (v8) at (0,-2) {};
	\node (v7) at (-1,-4) {};
	\node (v9) at (-2,1) {};
	\node (v10) at (-2,-1) {};
	\draw  (v1.center) edge (v2.center);
	\draw  (v2.center) edge (v3.center);
	\draw  (v3.center) edge (v4.center);
	\draw  (v1.center) edge (v4.center);
	\draw  (v2.center) edge (v4.center);
	\draw  (v5.center) edge (v6.center);
	\draw  (v6.center) edge (v7.center);
	\draw  (v7.center) edge (v8.center);
	\draw  (v8.center) edge (v5.center);
	\draw  (v6.center) edge (v8.center);
	\draw  (v9.center) edge (v10.center);
	\node (v11) at (2,6) {};
	\node (v12) at (2,-6) {};
	\draw [dash pattern=on 2pt off 3pt on 4pt off 4pt] (v11) edge (v12);
	\node (v13) at (4.2,1.8) {};
	\node (v14) at (4,-1.5) {};
	\node (v16) at (6.4,1) {};
	\node (v15) at (7,-0.8) {};
	\draw (v13.center) edge (v14.center);
	\draw (v14.center) edge (v15.center);
	\draw (v16.center) edge (v15.center);
	\draw (v13.center) edge (v16.center);
	\draw [dash pattern=on 2pt off 3pt on 4pt off 4pt] (v13.center) edge (v15.center);
	\draw (v14.center) edge (v16.center);
	\node (v17) at (-2.2,-0.8) {};
	\node (v18) at (-1.8,-0.8) {};
	\draw (v17.center) edge (v10.center);
	\draw (v10.center) edge (v18.center);
	\node at (-1.6,4.2) {$a$};
	\node at (-3.8,3.2) {$b$};
	\node at (-2.4,1.6) {$c$};
	\node at (-1.8,3.2) {$h$};
	\node at (-0.2,3) {$d$};
	\node at (-1.6,-1.8) {$a$};
	\node at (-3.8,-2.8) {$b$};
	\node at (-2.4,-2.8) {$h'$};
	\node at (-2.4,-4.4) {$c$};
	\node at (-0.2,-3) {$d$};
	\node at (5.2,0.6) {$a$};
	\node at (3.8,0.2) {$b$};
	\node at (5.4,-0.4) {$c$};
	\node at (7,0.2) {$d$};
	\node at (5.6,1.6) {$h$};
	\node at (5.6,-1.4) {$h'$};
	\end{tikzpicture}
	\caption{Attaching a topological tetrahedron to quadrilaterals.}
	\label{figure: tetrahedron}
\end{figure}

Let $\phi : S \to S$ be a homeomorphism that restricts a bijection on $P$.
Then $\phi(\Gamma)$ is an ideal triangulation of $S$.
Let us choose a sequence of flips and a permutation of edge labels that connects $\Gamma$ and $\phi(\Gamma)$.
This provides a mutation loop $\gamma$ such that $B(0)=B_\Gamma$ and $B(T)=B_{\phi(\Gamma)}$ by the compatibility between flips and mutations.
We assume that triangulations that appear in the sequence of flips do not contain self-folded edges, and also assume that $\gamma$ is fully mutated.
Let $h$ be a edge flipped in the mutation loop,
and $h'$ be the edge after the flip.
Let $a$, $b$, $c$, and $d$ be the edges of the quadrilateral in the triangulations whose diagonals are $h$ and $h'$.
We attach a topological tetrahedron to this quadrilateral
as in Figure \ref{figure: tetrahedron}.
This provides a topological tetrahedral decomposition $\Delta$ of the mapping torus of $\phi$.
More specifically, removing 0-simplices from $\Delta$ gives an topological ideal triangulation of the 3-manifold $(S\setminus P) \times [0,1] / \{ (x,0) \sim (\phi(x),1) \}$.
Let $A_+$ and $A_-$ be the Neumann-Zagier matrices of this mutation loop.
Then these matrices coincides with the matrices used by Neumann-Zagier~\cite[(16)]{NeumannZagier} to describe the edge gluing equations for a 3-manifold (the mapping torus of $\phi$ in this case) to be hyperbolic.

\begin{example}
	Let us consider the examples in~\cite[Section 5.1]{NagaoTerashimaYamazaki}.
	Let $S$ be a torus, and $P$ is a one point set.
	Let $\Gamma$ be a triangulation of $(S,P)$.
	Then we have
	\begin{align*}
	B_{\Gamma} =
	\begin{pmatrix}
	0 & 2 & -2 \\
	-2 & 0 & 2 \\
	2 & -2 & 0
	\end{pmatrix}
	\end{align*}
	for an appropriate labeling of the edges of $\Gamma$.
	The mapping class group of $S \setminus P \simeq (\mathbb{R}^2 \setminus \mathbb{Z}^2) / \mathbb{Z}^2$ is identified with $\mathrm{SL}_2(\mathbb{Z})$.
	We define the following two elements in $\mathrm{SL}_2(\mathbb{Z})$:
	\begin{align*}
		R=
		\begin{pmatrix}
			1 & 1 \\
			0 & 1
		\end{pmatrix},\quad
		L=
		\begin{pmatrix}
			1 & 0 \\
			1 & 1
		\end{pmatrix}.
	\end{align*}
	Let $\phi$ be a homeomorphism that represents $RL \in \mathrm{SL}_2(\mathbb{Z})$.
	Then the mapping torus of $\phi$ is the figure-eight knot complement in the 3-sphere (see Figure \ref{figure: 4_1}).
	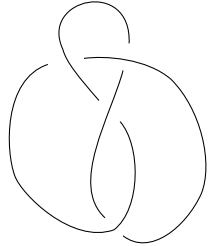
\begin{figure}
		\begin{tikzpicture}[scale=0.4]
		\draw (2.5,2.3) .. controls (2.6,4.5) and (-0.5,3.8) .. (0.3,2.1) .. controls (0.4,1.7) and (0.8,1.2) .. (1.5,0.4);
		\draw (2.2,-0.3) .. controls (2.9,-1.1) and (2.9,-3.2) .. (2,-3.9) .. controls (0.6,-4.4) and (-1.1,-2.7) .. (-1.3,-2.1) .. controls (-1.6,-1.3) and (-1.7,1.1) .. (-0.2,1.6);
		\draw (1.7,-3.5) .. controls (0.5,-2.4) and (1.9,-0.1) .. (2.3,1.4);
		\draw (1,1.8) .. controls (1.7,1.9) and (3.1,1.8) .. (3.9,1.1) .. controls (4.9,0.1) and (5.3,-1.7) .. (4.9,-2.7) .. controls (4.4,-3.8) and (3.2,-4.8) .. (2.3,-4.1);
		\end{tikzpicture}
		\caption{The figure-eight knot.}
		\label{figure: 4_1}
	\end{figure}
	
	We can see that the mutation loop
	$\gamma = (B, m,\sigma)$ for
	$m = (2,1),\sigma =(\id, (3 \; 2\; 1))$ has the property that we want, that is,
	$B(0)=B_\Gamma$ and $B(2)=B_{\phi(\Gamma)}$.
	Moreover, triangulations that appear in this mutation sequence do not have self-folded edges, and $\gamma$ is fully mutated.
	See~\cite[Figure 16]{NagaoTerashimaYamazaki}.
	The mutation network $\network_\gamma$ is given by
	\begin{align*}
	\network_\gamma = 
	\begin{tikzpicture}
	[baseline={([yshift=-.5ex]current bounding box.center)},scale=1.7,auto=left,black_vertex/.style={circle,draw,fill,scale=0.75},square_vertex/.style={rectangle,scale=0.8,draw}]
	\node (a) at (0,0) [black_vertex]{};
	\node (b) at (1,0) [black_vertex]{}; 
	\node (t1) at (0,-1) [square_vertex]{};
	\node (t2) at (1,-1) [square_vertex]{};
	\draw [dashed] (a)edge [bend left=15](t1);
	\draw [dashed] (t1)edge [bend left=15](b);
	\draw [dashed] (a)edge [bend left=15](t2);
	\draw [dashed] (t2)edge [bend left=15](b);
	\draw[arrows={-Stealth[scale=1.2]Stealth[scale=1.2]}] (b)--(t1);
	\draw[arrows={-Stealth[scale=1.2]Stealth[scale=1.2]}] (t1)--(a);
	\draw[arrows={-Stealth[scale=1.2]Stealth[scale=1.2]}] (b)--(t2);
	\draw[arrows={-Stealth[scale=1.2]Stealth[scale=1.2]}] (t2)--(a);
	\end{tikzpicture}
	\end{align*}
	(this mutation network is the same as that in~\cite[Figure 21]{TerashimaYamazaki}), and thus we obtain
	\begin{align*}
	N_0=
	\begin{pmatrix}
	1 & 1 \\
	1 & 1
	\end{pmatrix},
	N_+=
	\begin{pmatrix}
	2 & 2 \\
	0 & 0
	\end{pmatrix},
	N_-=
	\begin{pmatrix}
	0 & 0 \\
	2 & 2
	\end{pmatrix}.
	\end{align*}
	Therefore, the Neumann-Zagier matrices of this mutation loop are
	\begin{align*}
	A_+=
	\begin{pmatrix}
	-1 & -1\\
	1 & 1
	\end{pmatrix},
	A_-=
	\begin{pmatrix}
	1 & 1 \\
	-1 & -1
	\end{pmatrix}.
	\end{align*}
	
	The solution space of the gluing equations is given by
	\begin{align*}
	\mathfrak{D}_\gamma =
	\{ (z,w) \in (\CC \setminus \{ 0,1 \})^2 \mid
	z w (1-z) (1-w) = 1
	\},
	\end{align*} 
	where $z = z_{1,-}$ and $w=z_{2,-}$.
	An element $(z,w) \in \mathfrak{D}_\gamma$ that satisfies $\Imag(z),\Imag(w)>0$ and $\arg(z)+\arg(w) + \arg(1/(1-z))+\arg(1/(1-w))= 2\pi$
	gives a hyperbolic structure of the figure-eight knot complement,
	and the distinguished solution 
	\begin{align*}
	z=w=\frac{1+\sqrt{-3}}{2}
	\end{align*}
	gives the complete hyperbolic structure~\cite[Section 4.3]{Thurston}.
	In particular, the hyperbolic volume of the figure-eight knot complement is
	\begin{align*}
	2 D \left(\frac{1+\sqrt{-3}}{2} \right)  = 2.02988321281 \dots
	\end{align*}
	where $D(z)$ is the Bloch-Wigner function, which is defined by $D(z) = \Imag(\Li_2(z)) + \arg(1-z) \log \lvert z \rvert$ ($\Li_2(z)$ is the dilogarithm function).
	The fixed-point $\eta \in \mathfrak{F}_\gamma$ corresponding to this distinguished solution via the bijection in Proposition \ref{prop: bijection F D} is the one computed in~\cite[Section 5.1]{NagaoTerashimaYamazaki}, and this is given by
	\begin{align*}
	\eta_1=1, \quad \eta_2 = \frac{-1-\sqrt{-3}}{2}, \quad \eta_3 = \frac{-1+\sqrt{-3}}{2}.
	\end{align*}
\end{example}

\subsection{Dynkin examples}\label{section: dynkin examples}
Let $X_n$ be an $ADE$ Dynkin diagram with $n$ nodes,
and $Q$ be a quiver such that its underlying graph is $X_n$ and all vertices in $Q$ are sinks or sources.
Let $B$ be the skew-symmetric matrix corresponding to $Q$.
Let $m_{\pm}$ be a sequence of all the sources and sinks respectively.
Then $\gamma = (B, m_- m_+, (\id, \dots, \id))$ is a mutation loop.
For example, $\gamma$ for $A_2$ is the mutation loop considered in Example \ref{example: A2} and \ref{example: A2 matrix}.
Note that the length of $\gamma$ is equal to the number of nodes in the Dynkin diagram, that is, we have $T=n$.

The Neumann-Zagier matrices $A_+$ and $A_-$ are given by
\begin{align*}
	A_+ = 2 I_n, \quad A_- = C,
\end{align*}
where $I_n$ is the identity matrix and $C$ is a Cartan matrix of $X_n$.
The solution space of the gluing equations in the open interval $(0,1)$ is given by
\begin{align*}
\mathfrak{D}_\gamma^{(0,1)} &=
\{ (z_{t,+},z_{t,-})_{1 \leq t \leq n} \in (0,1)^{2n} \mid z_{s,+}^{-2} \prod_{t=1}^n z_{t,-}^{C_{st}}  = 1 \text{ for $s=1,\dots, n$}
\}\\
&=\{ z \in (0,1)^n  \mid  \prod_{t=1}^n z_t^{C_{st}/2}  = 1-z_s \text{ for $s=1,\dots, n$}
\}
\end{align*}
where $z_t = z_{t,-}$.
This space has exactly one element (this follows from \cite[Lemma 2.1]{MashaSander} and the fact that the Cartan matrix $C$ is positive definite),
and we denote this element by $\zeta$.
Then we have the following dilogarithm identity:
\begin{align}\label{eq: dilog central charge}
	\frac{6}{\pi^2} \sum_{i=1}^n L(\zeta_i) =  \frac{2 \dim \mathfrak{g}}{2 + \dcn} - n,
\end{align}
where $L(x)$ is the Rogers dilogarithm defined by $L(x)=\Li_2(x) + \frac{1}{2} \log(x) \log(1-x)$,
$\mathfrak{g}$ is the simple Lie algebra of the type $X_n$, and $\dcn$ is the dual Coxeter number of $X_n$.
The right-hand side of \eqref{eq: dilog central charge} is a central charge of a conformal field theory associated with $\mathfrak{g}$.
See~\cite[Section 5.1]{KNS2011} for more detail on the dilogarithm identities.

\subsection{Symplectic property of mutation loops}
As we saw in the last two sections, we often encounter mutation loops, not just mutation sequences.
Here, we give a necessary condition for a mutation sequence to be a mutation loop when the exchange matrices are skew-symmetric.
\begin{proposition}
	Let $\gamma = (B,m,\sigma)$ be a mutation sequence, and
	suppose that $B$ is skew-symmetric.
	If $B(0)=B(T)$, then $A_+ A_-^T$ is symmetric.
\end{proposition}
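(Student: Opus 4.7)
The natural approach is to reduce the symmetry claim to a telescoping identity for the $B$-matrices along the sequence, closed up by the loop hypothesis $B(0) = B(T)$.

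First, I would perform an algebraic reduction. Since $A_\pm = N_0 - N_\pm$, we have $A_+ - A_- = -(N_+ - N_-) =: -M$, and direct expansion gives
\[
A_+ A_-^T - A_- A_+^T = (A_- - M) A_-^T - A_- (A_- - M)^T = A_- M^T - M A_-^T.
\]
Thus it suffices to show that $A_- M^T$ is symmetric. By Lemma 3.4, the $t$-th column of $M$ records the $m_t$-th row of $B(t-1)$ spread across equivalence classes: $M_{et} = \sum_{i:(i,t-1)\in e} B_{m_t, i}(t-1)$, with an equivalent second form supported on $(\cdot, t)$.

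Second, I would compute $(A_- M^T - M A_-^T)_{ee'}$ as a sum over $t$ by substituting Lemmas 3.3 and 3.4. Mixing the two forms of Lemma 3.4 across the two factors, the $t$-th summand collects the quadratic combination $[B_{m_t,i}(t-1)]_+[-B_{m_t,j}(t-1)]_+ - [-B_{m_t,i}(t-1)]_+[B_{m_t,j}(t-1)]_+$. The crucial observation is that, using skew-symmetry of $B(t-1)$ (so $B_{i, m_t} = -B_{m_t, i}$), the Fomin--Zelevinsky mutation rule \eqref{eq: B mutation} can be recast as the identity
\[
B(t-1)_{ij} - B(t)_{\sigma_t(i),\sigma_t(j)} = [B_{m_t,i}(t-1)]_+[-B_{m_t,j}(t-1)]_+ - [-B_{m_t,i}(t-1)]_+[B_{m_t,j}(t-1)]_+
\]
for $i, j \neq m_t$. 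Substituting this identity converts each $t$-th summand into a difference $B(t-1) - \sigma_t^{-1}(B(t))$ evaluated against indicator functions coming from $e, e'$, plus boundary terms supported at the mutation indices $m_t$ and $\sigma_t(m_t)$.

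Finally, summing over $t = 1, \ldots, T$, the equivalence $(i, t-1) \sim (\sigma_t(i), t)$ absorbs the permutations so that the $B(t)$ piece produced at time $t$ cancels the $B(t)$ piece produced at time $t+1$. The sum telescopes through the loop and collapses to $B(T) - B(0)$, which vanishes by hypothesis. The main obstacle is the bookkeeping at the boundary: at indices $i$ or $j$ equal to $m_t$, the mutation rule merely flips signs rather than producing a rank-one update, so the displayed identity above acquires factor-of-$\frac{1}{2}$ discrepancies. Correctly matching these boundary corrections requires the combinatorial structure of $\network_\gamma$---specifically, that each equivalence class contains at most one ``source'' $(m_s, s-1)$ and at most one ``sink'' $(\sigma_s(m_s), s)$ in the graph $G_{\alpha_{m,\sigma}}$, exhibited in Lemma 3.9---so that the boundary contributions pair up and cancel. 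This mirrors the classical Neumann--Zagier symplectic property for ideal triangulations of hyperbolic 3-manifolds, in line with the analogy developed in Section 3.3.
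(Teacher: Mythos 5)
Your overall route is the same as the paper's: after expanding $A_\pm = N_0 - N_\pm$, the symmetry of $A_+A_-^T$ becomes a bilinear identity in the counts $n^0_{et}, n^\pm_{et}$; the sign-pattern combination $\maxzero{B_{m_t,i}(t-1)}\maxzero{-B_{m_t,j}(t-1)}-\maxzero{-B_{m_t,i}(t-1)}\maxzero{B_{m_t,j}(t-1)}$ is rewritten, via skew-symmetry and the exchange-matrix mutation rule, as the increment $B_{ij}(t-1)-B_{\sigma_t(i)\sigma_t(j)}(t)$ for $i,j\neq m_t$; and the sum over $t$ telescopes along the quantity $X(t)=\sum_{(i,s)\in e,(j,u)\in f}B_{ij}(t)\delta_{st}\delta_{ut}$, which closes up because $B(0)=B(T)$. (Your preliminary reduction to the symmetry of $A_-M^T$ with $M=N_+-N_-$ is only a cosmetic repackaging of the same six-term identity that the paper states at the outset of its proof.)

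However, the step you defer is exactly where the substance of the paper's proof lies, and your account of how it would be closed is not correct. First, the $t$-th summand of $(A_-M^T-MA_-^T)_{ee'}$ is not just the quadratic combination: it also contains the cross terms $n^0_{et}n^+_{e't}-n^0_{et}n^-_{e't}-n^+_{et}n^0_{e't}+n^-_{et}n^0_{e't}$, which your sketch silently drops. Second, no factor-of-$\tfrac{1}{2}$ discrepancies arise anywhere, and the boundary contributions do not ``pair up and cancel'' among themselves via the source/sink structure of $G_{\alpha_{m,\sigma}}$ from Lemma \ref{lemma: nilpotent}; note also that the proposition is not restricted to fully mutated sequences, so an equivalence class may be a directed cycle containing no $(m_s,s-1)$ at all. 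What actually happens, and what constitutes the bulk of the paper's computation, is a per-step identity: the leftover boundary $B$-entries at step $t$ (pairs whose time-$(t-1)$ index is $m_t$ or whose time-$t$ index is $\sigma_t(m_t)$) are shown — using $B_{\sigma_t(m_t)\sigma_t(j)}(t)=-B_{m_t j}(t-1)$, the decomposition $B_{m_t j}=\maxzero{B_{m_t j}}-\maxzero{-B_{m_t j}}$, and the indicator formula for $n^0_{et}$ in Lemma \ref{lemma: n0} — to equal precisely those dropped cross terms with the opposite sign, so that the full summand collapses to $X(t)-X(t-1)$ up to an overall sign; only then does summing over $t$ give $X(T)-X(0)=0$. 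Until that identification is carried out, the argument is incomplete, and the cancellation mechanism you propose in its place would not deliver it.
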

\begin{proof}
	What we have to show is
	\begin{align*}
		\sum_{t=1}^T (n_{et}^+ n_{ft}^0 + n_{et}^0 n_{ft}^- + n_{et}^- n_{ft}^+ )
		= \sum_{t=1}^T (n_{et}^- n_{ft}^0 + n_{et}^0 n_{ft}^+ + n_{et}^+ n_{ft}^- )
	\end{align*}
	for each $e  ,f \in \mathcal{E}$.
	First, define the number $X(t)$ by
	\begin{align*}
		X(t) = \sum_{ \substack{(i,s ) \in e \\ (j,u) \in f}} B_{ij}(t) \delta_{st} \delta_{ut}.
	\end{align*}
	Next, if we let $k=m_s$,
	then we find
	\begin{align*}
		&X(t) - X(t-1) = \sum_{\substack{(i,s ) \in e \\ (j,u) \in f \\ i,j  \neq k}} (B_{\sigma_t(i) \sigma_t(j)} (t) - B_{ij}(t-1) )\delta_{s+1,t} \delta_{u+1,t} \\
		& + \sum_{\substack{(i,s ) \in e \\ (j,u) \in f \\ j  \neq k}} B_{\sigma_t( k ) \sigma_t (j)} (t) \delta_{i,\sigma_t(k)} \delta_{st} \delta_{u+1,t} - \sum_{\substack{(i,s ) \in e \\ (j,u) \in f \\ j  \neq k}} B_{kj}  (t-1) \delta_{ik} \delta_{s+1,t} \delta_{u+1,t} \\
		& + \sum_{\substack{(i,s ) \in e \\ (j,u) \in f \\ i  \neq k}} B_{\sigma_t (i)\sigma_t( k ) } (t)\delta_{j,\sigma_t (k) }\delta_{s+1,t} \delta_{ut} - \sum_{\substack{(i,s ) \in e \\ (j,u) \in f \\ i \neq k}} B_{ik}  (t-1)  \delta_{ji}\delta_{s+1,t} \delta_{u+1,t}.
	\end{align*}
	
	Using Lemma  \ref{lemma: n epsilon}, we obtain
	\begin{align*}
		&\sum_{\substack{(i,s ) \in e \\ (j,u) \in f \\ i,j  \neq k}} (B_{\sigma_t(i) \sigma_t(j)} (t) - B_{ij}(t-1)) \delta_{s+1,t} \delta_{u+1,t} \\
		=& \sum_{\substack{(i,s ) \in e \\ (j,u) \in f }}  \bigl( \maxzero{B_{ik}(t-1)} \maxzero{B_{kj}(t-1)}- \maxzero{- B_{ik}(t-1)} \maxzero{-B_{kj}(t-1)} \bigr) \delta_{s+1,t} \delta_{u+1,t} \\
		=&\sum_{\substack{(i,s ) \in e \\ (j,u) \in f }}  \bigl( \maxzero{-B_{ki}(t-1)} \maxzero{B_{kj}(t-1)}- \maxzero{B_{ki}(t-1)} \maxzero{-B_{kj}(t-1)} \bigr) \delta_{s+1,t} \delta_{u+1,t} \\
		=&n_{et}^- n_{ft}^+  - n_{et}^+ n_{ft}^-.
	\end{align*}
	Using Lemma \ref{lemma: n0}, we find
	\begin{align*}
	&\sum_{\substack{(i,s ) \in e \\ (j,u) \in f \\ j  \neq k}} B_{\sigma_t( k ) \sigma_t (j)} (t) \delta_{i,\sigma_t(k)} \delta_{st} \delta_{u+1,t} - \sum_{\substack{(i,s ) \in e \\ (j,u) \in f \\ j  \neq k}} B_{kj}  (t-1) \delta_{ik} \delta_{s+1,t} \delta_{u+1,t} \\
	=&\sum_{\substack{(i,s ) \in e \\ (j,u) \in f}} -B_{k j} (t-1) \delta_{i,\sigma_t(k)} \delta_{st} \delta_{u+1,t} - \sum_{\substack{(i,s ) \in e \\ (j,u) \in f }} B_{kj}  (t-1) \delta_{ik} \delta_{s+1,t} \delta_{u+1,t} \\
	=&  \left( \sum_{(i,s ) \in e}  \delta_{i,\sigma_t(k)} \delta_{st}  +  \delta_{ik} \delta_{s+1,t} \right) \left( \sum_{ (j,u) \in f} -B_{k j} (t-1)\delta_{u+1,t}    \right)\\
	=&  \left( \sum_{(i,s ) \in e}  \delta_{i,\sigma_t(k)} \delta_{st}  +  \delta_{ik} \delta_{s+1,t} \right) \left( \sum_{ (j,u) \in f} \bigl(\maxzero{-B_{k j} (t-1)}  -\maxzero{B_{k j} (t-1)} \bigr) \delta_{u+1,t}    \right) \\
	=&n_{et}^0 n_{ft}^- - n_{et}^0 n_{ft}^+.
	\end{align*}
	Similarly, using Lemma \ref{lemma: n0} yields
	\begin{align*}
	& \sum_{\substack{(i,s ) \in e \\ (j,u) \in f \\ i  \neq k}} B_{\sigma_t (i)\sigma_t( k ) } (t)\delta_{j,\sigma_t (k) } \delta_{s+1,t} \delta_{ut} - \sum_{\substack{(i,s ) \in e \\ (j,u) \in f \\ i \neq k}} B_{ik}  (t-1) \delta_{s+1,t} \delta_{u+1,t} \\
	=&\sum_{\substack{(i,s ) \in e \\ (j,u) \in f}} B_{ki} (t-1) \delta_{j,\sigma_t (k) } \delta_{s+1,t} \delta_{u,t} + \sum_{\substack{(i,s ) \in e \\ (j,u) \in f }} B_{ki}  (t-1) \delta_{jk} \delta_{s+1,t} \delta_{u+1,t} \\
	=&   \left( \sum_{ (i,s) \in e} B_{ki} (t-1)\delta_{s+1,t}    \right) \left( \sum_{(j,u ) \in f}  \delta_{j,\sigma_t(k)} \delta_{ut}  +  \delta_{jk} \delta_{u+1,t} \right)\\
	=&   \left( \sum_{ (i,s) \in e} \bigl(-\maxzero{-B_{ki} (t-1)}  +\maxzero{B_{ki} (t-1)} \bigr) \delta_{s+1,t}    \right)\left( \sum_{(j,u) \in f}  \delta_{j,\sigma_t(k)} \delta_{ut}  +  \delta_{jk} \delta_{u+1,t} \right)  \\
	=&-n_{et}^- n_{ft}^0 + n_{et}^+ n_{ft}^0.
	\end{align*}
	This allows us to obtain
	\begin{align*}
	X(t) - X(t-1) =
	n_{et}^+ n_{ft}^0 + n_{et}^0 n_{ft}^- + n_{et}^- n_{ft}^+ 
	- n_{et}^- n_{ft}^0 - n_{et}^0 n_{ft}^+ - n_{et}^+ n_{ft}^- ,
	\end{align*}
	implying that
	\begin{align*}
	X(T) - X(0) = \sum_{t=1}^T (n_{et}^+ n_{ft}^0 + n_{et}^0 n_{ft}^- + n_{et}^- n_{ft}^+ 
	- n_{et}^- n_{ft}^0 - n_{et}^0 n_{ft}^+ - n_{et}^+ n_{ft}^-) .
	\end{align*}
	Since $B(T) = B(0)$, we have $X(T)= X(0)$.
	This completes the proof.
\end{proof}

The matrix $A_+ A_-^T$ is symmetric if and only if
\begin{align}\label{eq: symplectic orthogonal}
	\sum_{t=1}^T ( a_{et}^+ a_{ft}^- - a_{et}^- a_{ft}^+ ) = 0
\end{align}
for all $e,f \in \mathcal{E}$, i.e., rows in the $\mathcal{E} \times 2T$ matrix $(A_+ \; A_- )$ are orthogonal to each other with respect to the standard symplectic form.
If a mutation sequence is in Section \ref{section: surface examples},
the equation \eqref{eq: symplectic orthogonal} coincides with a formula obtained by Neumann-Zagier~\cite[(24)]{NeumannZagier}.

\subsection{Obtaining Neumann-Zagier matrices from block matrices}\label{subsection: NZ from X}
For all $\varepsilon \in \{ \pm ,0 \}$, let $F_{k,\varepsilon}(B)$ be the matrix that is equal to the identity matrix except that the $k$-th row is given by
\begin{align}\label{eq:def of F}
(F_{k,\varepsilon}(B))_{kj} =
\begin{cases}
-1 & \text{if $j=k$}, \\
\maxzero{\varepsilon B_{kj}} & \text{if $j \neq k$}.
\end{cases}
\end{align}
The transpose $F_{k,\varepsilon}(B)^T$ is then equal to the identity matrix except that $k$-th column is given by
\begin{align}\label{eq:def of F^T}
(F_{k,\varepsilon}(B)^T)_{ik} =
\begin{cases}
-1 & \text{if $i=k$}, \\
\maxzero{\varepsilon B_{ki}} & \text{if $i \neq k$}.
\end{cases}
\end{align}

Let $\gamma=(B,m,\sigma)$ be a mutation sequence.
For a given sign sequence $\boldsymbol{\varepsilon} =(\varepsilon_1 , \dots , \varepsilon_T) \in \{ \pm,0 \}^T$, define the block matrix $L_{\boldsymbol{\varepsilon}}$ by
\begin{align}\label{eq: def of L}
L_{\boldsymbol{\varepsilon}} =
\begin{pmatrix}
O &  \cdots & O &  P_{\sigma_1} F_{m_1,\varepsilon_1}^T \\
P_{\sigma_2} F_{m_2,\varepsilon_2}^T & \cdots & O & O \\
\vdots &  \ddots & \vdots & \vdots \\
O  &  \cdots &  P_{\sigma_T} F_{m_T,\varepsilon_T}^T &O \\
\end{pmatrix},
\end{align}
where we denote $F_{m_t,\varepsilon_t}(B(t-1))^T$ by $F_{m_t,\varepsilon_t}^T$ for brevity.
When $\varepsilon_t = +$ for all $t=1 ,\dots, T$, we denote $L_{\boldsymbol{\varepsilon}}$ by $L_+$,
defining $L_-$ and $L_0$ similarly.
In addition, let $X_{\boldsymbol{\varepsilon}} = (I - \alpha_{m,\sigma})^{-1} (I - L_{\boldsymbol{\varepsilon}})$, where $\alpha_{m,\sigma}$ is defined by \eqref{eq: def of alpha}.
Here, we assume that $\alpha_{m,\sigma}$ is nilpotent, so $ I - \alpha_{m,\sigma}$ has the inverse.
We also define $X_+$, $X_-$, and $X_0$ in the natural way.
For a $T \times T$ block matrix whose blocks are $n\times n$ matrices,
we assume that the $0$-th block means the $T$-th block and $(i,t)$ corresponds the $i$-th index in the $t$-th block, in the same manner as $\alpha_{m,\sigma}$ in Section \ref{subsection: fully mutated}.
\begin{lemma}\label{lemma: NZ from X}
	Suppose that $\gamma$ is fully mutated, and
	let $A_{\boldsymbol{\varepsilon}}=(a_{st}^{\varepsilon_t})_{1 \leq s,t \leq T}$ be its $\boldsymbol{\varepsilon}$-signed Neumann-Zagier matrix for the sign sequence $\boldsymbol{\varepsilon} =(\varepsilon_1 , \dots , \varepsilon_T) \in \{  \pm,0 \}^T$.
	Then, we have the following.
	\begin{enumerate}
		\item The matrix $X_{\boldsymbol{\varepsilon}}$ is the identity matrix except for the $m_t$-th column in the $(t-1)$-th block for $t= 1, \dots, T$.
		\item For all $1 \leq s,t \leq T$, we have $(X_{\boldsymbol{\varepsilon}})_{(m_s,s-1), (m_t,t-1)} = a_{st}^{\varepsilon_t}$.
	\end{enumerate}
\end{lemma}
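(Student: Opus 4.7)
The plan is to rewrite $X_{\boldsymbol{\varepsilon}}$ as
\begin{align*}
X_{\boldsymbol{\varepsilon}} = I + (I - \alpha_{m,\sigma})^{-1}(\alpha_{m,\sigma} - L_{\boldsymbol{\varepsilon}}),
\end{align*}
which follows from the identity $I - L_{\boldsymbol{\varepsilon}} = (I - \alpha_{m,\sigma}) + (\alpha_{m,\sigma} - L_{\boldsymbol{\varepsilon}})$. The only possibly nonzero blocks of $\alpha_{m,\sigma} - L_{\boldsymbol{\varepsilon}}$ are the $(t,t-1)$-blocks $P_{\sigma_t}(H_{m_t} - F_{m_t,\varepsilon_t}^T)$, and $H_{m_t}$ and $F_{m_t,\varepsilon_t}^T$ agree outside their $m_t$-th columns. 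Therefore every nonzero entry of $\alpha_{m,\sigma} - L_{\boldsymbol{\varepsilon}}$ lies in a column indexed by $(m_t, t-1)$ for some $t$, and the same property survives left-multiplication by $(I - \alpha_{m,\sigma})^{-1}$. This proves (i).

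For (ii) I compute the two factors separately. A direct look at \eqref{eq:def of F^T}, together with the vanishing of $B_{m_t,m_t}(t-1)$, shows that the $(j,t)$-entry of the $(m_t, t-1)$-column of $\alpha_{m,\sigma} - L_{\boldsymbol{\varepsilon}}$ is uniformly given by
\begin{align*}
\delta_{j,\sigma_t(m_t)} - \maxzero{\varepsilon_t B_{m_t,\sigma_t^{-1}(j)}(t-1)}.
\end{align*}
For the other factor I use that $\alpha_{m,\sigma}$ is nilpotent by Lemma \ref{lemma: nilpotent}, so $(I - \alpha_{m,\sigma})^{-1} = \sum_{k \ge 0} \alpha_{m,\sigma}^k$ is the path-counting matrix of $G_{\alpha_{m,\sigma}}$. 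Since every vertex has in- and out-degree at most one and the graph is acyclic, at most one directed path joins any ordered pair of vertices; the vertices with no incoming arrow are precisely those of the form $(m_r, r-1)$, so $(m_s, s-1)$ is the starting vertex of its connected component $e = [(m_s, s-1)]$. Consequently $[(I - \alpha_{m,\sigma})^{-1}]_{(m_s, s-1),(j,t)}$ equals the indicator of $(j,t) \in e$.

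Combining the two pieces (and noting that the $((m_s,s-1),(m_t,t-1))$-entry of the identity equals $\delta_{st}$), I obtain
\begin{align*}
(X_{\boldsymbol{\varepsilon}})_{(m_s, s-1),(m_t, t-1)} = \delta_{st} + \sum_{(i,u) \in e} \bigl(\delta_{i,\sigma_t(m_t)}\delta_{u,t} - \maxzero{\varepsilon_t B_{m_t,\sigma_t^{-1}(i)}(t-1)}\delta_{u,t}\bigr).
\end{align*}
By the second formula of Lemma \ref{lemma: n0} the first two summands assemble into $n_{et}^0$, and by \eqref{eq: n^epsilon 2} the remaining sum equals $n_{et}^{\varepsilon_t}$ (vacuously zero when $\varepsilon_t = 0$, which is consistent with $A_0 = N_0$). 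Hence the right-hand side is $n_{et}^0 - n_{et}^{\varepsilon_t} = a_{st}^{\varepsilon_t}$ uniformly in $\varepsilon_t \in \{+,-,0\}$. The main bookkeeping hurdle I expect is keeping the block indexing, the identification $\{1,\ldots,T\} \leftrightarrow \mathcal{E}$, and the characterization of starting vertices of $G_{\alpha_{m,\sigma}}$ consistent throughout the path-counting step; once that is in place, the remaining manipulation is essentially forced by Lemmas \ref{lemma: n0} and \ref{lemma: n epsilon}.
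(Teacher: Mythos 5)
Your proposal is correct and follows essentially the same route as the paper: the rewriting $X_{\boldsymbol{\varepsilon}} = I - (I-\alpha_{m,\sigma})^{-1}(L_{\boldsymbol{\varepsilon}}-\alpha_{m,\sigma})$ for part (i), and for part (ii) the interpretation of $(I-\alpha_{m,\sigma})^{-1}$ as a path-counting matrix of $G_{\alpha_{m,\sigma}}$ combined with Lemmas \ref{lemma: n0} and \ref{lemma: n epsilon}. Your treatment is in fact slightly more careful than the paper's on two small points (identifying $(m_s,s-1)$ as the source of its component so that connectivity really gives a directed path, and checking the $\varepsilon_t=0$ case explicitly), but the argument is the same.
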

\begin{proof}
	(i) follows from the facts that $X_{\boldsymbol{\varepsilon}}$ can be rewritten as
	\begin{align*}
	X_{\boldsymbol{\varepsilon}} &= (I - \alpha_{m,\sigma})^{-1} (I -L_{\boldsymbol{\varepsilon}}) \\
	&= (I - \alpha_{m,\sigma})^{-1} (I - \alpha_{m,\sigma} - ( L_{\boldsymbol{\varepsilon}} - \alpha_{m,\sigma}) ) \\
	&= I -  (I - \alpha_{m,\sigma})^{-1} ( L_{\boldsymbol{\varepsilon}} - \alpha_{m,\sigma}),
	\end{align*}
	and that the columns of $( L_{\boldsymbol{\varepsilon}} - \alpha_{m,\sigma})$ are zero except for column $(m_t , t-1)$, for $t=1 , \dots, T$.
	Now, we prove (ii).
	Let $x_{(i,s),(j,t)}$ be the entry in the $(i,s)$-th row and $(j,t)$-th column of $(I - \alpha_{m,\sigma})^{-1}$.
	Let $G_{\alpha_{m,\sigma}}$ be the graph whose adjacency matrix is $\alpha_{m,\sigma}$ as defined in Section \ref{subsection: fully mutated}.
	The entry in the $(i,s)$-th row and $(j,t)$-th column of $(\alpha_{m,\sigma})^k$ is the number of length $k$ paths in $G_{\alpha_{m,\sigma}}$ from $(i,s)$ to $(j,t)$.
	Thus $(i,s)$ and $(j,t)$ are connected in $G_{\alpha_{m,\sigma}}$ if and only if $(\alpha_{m,\sigma})^{k}_{(i,s),(j,t)} = 1$ for some non-negative integer $k$.
	Since
	\begin{align*}
	(I - \alpha_{m,\sigma})^{-1} = \sum_{ k \geq 0} (\alpha_{m,\sigma})^k
	\end{align*}
	(where the right-hand side is actually a finite sum), we have
	\begin{align}
	x_{(i,s),(j,t)} = 
	\begin{cases}
	1 & \text{if $(i,s) \sim (j,t)$}, \\
	0 & \text{otherwise}.
	\end{cases}
	\end{align}
	Let $e= [  (m_s,s-1) ] \in \mathcal{E}$ be the label of the black vertex in $\network_\gamma$ that is the equivalence class of $(m_s,s-1)$.
	Then
	\begin{align*}
	(X_{\boldsymbol{\varepsilon}})_{(m_s,s-1), (m_t,t-1)} 
	&=  \delta_{st} - \sum_{\substack{1 \leq i \leq n \\ 1 \leq u \leq T}} x_{(m_s,s-1),(i,u)} (L_{\boldsymbol{\varepsilon}} - \alpha_{m,\sigma})_{(i,u),(m_t ,t-1)} \\
	&= \delta_{st}  +  \sum_{(i,u) \in e} \delta_{i,\sigma_t (m_t)} \delta_{tu} \\
	&\quad - \sum_{(i,u) \in e} \maxzero{\varepsilon_t B_{m_t, \sigma_t^{-1} (i) } (t-1)} \delta_{tu} ,
	\end{align*}
	which is equal to the $(s,t)$-entry of $A_{\boldsymbol{\varepsilon}}$ by Lemma \ref{lemma: n0} and \ref{lemma: n epsilon}.
\end{proof}

The following result is an important consequence of Lemma \ref{lemma: NZ from X}.
\begin{theorem}\label{theorem: det formula of F}
	Let $\gamma$ be a fully mutated mutation sequence, and let 
	\begin{align*}
	F_{\boldsymbol{\varepsilon}} =  F_{m_1, \varepsilon_1} P_{\sigma_1^{-1}} F_{m_1, \varepsilon_2}  P_{\sigma_2^{-1}} \cdots F_{m_T , \varepsilon_T}  P_{\sigma_T^{-1}},
	\end{align*}
	for $\boldsymbol{\varepsilon} =(\varepsilon_1 , \dots , \varepsilon_T) \in \{  \pm,0 \}^T$.
	Then
	\begin{align}
	\det (I_n - F_{\boldsymbol{\varepsilon}} ) = \det A_{\boldsymbol{\varepsilon}}.
	\end{align}
\end{theorem}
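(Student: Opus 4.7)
The plan is to compute $\det X_{\boldsymbol{\varepsilon}}$ in two different ways, where $X_{\boldsymbol{\varepsilon}} = (I - \alpha_{m,\sigma})^{-1}(I - L_{\boldsymbol{\varepsilon}})$ is the $nT \times nT$ block matrix from the preceding subsection. Lemma \ref{lemma: NZ from X} gives us the right-hand side for free, and a block-cyclic determinant computation will give us the left-hand side.

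\emph{Step 1 (right-hand side).} By Lemma \ref{lemma: NZ from X}(i), every column of $X_{\boldsymbol{\varepsilon}}$ indexed by $(j,u)$ with $(j,u) \neq (m_t,t-1)$ for any $t$ is a standard basis vector. In particular, on any non-special row $(j,u)$, the entry in a non-special column $(j',u')$ equals $\delta_{(j,u),(j',u')}$. Relabeling so that the $T$ special indices $(m_1,0), \ldots, (m_T, T-1)$ come first, $X_{\boldsymbol{\varepsilon}}$ takes the block lower-triangular form
\begin{align*}
\begin{pmatrix} A_{\boldsymbol{\varepsilon}} & 0 \\ * & I_{(n-1)T} \end{pmatrix},
\end{align*}
where the upper-left block is $A_{\boldsymbol{\varepsilon}}$ by Lemma \ref{lemma: NZ from X}(ii). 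Hence $\det X_{\boldsymbol{\varepsilon}} = \det A_{\boldsymbol{\varepsilon}}$.

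\emph{Step 2 (removing $\alpha_{m,\sigma}$).} Since $\gamma$ is fully mutated, Lemma \ref{lemma: nilpotent} tells us $\alpha_{m,\sigma}$ is nilpotent, so $\det(I - \alpha_{m,\sigma}) = 1$. Therefore $\det X_{\boldsymbol{\varepsilon}} = \det(I - L_{\boldsymbol{\varepsilon}})$, and combining with Step 1 we obtain $\det(I-L_{\boldsymbol{\varepsilon}}) = \det A_{\boldsymbol{\varepsilon}}$.

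\emph{Step 3 (left-hand side).} Setting $B_t = P_{\sigma_t} F_{m_t,\varepsilon_t}^T$, the matrix $I - L_{\boldsymbol{\varepsilon}}$ is block cyclic, with $I$ on the block diagonal, $-B_t$ at block position $(t,t-1)$ for $t \geq 2$, and $-B_1$ at block position $(1,T)$. Applying block row operations successively for $t=2,\ldots,T$ (adding $B_t$ times the $(t-1)$-th block row to the $t$-th block row) clears each subdiagonal $-B_t$ and propagates the cumulative product $-B_t B_{t-1}\cdots B_1$ into the $(t,T)$ position. After $T-1$ such steps we have a block upper-triangular matrix with diagonal blocks $I, I, \ldots, I, I - B_T B_{T-1}\cdots B_1$, so $\det(I-L_{\boldsymbol{\varepsilon}}) = \det(I_n - B_T B_{T-1}\cdots B_1)$. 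Using $P_{\sigma}^T = P_{\sigma^{-1}}$, a direct transpose calculation gives
\begin{align*}
(B_T B_{T-1}\cdots B_1)^T = P_{\sigma_1^{-1}}^{-T}\cdots F_{m_1,\varepsilon_1} = F_{m_1,\varepsilon_1} P_{\sigma_1^{-1}} F_{m_2,\varepsilon_2} P_{\sigma_2^{-1}}\cdots F_{m_T,\varepsilon_T} P_{\sigma_T^{-1}} = F_{\boldsymbol{\varepsilon}},
\end{align*}
so $\det(I_n - B_T\cdots B_1) = \det(I_n - F_{\boldsymbol{\varepsilon}})$. Combined with Step 2 this gives $\det(I_n - F_{\boldsymbol{\varepsilon}}) = \det A_{\boldsymbol{\varepsilon}}$.

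The main obstacle is Step 3: one must carefully verify the block row reduction, tracking that exactly the cyclic product $B_T B_{T-1}\cdots B_1$ (and not some other cyclic rotation) lands in the bottom-right corner, and then check that after transposing one recovers the product $F_{\boldsymbol{\varepsilon}}$ with the factors $F_{m_t,\varepsilon_t}$ and $P_{\sigma_t^{-1}}$ in the order stated in the theorem. Step 1 is conceptually easy but requires attentive index bookkeeping to justify the block lower-triangular form.
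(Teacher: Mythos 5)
Your proposal is correct and takes essentially the same route as the paper: the paper likewise computes $\det X_{\boldsymbol{\varepsilon}}$ via Lemma \ref{lemma: NZ from X}, uses nilpotency of $\alpha_{m,\sigma}$ to drop the factor $\det(I-\alpha_{m,\sigma})^{-1}$, and then passes from $\det(I-L_{\boldsymbol{\varepsilon}})$ to $\det(I_n-F_{\boldsymbol{\varepsilon}}^T)=\det(I_n-F_{\boldsymbol{\varepsilon}})$, which is exactly the block-cyclic reduction you carry out explicitly in Step 3. Your Steps 1 and 3 simply supply the bookkeeping the paper leaves implicit (the garbled intermediate expression in your transpose chain notwithstanding, the final identity $(B_T\cdots B_1)^T=F_{\boldsymbol{\varepsilon}}$ is right).
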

\begin{proof}
	By Lemma \ref{lemma: NZ from X}, we obtain $\det X_{\boldsymbol{\varepsilon}} = \det A_{\boldsymbol{\varepsilon}}$.
	Since $\alpha_{m,\sigma}$ is nilpotent, we have $\det (I - \alpha_{m,\sigma}) = 1$. Thus, we find that
	\begin{align*}
	\det A_{\boldsymbol{\varepsilon}} &= \det X_{\boldsymbol{\varepsilon}} \\
	&=\det (I - \alpha_{m,\sigma})^{-1}  \det ( I - L_{\boldsymbol{\varepsilon}}  ) \\
	&= \det ( I - L_{\boldsymbol{\varepsilon}}  ) \\
	&= \det (I_n - F_{\boldsymbol{\varepsilon}}^T  ) \\
	&= \det (I_n - F_{\boldsymbol{\varepsilon}}).
	\end{align*}
\end{proof}

\section{Jacobian matrix}\label{section: jacobian matrix}
In this section, we study the Jacobian matrices associated with mutation sequences,
giving a determinant formula for them (Theorem \ref{theorem:det formula}).
For more detail, we calculate a special value of their characteristic polynomials by using the Neumann-Zagier matrices of the associated mutation sequence and the $Y$'s at the mutation points.
\subsection{Derivatives of Y-seed mutations}
The partial derivatives of $\widetilde{Y}=\mu_k(Y)$ can be calculated from \eqref{eq: Y mutation} as follows:
\begin{align}
	\frac{\partial \widetilde{Y}_i}{\partial Y_j} &=
	\begin{cases}
	-Y_{k}^{-2} & \text{if $i=j=k$}, \\
	\delta_{ij} (Y_k^{-1} + 1)^{-B_{ki}} & \text{if $j \neq k$ and $B_{ki} \geq 0$}, \\
	\delta_{ij} (Y_k + 1)^{-B_{ki}} & \text{if $j \neq k$ and $B_{ki} \leq 0$}, \\
	B_{ki} Y_{i} (Y_k^{-1} + 1)^{-B_{ki}} (Y_k + 1)^{-1} Y_k^{-1}  &
	\text{if $j=k,i\neq k, B_{ki} \geq 0$} ,\\
	-B_{ki} Y_{i} (Y_k + 1)^{-B_{ki}} (Y_k + 1)^{-1} &
	\text{if $j=k,i\neq k, B_{ki} \leq 0$},
	\end{cases} \\
	\label{eq: Y drivative}
	&=
	\begin{cases}
	-Y_{k}^{-2} & \text{if $i=j=k$}, \\
	\delta_{ij} \widetilde{Y}_{i} Y_{i}^{-1} & \text{if $j \neq k$}, \\
	B_{ki} \widetilde{Y}_{i} (Y_k+1)^{-1} Y_{k}^{-1}  &
	\text{if $j=k,i\neq k, B_{ki} \geq 0$}, \\
	-B_{ki}  \widetilde{Y}_{i} (Y_k^{-1}+1)^{-1} Y_k^{-1}  &
	\text{if $j=k,i\neq k, B_{ki} \leq 0$}.
	\end{cases}
\end{align}
Let $J_k(B)$ be the corresponding Jacobian matrix
\begin{align}
J_{k}(B) = \left( \frac{\partial \widetilde{Y}_i}{\partial Y_j}  \right)_{1 \leq i,j \leq n}.
\end{align}
The Jacobian matrix $J_{k}(B)$ can be expressed using the matrices $F_{k,+}(B)$ and $F_{k,-}(B)$ defined in Section \ref{subsection: NZ from X}, as follows.
\begin{proposition}\label{proposition: J plus minus}
	Define rational functions $z_+ , z_- \in \univsf$ by
	\begin{align*}
	z_+ =\frac{Y_k}{Y_k+1} , \;
	z_- =\frac{1}{Y_k+1},
	\end{align*}
	and diagonal matrices $\mathcal{Y}, \widetilde{\mathcal{Y}}$ by
	\begin{align*}
		\mathcal{Y} = 
		\begin{pmatrix}
		Y_1 &   &     \\
		  & \ddots  &   \\
		 &    &  Y_n 
		\end{pmatrix},\;
		\widetilde{\mathcal{Y}} = 
		\begin{pmatrix}
		\widetilde{Y}_1 &  &    \\
		   & \ddots  &   \\
		 &  &  \widetilde{Y}_n  
		\end{pmatrix}.
	\end{align*}
	Then
	\begin{align}
	J_k(B) = \widetilde{\mathcal{Y}} (z_- F_{k,+}(B)^T  + z_+ F_{k,-}(B)^T) \mathcal{Y}^{-1}.
	\end{align}
\end{proposition}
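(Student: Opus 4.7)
The plan is a direct case-by-case verification: unpack the right-hand side, compute its $(i,j)$ entry in each case, and match with the explicit formula for $\partial \widetilde{Y}_i / \partial Y_j$ obtained in \eqref{eq: Y drivative}. No nontrivial combinatorics is needed; what must be handled carefully is the algebraic identity relating $z_+, z_-$ and the two ``sign conventions'' $Y_k+1$ versus $Y_k^{-1}+1$ that appear on the two sides.

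First, I would describe the $(i,j)$ entry of $M := z_- F_{k,+}(B)^T + z_+ F_{k,-}(B)^T$ directly from \eqref{eq:def of F^T}. Splitting into the four cases $j\neq k$ (both matrices agree with the identity), $i=j=k$ (both matrices have $-1$), and $j=k$, $i\neq k$ with $B_{ki}\geq 0$ or $B_{ki}\leq 0$ (in which case one of the two $[\pm B_{ki}]_+$ contributions vanishes), and using the crucial relation $z_++z_-=1$ to combine the first two cases, I would find
\begin{align*}
M_{ij} =
\begin{cases}
\delta_{ij} & j\neq k,\\
-1 & i=j=k,\\
z_- B_{ki} & j=k,\ i\neq k,\ B_{ki}\geq 0,\\
-z_+ B_{ki} & j=k,\ i\neq k,\ B_{ki}\leq 0.
\end{cases}
\end{align*}

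Next, conjugating by the diagonal matrices $\widetilde{\mathcal{Y}}$ on the left and $\mathcal{Y}^{-1}$ on the right multiplies $M_{ij}$ by $\widetilde{Y}_i Y_j^{-1}$, giving the $(i,j)$ entry of $\widetilde{\mathcal{Y}}M\mathcal{Y}^{-1}$. Case by case, I would compare this with \eqref{eq: Y drivative}. The cases $j\neq k$ and $i=j=k$ are immediate; for the latter, one uses $\widetilde{Y}_k=Y_k^{-1}$ to recover $-Y_k^{-2}$. For the $j=k$, $i\neq k$ cases, the factor $Y_k^{-1}$ from $\mathcal{Y}^{-1}$ combines with $z_- = (Y_k+1)^{-1}$ or $z_+ = Y_k(Y_k+1)^{-1}$, respectively.

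The only subtlety — and really the only step that isn't a bookkeeping check — is reconciling the $B_{ki}\leq 0$ case with the expression in \eqref{eq: Y drivative}, which involves $(Y_k^{-1}+1)^{-1}Y_k^{-1}$. I would note the elementary identity
\begin{align*}
(Y_k^{-1}+1)^{-1} Y_k^{-1} = \frac{Y_k^{-1}}{Y_k^{-1}+1} = \frac{1}{Y_k+1} = z_-,
\end{align*}
and symmetrically $(Y_k+1)^{-1}Y_k = z_+$, after which the match is immediate. This identity is the one place that merits explicit mention; everything else is a routine verification. Once all four cases have been checked, the proposition follows.
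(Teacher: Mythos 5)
Your verification is correct, and it is essentially the paper's own argument: the paper's proof simply states that the proposition follows from \eqref{eq: Y drivative} together with $z_+ + z_- = 1$, which is exactly the entrywise case check you carry out in detail. The identity $(Y_k^{-1}+1)^{-1}Y_k^{-1} = (Y_k+1)^{-1} = z_-$ that you single out is the right reconciliation of the two sign conventions, so nothing is missing.
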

\begin{proof}
	This follows from \eqref{eq: Y drivative} and the fact that $z_+ + z_- = 1$.
\end{proof}

\subsection{Determinant formula for the Jacobian matrix}
Let $\gamma$ be a fully mutated mutation sequence, and $\mu_\gamma$ be the cluster transformation of $\gamma$, defined by \eqref{eq: def of f}.
In addition, let $J_\gamma (y)$ be the Jacobian matrix of $\mu_\gamma$:
\begin{align}
J_\gamma (y) = \left( \frac{\partial \mu_i}{\partial y_j} \right)_{1 \leq i,j \leq n},
\end{align}
where $\mu_i$ is the $i$-th component of $\mu_\gamma(y)$.
Let $\mathcal{Y}(t)$ be the $n \times n$ diagonal matrix whose $i$-th entry
is $Y_i(t)$:
\begin{align*}
\mathcal{Y}(t)=
\begin{pmatrix}
Y_1(t) &  &    \\
 & \ddots  &   \\
 &    &  Y_n(t) \\
\end{pmatrix}.
\end{align*}
Next, define the $n \times n$ matrix $K_\gamma(y)$ by
\begin{align}\label{eq: def of K}
K_\gamma(y) = \mathcal{Y}(T)^{-1} J_\gamma(y) \mathcal{Y}(0).
\end{align}
Let $Z_{+}(y)$ and $Z_{-}(y)$ be the $T \times T$ diagonal matrices whose $t$-th entries are $z_{t,+}$ and $z_{t,-}$, which were defined in \eqref{eq: def of z}:
\begin{align}\label{eq: def of Z}
Z_{+}(y) = 
\begin{pmatrix}
z_{1,+} &  &     \\
& \ddots  &   \\
&  &  z_{T,+}  \\
\end{pmatrix},\;
Z_{-}(y) = 
\begin{pmatrix}
z_{1,-} &  &     \\
& \ddots  &   \\
&  &  z_{T,-}  \\
\end{pmatrix}.
\end{align}
\begin{theorem}\label{theorem:det formula}
	Let $\gamma$ be a fully mutated mutation sequence.
	Then
	\begin{align}\label{eq:det formula}
	\det(I_n - K_\gamma(y)) = \det (A_+ Z_-(y) + A_-  Z_+(y) ).
	\end{align}
\end{theorem}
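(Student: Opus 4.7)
The plan is to run the same block-matrix argument that underlies Theorem \ref{theorem: det formula of F}, but with the two matrices $F_{m_t,\pm}^T$ replaced by their $z$-weighted average dictated by Proposition \ref{proposition: J plus minus}.

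First, I would compute $K_\gamma(y)$ explicitly by the chain rule. Since $\mu_\gamma = (\sigma_T\circ\mu_{m_T})\circ\cdots\circ(\sigma_1\circ\mu_{m_1})$, the Jacobian factors as $J_\gamma(y) = P_{\sigma_T}J_{m_T}(B(T-1))\cdots P_{\sigma_1}J_{m_1}(B(0))$. Applying Proposition \ref{proposition: J plus minus} gives
\[
J_{m_t}(B(t-1)) = \mathcal{Y}'(t)\,M_t\,\mathcal{Y}(t-1)^{-1},\qquad
M_t := z_{t,-}F_{m_t,+}(B(t-1))^T + z_{t,+}F_{m_t,-}(B(t-1))^T,
\]
where $\mathcal{Y}'(t)$ is the $Y$-diagonal before the permutation. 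A short computation shows $P_{\sigma_t}\mathcal{Y}'(t) = \mathcal{Y}(t)P_{\sigma_t}$, and the $\mathcal{Y}$ factors telescope, leaving
\[
K_\gamma(y) = \mathcal{Y}(T)^{-1}J_\gamma(y)\mathcal{Y}(0) = P_{\sigma_T}M_T\cdots P_{\sigma_1}M_1.
\]

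Second, in direct analogy with \eqref{eq: def of L}, introduce the $T\times T$ block matrix $L_Z$ with block $P_{\sigma_t}M_t$ in position $(t,t-1)$ (cyclically). Since $L_Z = \widehat{Z}_- L_+ + \widehat{Z}_+ L_-$ where $\widehat{Z}_\pm$ is block-diagonal with $z_{t,\pm}I_n$ on the $t$-th block, the standard cyclic block determinant identity yields
\[
\det(I_{nT} - L_Z) = \det\bigl(I_n - P_{\sigma_T}M_T\cdots P_{\sigma_1}M_1\bigr) = \det(I_n - K_\gamma(y)),
\]
which is the left-hand side of \eqref{eq:det formula}.

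Third, I set $X_Z := (I - \alpha_{m,\sigma})^{-1}(I - L_Z)$. The key observation, exactly as in the proof of Lemma \ref{lemma: NZ from X}(i), is that the columns of $L_Z-\alpha_{m,\sigma}$ vanish except at positions $(m_t,t-1)$: indeed, $M_t$ and $H_{m_t}$ agree outside column $m_t$ since $z_{t,+}+z_{t,-}=1$. Hence $X_Z$ is the identity except in the $T$ distinguished columns, and since $\alpha_{m,\sigma}$ is nilpotent, $\det(I - L_Z) = \det X_Z$ equals the determinant of the $T\times T$ principal minor of $X_Z$ with rows and columns indexed by $(m_s,s-1)$. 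Repeating the computation at the end of the proof of Lemma \ref{lemma: NZ from X}(ii) verbatim, but averaging the $\varepsilon=\pm$ terms with weights $z_{t,-},z_{t,+}$ and using $z_{t,+}+z_{t,-}=1$ to rewrite the Kronecker delta, the $(s,t)$-entry of this minor becomes
\[
n_{et}^{0} - z_{t,-}n_{et}^{+} - z_{t,+}n_{et}^{-} = z_{t,-}a_{et}^{+} + z_{t,+}a_{et}^{-},\qquad e=[(m_s,s-1)],
\]
which is the $(e,t)$-entry of $A_+Z_-(y)+A_-Z_+(y)$. This identifies the minor with $A_+Z_-(y)+A_-Z_+(y)$ and completes the proof.

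The main obstacle I anticipate is the bookkeeping in the first step: correctly tracking how the $\mathcal{Y}$ diagonals conjugate past the permutation matrices so that the telescoping really produces $\mathcal{Y}(T)$ on the left and $\mathcal{Y}(0)^{-1}$ on the right. Once $K_\gamma$ is in the clean form $P_{\sigma_T}M_T\cdots P_{\sigma_1}M_1$, the remainder of the argument is a weighted rerun of Theorem \ref{theorem: det formula of F} whose combinatorial content is already encoded in Lemmas \ref{lemma: n0} and \ref{lemma: n epsilon}.
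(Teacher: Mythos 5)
Your proposal is correct and follows essentially the same route as the paper: the chain rule plus Proposition \ref{proposition: J plus minus}, a cyclic block matrix built from the $z$-weighted averages $z_{t,-}F_{m_t,+}^T+z_{t,+}F_{m_t,-}^T$, nilpotency of $\alpha_{m,\sigma}$, and the entry computation of Lemma \ref{lemma: NZ from X} (which the paper invokes via $X_+\mathcal{Z}_-+X_-\mathcal{Z}_+$ and $\mathcal{Z}_++\mathcal{Z}_-=I$). The only difference is cosmetic: you telescope the $\mathcal{Y}$-diagonals to get $K_\gamma=P_{\sigma_T}M_T\cdots P_{\sigma_1}M_1$ before forming the block matrix, whereas the paper keeps the Jacobians in the block matrix $\mathcal{L}$ and conjugates it by a block-diagonal $\mathcal{Y}$.
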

\begin{proof}
	Define the block matrix $\mathcal{L}$ by
	\begin{align*}
		\mathcal{L}= 
		\begin{pmatrix}
		O &  \cdots &O& O &  P_{\sigma_1} J_{m_1}\mathcal{Y}(0) \\
		P_{\sigma_2} J_{m_2} & \cdots & O&O & O \\
		\vdots &  \ddots & \vdots & \vdots& \vdots \\
		O & \cdots & P_{\sigma_{T-1}} J_{m_{T-1}} &O& O \\
		O  &  \cdots & O& \mathcal{Y}(T) ^{-1} P_{\sigma_T} J_{m_T} &O \\
		\end{pmatrix},
	\end{align*}
	where we have denoted the Jacobian matrix $J_{m_t}(B(t-1))$ by $J_{m_t}$.
	Let $\mathcal{Y}$ be the diagonal matrix consisting of $T$ blocks where the $t$-th block is $\mathcal{Y}(t)$ for $t=1,\dots , T-1 $ and the identity matrix $I_n$ for $t=T$:
	\begin{align*}
	\mathcal{Y}=
	\begin{pmatrix}
	\mathcal{Y}(1)  & \cdots & O &  O   \\
	\vdots  & \ddots & \vdots  &  \vdots \\
	O  & \cdots & \mathcal{Y}(T-1) & O \\
	O & \cdots & O & I_n 
	\end{pmatrix}.
	\end{align*}
	Let $\mathcal{Z}_+$ and $\mathcal{Z}_-$ be the diagonal matrix consisting of $T$ blocks whose $t$-th block is $z_{t+1,+} I_n$ and $z_{t+1,-} I_n$ for $t=1, \dots, T-1$, and $T$-th block is $z_{1,+} I_n$ and $z_{1,-} I_n$:
	\begin{align*}
	\mathcal{Z}_+=
	\begin{pmatrix}
	z_{2,+} I_n  & \cdots & O &  O   \\
	\vdots  & \ddots & \vdots  &  \vdots \\
	O  & \cdots & z_{T,+} I_n & O \\
	O & \cdots & O & z_{1,+} I_n 
	\end{pmatrix}, \;
	\mathcal{Z}_-=
	\begin{pmatrix}
	z_{2,-} I_n  & \cdots & O &  O   \\
	\vdots  & \ddots & \vdots  &  \vdots \\
	O  & \cdots & z_{T,-} I_n & O \\
	O & \cdots & O & z_{1,-} I_n 
	\end{pmatrix}.
	\end{align*}
	Because
	\begin{align*}
	J_{m_t} = P_{\sigma_t}^{-1} \mathcal{Y}(t) P_{\sigma_t} (z_{t,-} F_{m_t,+}^T  + z_{t,+} F_{m_t,-}^T )  \mathcal{Y}(t-1)^{-1}
	\end{align*}
	by Proposition \ref{proposition: J plus minus}, the matrix $\mathcal{L}$ can be expressed as follows:
	\begin{align*}
	\mathcal{L} 
	= \mathcal{Y} ( L_{+} \mathcal{Z}_- + L_{-} \mathcal{Z}_+) \mathcal{Y}^{-1},
	\end{align*}
	where $L_{\pm}$ are defined by \eqref{eq: def of L}.
	Thus, we find that
	\begin{align*}
	& \mathcal{Y} (I -  \alpha_{m,\sigma} )^{-1} \mathcal{Y}^{-1} (I - \mathcal{L}) \\
	=&  \mathcal{Y}(I -  \alpha_{m,\sigma} )^{-1}  ( I -  L_{+} \mathcal{Z}_- - L_{-} \mathcal{Z}_+ ) \mathcal{Y}^{-1} \\
	=& \mathcal{Y} (  (I -  \alpha_{m,\sigma} )^{-1}
	-  (I -  \alpha_{m,\sigma} )^{-1} L_{+} \mathcal{Z}_-  -  (I -  \alpha_{m,\sigma} )^{-1} L_{-} \mathcal{Z}_+ ) \mathcal{Y}^{-1} \\
	=& \mathcal{Y} (   (I -  \alpha_{m,\sigma} )^{-1} (I - \mathcal{Z}_- - \mathcal{Z}_+)
	+  X_{+} \mathcal{Z}_- + X_{-} \mathcal{Z}_+ ) \mathcal{Y}^{-1} \\
	=& \mathcal{Y} (
	X_{+}  \mathcal{Z}_-
	+  X_{-}  \mathcal{Z}_+ ) \mathcal{Y}^{-1}.
	\end{align*}
	Using Lemma \ref{lemma: NZ from X}, we obtain
	\begin{align*}
	\det(X_{+}  \mathcal{Z}_-
	+  X_{-}  \mathcal{Z}_+) = \det (A_+ Z_-(y) + A_-  Z_+(y) ).
	\end{align*}
	This implies that
	\begin{align*}
	\det(I - \mathcal{L}) = \det (A_+ Z_-(y) + A_-  Z_+(y) ),
	\end{align*}
	since $\det (I - \alpha_{m,\sigma}) =1$.
	
	On the other hand, the chain rule for Jacobian matrices implies that
	\begin{align*}
	J_{\gamma}(y) = P_{\sigma_T} J_{m_T} \cdots  P_{\sigma_1} J_{m_1}.
	\end{align*}
	Therefore,
	\begin{equation*}
	\begin{split}
	\det (I - \mathcal{L}) &= \det (I_n - \mathcal{Y}(T)^{-1} P_{\sigma_T} J_{m_T} \cdots  P_{\sigma_1} J_{m_1} \mathcal{Y}(0)) \\
	&= \det (I_n - \mathcal{Y}(T)^{-1} J_\gamma(y) \mathcal{Y}(0)) \\
	&=\det(I_n - K_\gamma(y)),
	\end{split}
	\end{equation*}
	and we can conclude that
	\begin{align*}
	\det (I_n - K_\gamma(y)) =   \det (A_+ Z_-(y) + A_-  Z_+(y) ).
	\end{align*}
\end{proof}

\begin{example}\label{example: main theorem}
	Set $T$ to be a fixed non-negative integer, and
	let $\gamma$ be the mutation loop defined in Example \ref{example: mutation loop}.
	By the definition of mutation networks (Section \ref{subsection: mutation network}),
	we have
	\begin{align*}
	\network_{\gamma} = 
	\bigcup_{t \in \Z / T \Z }\;
	\begin{tikzpicture}
	[baseline={([yshift=-.5ex]current bounding box.center)},scale=1.7,auto=left,black_vertex/.style={circle,draw,fill,scale=0.75},square_vertex/.style={rectangle,scale=0.8,draw}]
	\node (t) at (0,0) [square_vertex][label=above right:$t$]{};
	\node (e0) at (0,1) [black_vertex][label=right:$t$]{};
	\node (e1) at (0.7,0.3) [black_vertex][label=above:$t+1$]{};
	\node (e2) at (0.7,-0.3) [black_vertex][label=above:$t+2$]{};
	\node (e3) at (-0.7,-0.3) [black_vertex][label=above:$t+3$]{};
	\node (e4) at (-0.7,0.3) [black_vertex][label=above:$t+4$]{};
	\node (e5) at (0,-1) [black_vertex][label=right:$t+5$]{};
	\draw [dashed] (e0)edge(t);
	\draw [dashed] (e5)edge(t);
	\draw [arrows={-Stealth[scale=1.2] Stealth[scale=1.2]}] (t)edge (e2);
	\draw [arrows={-Stealth[scale=1.2] Stealth[scale=1.2]}] (t)edge (e3);
	\draw [arrows={-Stealth[scale=1.2]}] (e1)edge (t);
	\draw [arrows={-Stealth[scale=1.2]}] (e4)edge (t);
	\end{tikzpicture}.
	\end{align*}
	
	If we now assume that $T = 3$, this becomes
	\begin{align*}
	\network_{\gamma} = 
	\begin{tikzpicture}
	[baseline={([yshift=-.5ex]current bounding box.center)},scale=1.7,auto=left,black_vertex/.style={circle,draw,fill,scale=0.75},square_vertex/.style={rectangle,scale=0.8,draw}]
	\node (e1) at (90:1) [black_vertex][label=above:$1$]{};
	\node (t1) at (90-60:1) [square_vertex][label=right:$1$]{};
	\node (e3) at (90-60*2:1) [black_vertex][label=right:$3$]{};
	\node (t3) at (90-60*3:1) [square_vertex][label=below:$3$]{};
	\node (e2) at (90-60*4:1) [black_vertex][label=left:$2$]{};
	\node (t2) at (90-60*5:1) [square_vertex][label=left:$2$]{};
	\draw [dashed] (e1)edge [bend right=15](t1);
	\draw [dashed] (t1)edge [bend right=15](e3);
	\draw [dashed] (e3)edge [bend right=15](t3);
	\draw [dashed] (t3)edge [bend right=15](e2);
	\draw [dashed] (e2)edge [bend right=15](t2);
	\draw [dashed] (t2)edge [bend right=15](e1);
	\draw [arrows={-Stealth[scale=1.2] Stealth[scale=1.2]}] (t1)edge [bend right=15](e1);
	\draw [arrows={-Stealth[scale=1.2] Stealth[scale=1.2]}] (t1)edge [bend left=15](e3);
	\draw [arrows={-Stealth[scale=1.2] Stealth[scale=1.2]}] (t3)edge [bend right=15](e3);
	\draw [arrows={-Stealth[scale=1.2] Stealth[scale=1.2]}] (t3)edge [bend left=15](e2);
	\draw [arrows={-Stealth[scale=1.2] Stealth[scale=1.2]}] (t2)edge [bend right=15](e2);
	\draw [arrows={-Stealth[scale=1.2] Stealth[scale=1.2]}] (t2)edge [bend left=15](e1);
	\draw [arrows={-Stealth[scale=1.2] Stealth[scale=1.2]}] (e2)edge (t1);
	\draw [arrows={-Stealth[scale=1.2] Stealth[scale=1.2]}] (e3)edge (t2);
	\draw [arrows={-Stealth[scale=1.2] Stealth[scale=1.2]}] (e1)edge (t3);
	\end{tikzpicture}.
	\end{align*}
The Neumann-Zagier matrices can be obtained from the adjacency matrices of $\network_\gamma$ by \eqref{eq: def of A+} and \eqref{eq: def of A-}, yielding
\begin{align*}
	A_+ &= N_0 - N_+ \\
	&= 
	\begin{pmatrix}
	1 & 1 & 0 \\
	0 & 1 & 1 \\
	1 & 0 & 1
	\end{pmatrix} -
	\begin{pmatrix}
	2 & 2 & 0 \\
	0 & 2 & 2 \\
	2 & 0 & 2
	\end{pmatrix} \\
	&=
	\begin{pmatrix}
	-1 & -1 & 0 \\
	0 & -1 & -1 \\
	-1 & 0 & -1
	\end{pmatrix}, \\
	A_- &= N_0 - N_- \\
	&= 
	\begin{pmatrix}
	1 & 1 & 0 \\
	0 & 1 & 1 \\
	1 & 0 & 1
	\end{pmatrix} -
	\begin{pmatrix}
	0 & 0 & 2 \\
	2 & 0 & 0 \\
	0 & 2 & 0
	\end{pmatrix} \\
	&=
	\begin{pmatrix}
	1 & 1 & -2 \\
	-2 & 1 & 1 \\
	1 & -2 & 1
	\end{pmatrix}.
\end{align*}
The matrices $Z_+(y)$ and $Z_-(y)$ (defined by \eqref{eq: def of Z}) are as follows:
\begin{align*}
&Z_+ (y)= 
\begin{pmatrix}
\frac{ y_1}{ y_1 + 1} & 0 & 0 \\
0 & \frac{ y_2  (y_1 + 1)}{y_1 y_2 + y_2 + 1} & 0\\
0 & 0 &  \frac{ y_1^{2}y_3  (y_1 y_2 + y_2 + 1)}{y_1^{3} y_2 y_3 + y_1^{2} y_2 y_3 + y_1^{2} y_3 + y_1^{2} + 2 y_1 + 1}
\end{pmatrix},\\
&Z_-(y) = 
\begin{pmatrix}
\frac{1}{y_1 + 1} & 0 & 0 \\
0 &\frac{1}{y_1 y_2 + y_2 + 1} & 0\\
0 & 0 &  \frac{ (y_1 +1)^{2} }{y_1^{3} y_2 y_3 + y_1^{2} y_2 y_3 + y_1^{2} y_3 + y_1^{2} + 2 y_1 + 1}
\end{pmatrix} .
\end{align*}
Theorem \ref{theorem:det formula} now gives
\begin{align*}
&\quad \det (I_5 - K_\gamma (y))  \\
&= \det(A_+ Z_-(y) + A_- Z_+(y)) \\
&= -2   (y_1 + 1)^{-1}   (y_1 y_2 + y_2 + 1)^{-1}   (y_1^3 y_2 y_3 + y_1^2 y_2 y_3 + y_1^2 y_3 + y_1^2 + 2 y_1 + 1)^{-1}   \\
&\quad \cdot (3 y_1^4 y_2^2 y_3 + 3 y_1^4 y_2 y_3 + 6 y_1^3 y_2^2 y_3 + 3 y_1^4 y_2 + 4 y_1^3 y_2 y_3  + 3 y_1^2 y_2^2 y_3 + 7 y_1^3 y_2 \\ 
&\quad\quad + 3 y_1^3 y_3 + y_1^2 y_2 y_3 - 2 y_1^3 + 3 y_1^2 y_2 - 2 y_1^2 y_3 - 3 y_1^2 - 3 y_1 y_2 - 2 y_2 + 1).
\end{align*}
\end{example}

For a fully mutated mutation sequence $\gamma$,
we denote the right-hand side of \eqref{eq:det formula} by $\tau_{\gamma} (y)$.
Since the left-hand side of \eqref{eq:det formula} depends only on the cluster transformation, we obtain the following:
\begin{corollary}\label{corollary: tau = tau'}
	Suppose that $\gamma$ and $\gamma'$ are fully mutated mutation sequences.
	Then $\mu_\gamma(y) = \mu_{\gamma'}(y)$ implies $\tau_{\gamma}(y) = \tau_{\gamma'}(y)$.
\end{corollary}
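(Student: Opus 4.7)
The plan is to exploit the fact that Theorem \ref{theorem:det formula} rewrites $\tau_\gamma(y)$, which is \emph{a priori} an invariant of the mutation sequence $\gamma$ (built from the combinatorics of $\network_\gamma$ and the $Y$-values at mutation points), as a quantity built entirely from the cluster transformation $\mu_\gamma$. Once that is in hand, two mutation sequences with identical cluster transformations must produce the same $\tau$.

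Concretely, I would first unpack the definition of $K_\gamma(y) = \mathcal{Y}(T)^{-1} J_\gamma(y)\, \mathcal{Y}(0)$ in \eqref{eq: def of K}. The factor $\mathcal{Y}(0) = \diag(y_1,\dots,y_n)$ is the same for every mutation sequence on $n$ variables. The factor $\mathcal{Y}(T) = \diag(Y_1(T),\dots,Y_n(T)) = \diag(\mu_1(y),\dots,\mu_n(y))$ depends on $\gamma$ only through the rational function $\mu_\gamma$. Finally, $J_\gamma(y)$ is by definition the Jacobian of $\mu_\gamma$. Consequently, the hypothesis $\mu_\gamma(y) = \mu_{\gamma'}(y)$ forces both $\mathcal{Y}(T) = \mathcal{Y}'(T)$ and $J_\gamma(y) = J_{\gamma'}(y)$, hence
\begin{equation*}
K_\gamma(y) = K_{\gamma'}(y).
\end{equation*}

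Second, I would apply Theorem \ref{theorem:det formula} (which requires $\gamma$ and $\gamma'$ to be fully mutated, as assumed) to both sides and chain the equalities:
\begin{equation*}
\tau_\gamma(y) = \det(A_+ Z_-(y) + A_- Z_+(y)) = \det(I_n - K_\gamma(y)) = \det(I_n - K_{\gamma'}(y)) = \tau_{\gamma'}(y).
\end{equation*}

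There is essentially no obstacle in the corollary itself; all of the substance sits inside Theorem \ref{theorem:det formula}. The only point worth mentioning for clarity is that although the matrices $A_\pm$, $Z_\pm$ have size $T \times T$ (and $T$ may differ for $\gamma$ and $\gamma'$), the middle expression $\det(I_n - K_\gamma(y))$ is intrinsically $n \times n$ and depends only on $\mu_\gamma$, so no compatibility issue between the sizes of $\tau_\gamma$ and $\tau_{\gamma'}$ arises.
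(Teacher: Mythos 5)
Your proposal is correct and follows exactly the paper's argument: the left-hand side $\det(I_n - K_\gamma(y))$ of Theorem \ref{theorem:det formula} depends only on the cluster transformation $\mu_\gamma$, so equal cluster transformations force equal values of $\tau$. You simply spell out in more detail why $K_\gamma(y)$ is determined by $\mu_\gamma$, which matches the paper's (terser) reasoning.
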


\begin{example}
\label{example: tau=tau'}
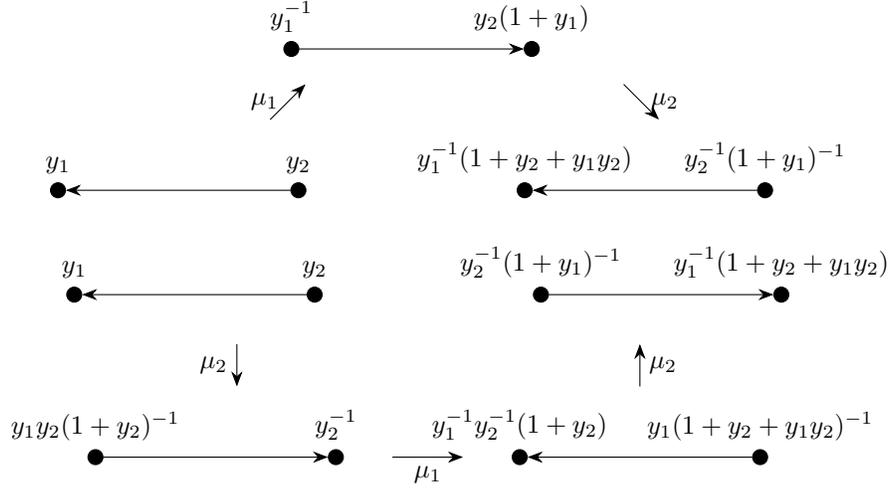
\begin{figure}
		\begin{tikzpicture}[scale=0.94]
		\node (1) at (0,0)[circle,scale=0.6,draw,fill] [label=above:$y_1$] {};
		\node (2) at (3+0.4,0) [circle,scale=0.6,draw,fill] [label=above:$y_2$] {};
		\draw[arrows={-Stealth[scale=1.2]}] (2)--(1) ;
		\draw[arrows={-Stealth[scale=1.2]}] (3,1.0)--(3.5,1.5) node[midway,left]{$\mu_1$};
		\node (1') at (3.5-0.2,2)[circle,scale=0.6,draw,fill] [label=above:$y_1^{-1}$]{};
		\node (2') at (6.5+0.2,2)[circle,scale=0.6,draw,fill] [label=above:$y_2(1+y_1)$] {};
		\draw[arrows={-Stealth[scale=1.2]}] (1')--(2');
		\draw[arrows={-Stealth[scale=1.2]}] (8,1.5)--(8.5,1.0) node[midway,right]{$\mu_2$};
		\node (1'') at (7-0.4,0) [circle,scale=0.6,draw,fill] [label=above:$y_1^{-1}(1+y_2+y_1 y_2)$] {};
		\node (2'') at (10,0) [circle,scale=0.6,draw,fill] [label=above:$y_2^{-1}(1+y_1)^{-1}$] {};
		\draw[arrows={-Stealth[scale=1.2]}] (2'')--(1'');
		\end{tikzpicture}
		\vspace{5mm}
		\\
		\begin{tikzpicture}[scale=0.94]
		\node (1) at (0,0)[circle,scale=0.6,draw,fill] [label=above:$y_1$] {};
		\node (2) at (3+0.4,0) [circle,scale=0.6,draw,fill] [label=above:$y_2$] {};
		\draw[arrows={-Stealth[scale=1.2]}] (2)--(1);
		\draw[arrows={-Stealth[scale=1.2]}] (2.3,-1+0.3)--(2.3,-1-0.3)node[midway,left]{$\mu_2$};
		\node (1') at (0.3,-2.3)[circle,scale=0.6,draw,fill] [label=above:$y_1y_2(1+y_2)^{-1}$]{};
		\node (2') at (0.2+3.5,-2.3)[circle,scale=0.6,draw,fill] [label=above:$y_2^{-1}$] {};
		\draw[arrows={-Stealth[scale=1.2]}] (1')--(2');
		\draw[arrows={-Stealth[scale=1.2]}] (8,-1-0.3)--(8,-1+0.3) node[midway,right]{$\mu_2$};
		\node (1'') at (7-0.7,-2.3)[circle,scale=0.6,draw,fill] [label=above:$y_1^{-1}y_2^{-1}(1+y_2)$]{};
		\node (2'') at (10-0.3,-2.3)[circle,scale=0.6,draw,fill] [label=above:$y_1(1+y_2+y_1y_2)^{-1}$] {};
		\draw[arrows={-Stealth[scale=1.2]}] (2'')--(1'');
		\node (1''') at (7-0.4,0) [circle,scale=0.6,draw,fill] [label=above:$y_2^{-1}(1+y_{1})^{-1}$] {};
		\node (2''') at (10,0) [circle,scale=0.6,draw,fill] [label=above:$y_1^{-1}(1+y_2+y_1y_2)$] {};
		\draw[arrows={-Stealth[scale=1.2]}] (1''')--(2''');
		\draw[arrows={-Stealth[scale=1.2]}] (4.5,-2.3)--(5.5,-2.3) node[midway,below]{$\mu_1$};
		\end{tikzpicture}
		\caption{Two mutation loops $\gamma = (B,(1,2),(\id,\id))$ and $\gamma'=(B,(2,1,2),(\id,\id, (1 \; 2)))$ yield the same cluster transformations.}
		\label{figure: gamma and gamma'}
\end{figure}
Let $B$ be the matrix defined by
\begin{align*}
B=
\begin{pmatrix}
0 & -1 \\
1 & 0
\end{pmatrix},
\end{align*}
and define two mutation loops by
$\gamma = (B, (1,2) , (\id,\id))$ and $\gamma' =(B, (2,1,2), (\id, \id, (1 \; 2))$.
Then we can see that the cluster transformations of $\gamma$ and $\gamma'$ coincide (see Figure \ref{figure: gamma and gamma'}):
\begin{align*}
	\mu_{\gamma}(y) = \mu_{\gamma'}(y) = ( y_1^{-1}(1+y_2+y_1 y_2) , y_2^{-1}(1+y_1)^{-1} ).
\end{align*}
The mutation networks of these mutation loops are given by
\begin{align*}
\mathcal{N}_\gamma=
\;
\begin{tikzpicture}
[baseline={([yshift=-.5ex]current bounding box.center)},scale=1.4,auto=left,black_vertex/.style={circle,draw,fill,scale=0.75},square_vertex/.style={rectangle,scale=0.8,draw}]
\node (a) at (0,0) [black_vertex][label=right:$1$]{};
\node (b) at (1,0) [black_vertex][label=right:$2$]{}; 
\node (t1) at (0,-1) [square_vertex][label=right:$1$]{};
\node (t2) at (1,-1) [square_vertex][label=right:$2$]{};
\draw [dashed] (a)edge [bend left=15](t1);
\draw [dashed] (t1)edge [bend left=15](a);
\draw [dashed] (b)edge [bend left=15](t2);
\draw [dashed] (t2)edge [bend left=15](b);
\draw[arrows={-Stealth[scale=1.2]}] (b)--(t1);
\draw[arrows={-Stealth[scale=1.2]}] (a)--(t2);
\end{tikzpicture},\quad
\mathcal{N}_{\gamma'} =
\begin{tikzpicture}
[baseline={([yshift=-.5ex]current bounding box.center)},scale=1.4,auto=left,black_vertex/.style={circle,draw,fill,scale=0.75},square_vertex/.style={rectangle,scale=0.8,draw}]
\node (e1) at (90:1) [black_vertex][label=above:$1$]{};
\node (t1) at (90-60:1) [square_vertex][label=right:$1$]{};
\node (e2) at (90-60*2:1) [black_vertex][label=right:$1$]{};
\node (t2) at (90-60*3:1) [square_vertex][label=below:$2$]{};
\node (e3) at (90-60*4:1) [black_vertex][label=left:$3$]{};
\node (t3) at (90-60*5:1) [square_vertex][label=left:$3$]{};
\draw [dashed] (e1)edge [](t1);
\draw [dashed] (t1)edge [](e2);
\draw [dashed] (e2)edge [](t2);
\draw [dashed] (t2)edge [](e3);
\draw [dashed] (e3)edge [](t3);
\draw [dashed] (t3)edge [](e1);
\draw [arrows={-Stealth[scale=1.2]}] (t2)edge (e1);
\draw [arrows={-Stealth[scale=1.2]}] (t3)edge (e2);
\draw [arrows={-Stealth[scale=1.2]}] (t1)edge (e3);
\end{tikzpicture}.
\end{align*}
Thus we have
\begin{align*}
	\tau_{\gamma}(y) 
	&= \det \left(
	\begin{pmatrix}
	2&0\\
	0&2
	\end{pmatrix}Z_- (y) +
	\begin{pmatrix}
	2&-1\\
	-1&2
	\end{pmatrix}Z_+ (y) \right) \\
	&=\det
	\begin{pmatrix}
	2 & -\frac{y_1}{1+y_1} \\
	-\frac{y_2(1+y_1)}{1+y_2+y_1 y_2} & 2
	\end{pmatrix}
\end{align*}
and
\begin{align*}
\tau_{\gamma'}(y) &= \det \left(
\begin{pmatrix}
1&-1&1\\
1&1&-1\\
-1&1&1
\end{pmatrix} Z'_- (y) +
\begin{pmatrix}
1&0&1\\
1&1&0\\
0&1&1
\end{pmatrix} Z'_+ (y) \right) \\
	&= \det
	\begin{pmatrix}
	1 & -\frac{1 + y_2}{ 1+y_2 + y_1 y_2 } & 1\\
	1 & 1 & -\frac{1+y_2 + y_1 y_2}{(1+y_1)(1+y_2)} \\
	-\frac{1}{1+y_2} & 1 & 1
	\end{pmatrix},
\end{align*}
where $Z_{\pm}(y)$ and $Z'_{\pm}(y)$ are the matrices in \eqref{eq: def of Z} for $\gamma$ and $\gamma'$, respectively.
Thus we obtain
\begin{align*}
\det
\begin{pmatrix}
2 & -\frac{y_1}{1+y_1} \\
-\frac{y_2(1+y_1)}{1+y_2+y_1 y_2} & 2
\end{pmatrix} 
= \det
\begin{pmatrix}
1 & -\frac{1 + y_2}{ 1+y_2 + y_1 y_2 } & 1\\
1 & 1 & -\frac{1+y_2 + y_1 y_2}{(1+y_1)(1+y_2)} \\
-\frac{1}{1+y_2} & 1 & 1
\end{pmatrix}
\end{align*}
by Corollary \ref{corollary: tau = tau'}.
In fact, we can see that both sides are equal to 
\begin{align*}
	 \frac{4+4y_2 +3 y_1 y_2}{1+y_2 + y_1 y_2} .
\end{align*}
\end{example}

\begin{remark}
	Theorem \ref{theorem:det formula} is purely presented in the language of cluster algebras, but their background is in three-dimensional topology.
	If a mutation sequence is what we considered in Section \ref{section: surface examples} and therefore is related to a 3-manifold,
	the left-hand side of \ref{eq:det formula} is related to Kitayama-Terashima's formula for Reidemeister torsion~\cite{KitayamaTerashima},
	while the right-hand side is related to Dimofte-Garoufalidis's conjectural formula for Reidemeister torsion~\cite{DimofteGaroufalidis}.
	We will discuss the more precise relationship in the future.
\end{remark}

\section{The $y \to 0$ limit in the determinant formula}\label{section: y->0 limit}
In this section, we describe the relation between Theorem \ref{theorem:det formula} and Y-seed mutations in the tropical semifields.
We show that a determinant formula for $C$-matrices, matrices describing Y-seeds in the tropical semifields, are obtained by taking ``$y \to 0$ limit'' in \eqref{eq:det formula}.
\subsection{Y-seed mutations in tropical semifields and  $\mathbf{c}$-vectors}
A \emph{semifield} is a multiplicative abelian group endowed with an (auxiliary) addition $\oplus$ that is commutative, associative, and distributive with respect to the multiplication.
The set $\univsf$ of subtraction-free rational expressions in the variables $y_1 ,\dots, y_n$ over $\Q$ used in the previous sections is a semifield with respect to the usual multiplication and the addition.
The set $\univsf$ is called an \emph{universal semifield}.

Another important semifields that we consider is tropical semifields.
\begin{definition}
	Let $\tropsf$ be the set of Laurent monomials in the variables $y_1 ,\dots , y_n$ equipped with the usual multiplication and the tropical addition $\oplus$ defined by
	\begin{align}
	\prod_{i=1}^n y_i^{a_i} \oplus \prod_{i=1}^n y_i^{b_i} = \prod_{i=1}^n y_i^{\min(a_i, b_i)}.
	\end{align}
	The set $\tropsf$ with these operators is called an \emph{tropical semifield}.
\end{definition}

We say that a pair $(B,Y)$ is a Y-seed in the tropical semifield if $B$ is an $n\times n$ skew-symmetrizable matrix and $Y \in \tropsf^n$.
We define Y-seed mutations in the tropical semifield by $\mu_k (B,Y) = (\widetilde{B}, \widetilde{Y})$ where $\widetilde{B}$ is the same as \eqref{eq: B mutation}, and
$\widetilde{Y}$ is defined by replacing $+$ in \eqref{eq: Y mutation} with $\oplus$:
\begin{align}
	\label{eq: Y mutation tropical}
\widetilde{Y}_i =
&\begin{cases}
Y_{k}^{-1} & \text{if $i=k$}, \\
Y_{i}  (Y_k^{-1} \oplus 1)^{-B_{ki}} & \text{if $i \neq k$, $B_{ki} \geq 0$}, \\
Y_{i}  (Y_k \oplus 1)^{-B_{ki}} & \text{if $i \neq k$, $B_{ki} \leq 0$}.
\end{cases} 
\end{align}

Let $\gamma=(B,m,\sigma)$ be a mutation sequence.
Then we have the following transitions of Y-seeds in the tropical semifield:
\begin{align}\label{intro:Y transition tropical}
(B(0),Y(0)) \xrightarrow{\sigma_1 \circ \mu_{m_1}} (B(1),Y(1)) \xrightarrow{\sigma_2 \circ \mu_{m_2}} \cdots
\xrightarrow{\sigma_T \circ \mu_{m_T}} (B(T),Y(T))
\end{align}
where $B(0)=B$ and $Y(0) = (y_1 , \dots , y_n)$. 
Now we have $Y(t) \in \tropsf^n$ for each $t=0,\dots ,T$.
Since these are Laurent monomials in $y_1 , \dots , y_n$,
we can define the integers $c_{ij} (t)$ for each $ 1 \leq i,j \leq n$ and $0 \leq t \leq T$ by the following expression:
\begin{align}
	Y_j(t) =  y_1^{c_{1j}(t)} \cdots y_n^{c_{nj}(t)}.
\end{align}
The vector 
\begin{align}
\mathbf{c}_j(t) = 
\begin{pmatrix}
	c_{1j}(t) \\
	\vdots \\
	c_{nj}(t)
\end{pmatrix} 
\end{align}
is called a (permuted) \emph{$\mathbf{c}$-vector}, and the matrix
\begin{equation}
\begin{split}
	C(t) &= ( \mathbf{c}_1(t)  \cdots  \mathbf{c}_n(t)  ) \\
	&= (c_{ij}(t))_{1 \leq i,j \leq n}
\end{split}
\end{equation}
is called a (permuted) \emph{$C$-matrix}.

It was conjectured in~\cite{FZ4} that $\mathbf{c}$-vectors has a special property called sign-coherence.
This was proved in~\cite{DWZ} when the exchange matrix $B$ is skew-symmetric, and proved in~\cite{GHKK} for general case.
We state this property in the form that we will use in this paper.
\begin{theorem}[\cite{GHKK}]\label{theorem: sign coherence}
	For any mutation sequence $\gamma$,
	the vector $\mathbf{c}_j(t)$ is non-zero and
	the entries of $ \mathbf{c}_j(t)$ are either all non-negative or all non-positive for any $j$ and $t$.
\end{theorem}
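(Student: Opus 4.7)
The plan is to set up an induction on the length $T$ of the mutation sequence, reduce the induction step to a structural positivity statement, and then invoke the appropriate categorical or scattering-theoretic machinery to supply that statement.

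I would begin with the straightforward part. The base case $T=0$ gives $C(0)=I_n$, which is sign-coherent. For the inductive step at time $t$, a direct computation in the tropical semifield $\tropsf$ based on \eqref{eq: Y mutation tropical}, under the hypothesis that each $\mathbf{c}_j(t-1)$ has common sign $\varepsilon_j \in \{\pm\}$, yields the compact formula
\begin{align*}
\mathbf{c}_j(t) = \mathbf{c}_j(t-1) + \maxzero{\varepsilon_k B_{kj}(t-1)}\,\mathbf{c}_k(t-1) \qquad (j \neq k = m_t),
\end{align*}
together with $\mathbf{c}_k(t) = -\mathbf{c}_k(t-1)$; the subsequent permutation $\sigma_t$ merely reorders columns and so preserves sign-coherence. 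Column $k$ is thus trivially sign-coherent, and for $j \neq k$ we are adding a non-negative multiple of a vector of sign $\varepsilon_k$ to a vector of sign $\varepsilon_j$. When $\varepsilon_j = \varepsilon_k$ this is immediately sign-coherent, but when $\varepsilon_j \neq \varepsilon_k$ nothing combinatorial prevents the result from having mixed signs. Ruling out this potential cancellation is the essence of the theorem and cannot be done inside the induction alone.

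For the skew-symmetric case I would then follow the categorification route of Derksen--Weyman--Zelevinsky. Choose a generic potential $W$ on the quiver $Q(B)$ and work in the cluster category $\mathcal{C}_{Q,W}$ with canonical cluster-tilting object $T = \bigoplus T_i$. Mutation at $k$ lifts to Iyama--Yoshino mutation of $T$, so iterating along $\gamma$ produces a sequence of cluster-tilting objects $T(t)$, and the signed c-vector $\varepsilon_j \mathbf{c}_j(t)$ can be identified with the dimension vector of a non-zero finite-dimensional module over the Jacobian algebra of the mutated quiver with potential, with $\varepsilon_j$ recording whether the corresponding object is (a shift of) a projective. Since dimension vectors are coordinate-wise non-negative, sign-coherence follows. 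In the general skew-symmetrizable case this categorification is unavailable, and one instead appeals to the cluster scattering diagram $\mathfrak{D}_B \subset M_\mathbb{R}$ of Gross--Hacking--Keel--Kontsevich: each reachable seed corresponds to a chamber of $\mathfrak{D}_B$ whose codimension-one walls have primitive normal vectors equal (up to sign) to the c-vectors, and by construction every wall-function has support in a single open half-space, which forces sign-coherence.

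The main obstacle is thus not the combinatorial step but the structural positivity input that it rests on: in the DWZ approach this is the preservation under QP-mutation of the category of honest modules (so that dimension vectors stay non-negative), while in the GHKK approach it is the consistency of the cluster scattering diagram, established by an intricate order-by-order argument in formal Lie theory. Either input is considerably deeper than the theorem it is being used to prove, but once granted, everything else reduces to bookkeeping around the signed mutation formula displayed above.
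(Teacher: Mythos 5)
This theorem is not proved in the paper at all: it is stated with the citation \cite{GHKK}, and the surrounding text merely records that sign-coherence was established by Derksen--Weyman--Zelevinsky \cite{DWZ} in the skew-symmetric case and by Gross--Hacking--Keel--Kontsevich \cite{GHKK} in general, so there is no paper-internal argument to compare against. Your inductive reduction is correct (your signed update rule is exactly the recursion $C(t)=C(t-1)\,F_{m_t,\varepsilon_t}P_{\sigma_t^{-1}}$ that the paper later quotes from Nakanishi--Zelevinsky), and your deferral of the genuinely hard positivity input to the DWZ categorification, respectively the GHKK scattering-diagram consistency, is in substance the same move the paper makes by citation rather than an independent proof.
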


For each $t=1 , \dots , T$, let $\varepsilon_t$ be the sign of $\mathbf{c}_{m_t} (t-1)$.
This is well-defined by Theorem \ref{theorem: sign coherence}.
It follows from Proposition 1.3 in~\cite{NakanishiZelevinsky} that the $C$-matrices satisfy the following relations:
\begin{align}
	C(t+1) = C(t) F_{m_t,\varepsilon_t} P_{\sigma_t^{-1}}.
\end{align}
Thus we obtain
\begin{align}
	C(T) = F_{m_1,\varepsilon_1} P_{\sigma_1^{-1}} F_{m_2,\varepsilon_2} P_{\sigma_2^{-1}} \cdots F_{m_T,\varepsilon_T} P_{\sigma_T^{-1}}.
\end{align} 

\subsection{The $y \to 0$ limit}
Let $\epsilon$ be a positive real number.
We consider a limit by substituting $\epsilon$ for each $y_i$ and taking the limit $\epsilon \to 0$.
We denote this limit by $y \to 0$.
Let $\boldsymbol{\varepsilon}_{\mathrm{trop}} = (\varepsilon_1 , \dots , \varepsilon_T)$ where $\varepsilon_t$ is the sign of the $\mathbf{c}$-vector $\mathbf{c}_{m_t} (t-1)$.
The special case of Theorem \ref{theorem: det formula of F} for this sign sequence is obtained as the $y \to 0$ limit of Theorem \ref{theorem:det formula}.
\begin{theorem}\label{theorem: y0 limit}
	Let $\gamma$ be a fully mutated mutation sequence.
	For the matrices in \eqref{eq:det formula}, we have
	\begin{align*}
		&\lim_{y \to 0} K_\gamma(y)^T = C(T), \\
		&\lim_{y \to 0} (A_+ Z_-(y) + A_- Z_+(y)) = A_{\boldsymbol{\varepsilon}_{\mathrm{trop}}}.
	\end{align*}
	Thus the limit $y \to 0$ in \eqref{eq:det formula} yields
	\begin{align*}
		\det (I_n - C(T)) = \det A_{\boldsymbol{\varepsilon}_{\mathrm{trop}}}.
	\end{align*}
\end{theorem}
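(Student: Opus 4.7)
The plan is to establish the two componentwise limit identities separately; the determinant identity then follows by taking $y\to 0$ in equation \eqref{eq:det formula} of Theorem \ref{theorem:det formula} and invoking continuity of the determinant.

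I would first handle the limits of $z_{t,+}$ and $z_{t,-}$. Sign-coherence (Theorem \ref{theorem: sign coherence}) says that the entries of $\mathbf{c}_{m_t}(t-1)$ share the common sign $\varepsilon_t$ and the vector is nonzero, so the sum $\sum_i c_{i,m_t}(t-1)$ is strictly positive when $\varepsilon_t = +$ and strictly negative when $\varepsilon_t = -$. A short induction on $t$ along the mutation sequence, using the mutation rule \eqref{eq: Y mutation} together with sign-coherence to handle factors of the form $Y_k^{\pm 1}+1$ (this is exactly the point where the elementary identity $\min(\sum\pm c_i,\,0) = \sum\min(\pm c_i,\,0)$ applies), shows that under the substitution $y_i=\epsilon$ the subtraction-free rational expression $Y_{m_t}(t-1)$ has leading order $\epsilon^{\sum_i c_{i,m_t}(t-1)}$ with a positive leading coefficient. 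Consequently $Y_{m_t}(t-1)\to 0$ when $\varepsilon_t=+$ and $Y_{m_t}(t-1)\to\infty$ when $\varepsilon_t=-$, so $(z_{t,+},z_{t,-})$ tends to $(0,1)$ or $(1,0)$ respectively. Reading columnwise, the $t$-th column of $A_+ Z_-(y)+A_- Z_+(y)$ tends to the $t$-th column of $A_{\varepsilon_t}$, which is the $t$-th column of $A_{\boldsymbol{\varepsilon}_{\mathrm{trop}}}$ by definition.

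For the limit of $K_\gamma(y)^T$ I would exploit the factorization implicit in the proof of Theorem \ref{theorem:det formula}. Combining the chain rule $J_\gamma = P_{\sigma_T} J_{m_T}\cdots P_{\sigma_1} J_{m_1}$ with Proposition \ref{proposition: J plus minus}, the diagonal factors $\mathcal{Y}(t)^{\pm 1}$ telescope between adjacent terms and one obtains
\[
K_\gamma(y) = P_{\sigma_T}\bigl(z_{T,-}F_{m_T,+}^T + z_{T,+}F_{m_T,-}^T\bigr)\cdots P_{\sigma_1}\bigl(z_{1,-}F_{m_1,+}^T + z_{1,+}F_{m_1,-}^T\bigr),
\]
with all $F$'s evaluated at the appropriate $B(t-1)$. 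Taking $y\to 0$ and using the first step yields $\lim_{y\to 0} K_\gamma(y) = P_{\sigma_T} F_{m_T,\varepsilon_T}^T\cdots P_{\sigma_1} F_{m_1,\varepsilon_1}^T$. Transposing and using $P_\sigma^T = P_{\sigma^{-1}}$ converts this into the product $F_{m_1,\varepsilon_1} P_{\sigma_1^{-1}}\cdots F_{m_T,\varepsilon_T} P_{\sigma_T^{-1}}$, which is exactly $C(T)$ by the formula derived just before the theorem statement.

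The main obstacle is the first step: passing from sign-coherence of the tropical $\mathbf{c}$-vector to the actual leading $\epsilon$-behavior of the universal-semifield variable $Y_{m_t}(t-1)$. Sign-coherence is essential here, and Theorem \ref{theorem: sign coherence} is the deep input of the proof. Once the limits of $z_{t,\pm}$ are in hand, the remainder is mechanical manipulation of the matrices already set up in Section \ref{section: jacobian matrix}, together with the explicit formula for $C(T)$.
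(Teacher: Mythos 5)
Your argument is correct, and its overall architecture coincides with the paper's: the same factorization $K_\gamma(y)=P_{\sigma_T}\bigl(z_{T,-}F_{m_T,+}^T+z_{T,+}F_{m_T,-}^T\bigr)\cdots P_{\sigma_1}\bigl(z_{1,-}F_{m_1,+}^T+z_{1,+}F_{m_1,-}^T\bigr)$ obtained from Proposition \ref{proposition: J plus minus} and the chain rule, the same columnwise limit for $A_+Z_-(y)+A_-Z_+(y)$, and the determinant identity obtained by letting $y\to 0$ in \eqref{eq:det formula}. Where you genuinely deviate is the verification that $Y_{m_t}(t-1)$ tends to $0$ or $\infty$ according to $\varepsilon_t$: the paper quotes the separation formula of \cite{FZ4} (Proposition 3.13), writing $Y_j(t)=y_1^{c_{1j}(t)}\cdots y_n^{c_{nj}(t)}\prod_i F_i(t)(y)^{B_{ij}(t)}$, together with Proposition 5.6 of \cite{FZ4} (sign coherence forces the $F$-polynomials to have constant term $1$), so that under $y_i=\epsilon$ the $F$-factors tend to $1$ and only the $\mathbf{c}$-monomial survives; you instead re-derive the needed leading-order asymptotics by induction along the mutation sequence, using sign coherence at each mutation vertex via the identity $\min(\sum_i \pm c_i,0)=\sum_i\min(\pm c_i,0)$. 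You have correctly located the only delicate point: without sign coherence the coordinatewise tropicalization would not control the one-variable substitution $y_i=\epsilon$, since the minimal total degree can exceed the sum of coordinatewise minimal degrees. Your route is more self-contained in that it avoids the $F$-polynomial machinery, but both arguments rest on the same deep input, the sign coherence of \cite{GHKK} (Theorem \ref{theorem: sign coherence}). One presentational remark: the inductive hypothesis must be carried for all components, namely that each $Y_j(t)$ has leading order $\epsilon^{\sum_i c_{ij}(t)}$ with positive leading coefficient, not only for the mutated coordinate $Y_{m_t}(t-1)$, since the mutation at step $t$ feeds the mutated variable into all other components; this is implicit in your sketch but should be stated when written out.
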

\begin{proof}
By Proposition \ref{proposition: J plus minus}, the matrix $K_\gamma(y)$ defined by \eqref{eq: def of K} has the following expression:
\begin{align*}
	K_\gamma(y) = P_{\sigma_T} (z_{T,-} F_{m_T,+}^T +z_{T,+} F_{m_T,-}^T  ) \cdots P_{\sigma_1} (z_{1,-} F_{m_1,+}^T +z_{1,+} F_{m_1,-}^T  ).
\end{align*}

By Proposition 3.13 in~\cite{FZ4}, the rational function $Y_j (t) \in \univsf $ in the transitions \eqref{eq: Y transition} can be written in the form
\begin{align*}
	Y_j(t) =  y_1^{c_{1j}(t)} \cdots y_n^{c_{nj}(t)} \prod_{i=1}^n F_i(t)(y_1 , \dots , y_n)^{B_{ij}(t)}
\end{align*}
for some polynomials $F_1(t) , \dots , F_n(t)$.
Moreover, Proposition 5.6 in ~\cite{FZ4} says that sign coherence of $\mathbf{c}$-vectors implies that these polynomials have constant term $1$.
Thus we have
\begin{align}\label{eq: Y is 0 or infty}
\lim_{y \to 0} Y_{m_t}(t) =
\begin{cases}
	0 & \text{if $\varepsilon_t = +$}, \\
	\infty & \text{if $\varepsilon_t = -$}.
\end{cases}
\end{align}
By the definition of $z_{+,t}$ and $z_{-,t}$ in \eqref{eq: def of z},
we obtain
\begin{equation}\label{eq: z is 1 or 0}
\begin{split}
&\lim_{y \to 0} z_{+,t} =
\begin{cases}
0 & \text{if $\varepsilon_t =+$}, \\
1 & \text{if $\varepsilon_t =-$},
\end{cases}\\
&\lim_{y \to 0} z_{-,t} =
\begin{cases}
1 & \text{if $\varepsilon_t =+$}, \\
0 & \text{if $\varepsilon_t =-$}.
\end{cases}
\end{split}
\end{equation}
This implies that
\begin{align*}
	\lim_{y \to 0} K_\gamma(y)^T = C(T).
\end{align*}
Using \eqref{eq: z is 1 or 0} again, we also have
\begin{align*}
	\lim_{y \to 0} (A_+ Z_-(y) + A_- Z_+(y)) = A_{\boldsymbol{\varepsilon}_{\mathrm{trop}}}.
\end{align*}
\end{proof}

We should be aware that the $y \to 0$ limit and \eqref{eq: z is 1 or 0} is used in the proof of the dilogarithm identities associated with periodic Y-systems by Nakanishi~\cite{Nakanishi}, and originally considered by Chapoton~\cite{Chapoton} in a special case.

\subsection{Non-existence of reddening sequences on once-punctured surfaces}
We assume that $B$ is a skew-symmetric matrix.
A mutation sequence $\gamma=(B,m,\sigma)$ is \emph{reddening} if all entries in the last $C$-matrix $C(T)$ are non-positive.
A mutation sequence is \emph{maximal green} if it is reddening and any sign in
$\boldsymbol{\varepsilon}_{\mathrm{trop}}$ is a plus.
The following is a fundamental property of reddening sequences.
\begin{proposition}\cite[Proposition 2.10]{BDP}
	\label{proposition: reddening}
	Suppose that $\gamma = (B,m,\sigma)$ is a reddening sequence.
	Then there exists a permutation $\nu$ of $\{ 1, \dots, n \}$ such that
	$\gamma'=(B,m,(\sigma_1 ,\dots, \sigma_{T-1},\nu \circ \sigma_T))$
	is a mutation loop and $C(T)=-I_n$.
\end{proposition}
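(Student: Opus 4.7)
The plan is to reduce the proposition to two structural claims about the final $C$- and $B$-matrices of a reddening sequence $\gamma$: first, that $C(T) = -P_{\nu}$ for some permutation $\nu$ of $\{1,\dots,n\}$, and second, that $B(T) = \nu^{-1}(B)$. Once these are in place, taking the required permutation to be this $\nu$ and composing it onto the last entry of $\sigma$ is a short calculation.

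For the first structural claim, I would start from sign coherence (Theorem~\ref{theorem: sign coherence}), which guarantees each column of $C(T)$ is nonzero with entries of a uniform sign. The reddening hypothesis then forces $-C(T)$ to have non-negative integer entries, with at least one positive entry in each column. The formula $C(T) = F_{m_1,\varepsilon_1} P_{\sigma_1^{-1}} \cdots F_{m_T,\varepsilon_T} P_{\sigma_T^{-1}}$, together with $\det F_{k,\varepsilon} = -1$, shows $-C(T) \in GL_n(\mathbb{Z})$. Passing from ``non-negative and invertible'' to ``permutation'' is the nontrivial step: upper-triangular unipotent matrices show that non-negativity plus invertibility alone is insufficient in general. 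I would invoke tropical duality to obtain a parallel sign-coherence statement for the $G$-matrix $G(T)$, which up to signs is the transposed inverse of $C(T)$; combining sign coherence for the columns of both $-C(T)$ and $(-C(T))^{-1}$ forces $-C(T)$ to be a permutation matrix $P_{\nu}$.

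For the second claim, I would exploit the joint mutation dynamics of the pair $(B(t), C(t))$. A direct induction on $t$ using the simultaneous mutation rules produces a compatibility identity relating $B(t)$, $B$, and $C(t)$; substituting $C(T) = -P_{\nu}$ into this identity and using the skew-symmetry of $B$ yields $B(T) = P_{\nu}^{T} B P_{\nu}$, which is the matrix form of $B(T) = \nu^{-1}(B)$ in the notation of Section~\ref{section: y-seed mutation}.

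Finally, for $\gamma'$: appending $\nu$ to $\sigma_T$ multiplies $C(T)$ on the right by $P_{\nu^{-1}}$, giving $-P_{\nu} P_{\nu^{-1}} = -I_n$; and it applies $\nu$ to $B(T)$, giving $\nu(\nu^{-1}(B)) = B$, so $\gamma'$ is a mutation loop with $C'(T) = -I_n$. The main obstacle is the first structural claim — the promotion of a sign-coherent, reddening $C(T)$ to an actual permutation matrix $-C(T)$ — which genuinely requires cluster-algebraic input beyond sign coherence alone, whether via tropical duality as sketched or via a categorification argument of the kind used in the original proof of Br\"ustle-Dupont-P\'erotin.
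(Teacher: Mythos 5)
The paper offers no argument of its own for this proposition: it is quoted directly from \cite[Proposition 2.10]{BDP}, so the only ``proof'' in the paper is the citation, and the original argument of Br\"ustle--Dupont--P\'erotin runs through framed quivers and categorification. Your reconstruction is a genuinely different route, and it checks out in outline: sign coherence (Theorem \ref{theorem: sign coherence}) plus the reddening hypothesis gives $-C(T)\geq 0$ with $\det C(T)=\pm 1$; tropical duality upgrades this to $-C(T)=P_{\nu}$; the compatibility identity $B(T)=C(T)^{T}B\,C(T)$ then gives $B(T)_{ij}=B_{\nu(i)\nu(j)}$; and with the paper's conventions $P_{\sigma}=(\delta_{i\sigma(j)})$, $\sigma(B)_{ij}=B_{\sigma^{-1}(i)\sigma^{-1}(j)}$, appending $\nu$ replaces $C(T)$ by $C(T)P_{\nu}^{-1}=-I_n$ and $B(T)$ by $\nu(B(T))=B$, as required. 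What your route buys is that it stays entirely inside the tropical $C$-matrix formalism the paper already uses (GHKK sign coherence and \cite{NakanishiZelevinsky}), with no representation-theoretic input; what the citation to \cite{BDP} buys the author is, of course, brevity.

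Three points in your sketch should be pinned down to make it a complete proof. First, the duality: in the skew-symmetric case $G(T)=(C(T)^{-1})^{T}$, and the sign coherence you need is for the \emph{rows} of $G(T)$; alternatively, and more directly, use the Nakanishi--Zelevinsky identity saying that $C(T)^{-1}$ is itself the $C$-matrix of the reversed mutation sequence with initial exchange matrix $B(T)$, to which Theorem \ref{theorem: sign coherence} applies verbatim. Second, spell out the promotion step: writing $M=-C(T)\geq 0$ and letting $v_j$ be the $j$-th column of $M^{-1}$, a column $v_j\leq 0$ would give $Mv_j\leq 0\neq e_j$, so sign coherence forces $M^{-1}\geq 0$, and a non-negative integer matrix with non-negative integer inverse is a permutation matrix. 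Third, the identity $B(T)=C(T)^{T}B\,C(T)$ is not in this paper; it does hold for skew-symmetric $B$ (which suffices, since reddening is only defined here in that case) and follows either by the induction you indicate or by combining the known relations $G(T)B(T)=B\,C(T)$ and $G(T)=(C(T)^{-1})^{T}$. None of these is a gap in the sense of a wrong step, but as written they are assertions rather than proofs.
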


Let $S$ be a closed oriented surfaces and $P$ be a one-point set whose element is an point in $S$.
For any ideal triangulation $\Gamma$ of $(S,P)$,
let $B_\Gamma$ be the skew-symmetric matrix defined in Section \ref{section: surface examples}.
The following result (for maximal green sequences) was proved by Ladkani~\cite{ladkani2013cluster}.
We give another proof for this result.
\begin{theorem}\label{theorem: no reddening}
	For any ideal triangulation $\Gamma$ of once-punctured surface $(S,P)$,
	the skew-symmetric matrix $B_\Gamma$ does not have reddening or maximal green sequences.
\end{theorem}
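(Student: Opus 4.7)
The plan is to combine Proposition~\ref{proposition: reddening} with Theorem~\ref{theorem: y0 limit}. Suppose for contradiction that $B_\Gamma$ admits a reddening sequence. By Proposition~\ref{proposition: reddening} we may replace the final permutation to obtain a mutation loop $\gamma=(B_\Gamma,m,\sigma)$ satisfying $C(T)=-I_n$. Such a $\gamma$ is automatically fully mutated: if any equivalence class in $\mathcal{E}$ contained no mutation point $(m_t,t-1)$, the corresponding column of $C(T)$ would be a positive (permuted) unit vector, contradicting $C(T)=-I_n$. Theorem~\ref{theorem: y0 limit} then gives
\begin{align*}
2^n = \det(I_n - C(T)) = \det A_{\boldsymbol{\varepsilon}_{\mathrm{trop}}}.
\end{align*}

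To contradict this, I plan to show $\det A_{\boldsymbol{\varepsilon}_{\mathrm{trop}}}=0$ by exhibiting a nonzero row vector $v$ with $v^T A_+ = v^T A_- = 0$; since each column of $A_{\boldsymbol{\varepsilon}_{\mathrm{trop}}}$ is a column either of $A_+$ or of $A_-$, this forces $v^T A_{\boldsymbol{\varepsilon}_{\mathrm{trop}}}=0$ and the determinant vanishes. The natural choice is the all-ones vector $v=\mathbf{1}\in\mathbb{R}^{\mathcal{E}}$. Using that each square vertex of $\network_\gamma$ is incident to exactly two broken lines, one computes directly from the definitions in Section~\ref{subsection: mutation network}
\begin{align*}
(\mathbf{1}^T A_+)_t = 2 - \sum_{i=1}^n \maxzero{B_{m_t,i}(t-1)}, \qquad (\mathbf{1}^T A_-)_t = 2 - \sum_{i=1}^n \maxzero{-B_{m_t,i}(t-1)}.
\end{align*}
Hence the vanishing of both vectors reduces to a combinatorial claim: at every step $t$, the $m_t$-th row of $B(t-1)$ has positive entries summing to $2$ (the corresponding assertion for negative entries then follows because the row sums of every matrix in the mutation class of $B_\Gamma$ vanish, as each triangle contributes a row-sum-zero summand to $B_\Gamma$ and the vanishing of row sums is equivalent to $\mathbf{1}\in\ker B$).

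The heart of the proof, and the main obstacle, is establishing this identity. At $t=1$ it is immediate: each edge $e$ of the ideal triangulation $\Gamma$ belongs to exactly two triangles, each contributing a single $+1$ to row $e$ of $B_\Gamma$, and the orientation compatibility of two triangles across a shared edge --- combined with the assumption that $\Gamma$ has no self-folded edges --- prevents these $+1$'s from cancelling against any $-1$ contribution; they therefore sum to $2$, either as $+1+1$ in two distinct columns or as a single $+2$. For $t\geq 2$ I would proceed by induction, checking via a case analysis on the signs of $B_{m_t,k}(t-1)$ and $B_{k,i}(t-1)$ in the mutation rule \eqref{eq: B mutation} that the property ``every row has positive entries summing to $2$'' is preserved under a single mutation when the current matrix has it. Equivalently, by the Fomin--Shapiro--Thurston correspondence, every matrix in the mutation class of $B_\Gamma$ is the exchange matrix of some tagged triangulation of the once-punctured surface, and the same row-sum computation still yields $2$. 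Once the claim is secured, $\mathbf{1}^T A_\pm = 0$, so $\det A_{\boldsymbol{\varepsilon}_{\mathrm{trop}}}=0$, contradicting $2^n\neq 0$ and completing the proof.
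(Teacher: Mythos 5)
Your overall strategy is the paper's: combine Proposition~\ref{proposition: reddening} with Theorem~\ref{theorem: y0 limit} to get $\det A_{\boldsymbol{\varepsilon}_{\mathrm{trop}}}=2^n$, and then kill this determinant by noting that every square vertex of $\network_\gamma$ meets exactly two broken lines, two incoming and two outgoing arrows, so that every column of $A_+$ and of $A_-$ sums to $0$. However, your handling of full-mutatedness contains a genuine error. It is not true that the column of $C(T)$ attached to a never-mutated class is a (permuted) positive unit vector: mutation at $k$ changes $\mathbf{c}_j$ for every $j$ with $B_{kj}\neq 0$. For the quiver $1\to 2\to 3$, mutating at $1$ and then at $2$ leaves vertex $3$ unmutated but produces $\mathbf{c}_3=e_1+e_2+e_3$, so your claimed contradiction with $C(T)=-I_n$ does not follow as stated. (One could argue full-mutatedness via $g$-vectors and tropical duality, since a never-mutated index would keep $g$-vector $+e_j$ while $C(T)=-I_n$ forces $G(T)=-I_n$; but the paper simply sidesteps the issue by inserting two consecutive mutations $\mu_k,\mu_k$ at any missing index, which changes neither $Y(T)$, $B(T)$ nor $C(T)$ and makes the loop fully mutated.)

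The second gap is in your primary route to the claim that at every step the $m_t$-th row of $B(t-1)$ has positive entries summing to $2$. An induction using only the mutation rule \eqref{eq: B mutation} cannot succeed, because the property ``every row has positive part equal to $2$'' is not preserved by mutation for general skew-symmetric matrices: the quiver with double arrows $1\Rightarrow 2\Rightarrow 3\Rightarrow 4\Rightarrow 1$ has it, yet after $\mu_1$ the second row has positive part $\maxzero{\widetilde B_{21}}+\maxzero{\widetilde B_{23}}=4$. (For the same reason, your parenthetical that vanishing row sums propagates because $\mathbf{1}\in\ker B$ is not by itself mutation-invariant; its preservation at step $t$ in fact requires the very positive-part-equals-$2$ statement at that step.) The geometric input is therefore indispensable: one must know that every exchange matrix occurring in the sequence is again $B_{\Gamma'}$ for an ideal triangulation of the once-punctured closed surface without self-folded edges --- your fallback via the Fomin--Shapiro--Thurston correspondence, where for a closed surface with a single puncture self-folded triangles and mixed tags cannot occur --- and this is exactly the one-line justification the paper gives. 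With full-mutatedness repaired as above and the row-sum claim justified through the surface description, your computation $(\mathbf{1}^T A_\pm)_t=2-2=0$, hence $\det A_{\boldsymbol{\varepsilon}_{\mathrm{trop}}}=0$, contradicting $2^n$, is precisely the paper's argument.
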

\begin{proof}
	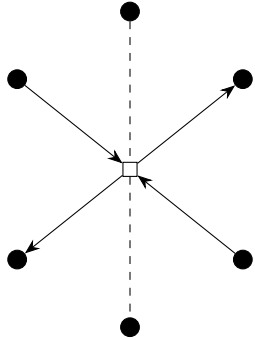
\begin{figure}
		\begin{tikzpicture}
		[baseline={([yshift=-.5ex]current bounding box.center)},scale=1.5,auto=left,black_vertex/.style={circle,draw,fill,scale=0.75},square_vertex/.style={rectangle,scale=0.8,draw}]
		\node (a) at (1,0.8) [black_vertex]{};
		\node (b) at (-1,0.8) [black_vertex]{}; 
		\node (c) at (1,-0.8) [black_vertex]{};
		\node (d) at (-1,-0.8) [black_vertex]{}; 
		\node (h) at (0,1.4) [black_vertex]{};
		\node (hp) at (0,-1.4) [black_vertex]{}; 
		\node (t) at (0,0) [square_vertex]{};
		\draw [dashed] (h) edge (t);
		\draw [dashed] (t) edge (hp);
		\draw[arrows={-Stealth[scale=1.2]}] (c)--(t);
		\draw[arrows={-Stealth[scale=1.2]}] (b)--(t);
		\draw[arrows={-Stealth[scale=1.2]}] (t)--(a);
		\draw[arrows={-Stealth[scale=1.2]}] (t)--(d);
		\end{tikzpicture}
		\caption{A square vertex of a mutation network obtained from a mutation sequence on a skew-symmetric matrix associated with a once-punctured surface.}
		\label{fig: once-puncutred}
	\end{figure}
	It is enough to show that $B_\Gamma$ does not have reddening sequences.
	Suppose that $\gamma$ is a reddening sequence on $B_\Gamma$.
	We can assume that $C(T)=-I_n$ by Proposition \ref{proposition: reddening}.
	We can also assume that $\gamma$ is fully mutated by inserting two consecutive mutations if necessary.
	Then we have
	\begin{align*}
		\det A_{\boldsymbol{\varepsilon}_{\mathrm{trop}}} = 2^n
	\end{align*}
	by Theorem \ref{theorem: y0 limit}.
	
	On the other hand, we have
	\begin{align*}
	\sum_{i=1}^n \maxzero{(B_\Gamma)_{m_t,i}(t-1) } =\sum_{i=1}^n \maxzero{-(B_\Gamma)_{m_t,i}(t-1) }  =2
	\end{align*}
	for all $t=1 , \dots, T$ since any ideal triangulation on once-punctured surfaces does not have self-folded edges.
	Thus we can see that two arrows come in and two arrows come out at any square vertex in the mutation network $\network_\gamma$ (see Figure \ref{fig: once-puncutred}).
	Since the entries in each column of $A_{\pm}$ sum to $0$ in this case,
	we obtain
	\begin{align*}
		\det A_{\boldsymbol{\varepsilon}} = 0
	\end{align*}
	for any sign sequence $\boldsymbol{\varepsilon} \in \{ \pm 1 \}^T$, which is a contradiction.
\end{proof}

\providecommand{\bysame}{\leavevmode\hbox to3em{\hrulefill}\thinspace}
\providecommand{\MR}{\relax\ifhmode\unskip\space\fi MR }
\providecommand{\MRhref}[2]{%
	\href{http://www.ams.org/mathscinet-getitem?mr=#1}{#2}
}
\providecommand{\href}[2]{#2}

\end{document}